\newtheorem{theorem}[equation]{Theorem}
\newtheorem{lemma}[equation]{Lemma}
\theoremstyle{definition}
\theoremstyle{remark}
\newtheorem{remark}[equation]{Remark}
\numberwithin{equation}{section}
\newcommand{ \R }{ \mathbb{R} }
\newcommand{\n}{\nabla}
\renewcommand{\epsilon}{\varepsilon}
\renewcommand{\phi}{\varphi}
\renewcommand{\le}{\leqslant}
\renewcommand{\ge}{\geqslant}
\renewcommand{\leq}{\leqslant}
\renewcommand{\geq}{\geqslant}
\begin{document}

\title[nonlocal equations with variable powers]
{Local H\"older regularity for nonlocal equations with variable powers}


\author{Jihoon Ok}
\address{Department of Mathematics, Sogang University, Seoul 04107, Republic of Korea}
\email{jihoonok@sogang.ac.kr}

\thanks{The author was supported by the National Research Foundation of Korea funded by the
Korean Government (NRF-2017R1C1B2010328)}

\subjclass[2010]{Primary: 35R11, 35B65, Secondary: 35D30, 47G20}



\keywords{quasilinear nonlocal operator, variable exponent, variable order, H\"older regularity, fractional Sobolev space}

\begin{abstract}
We prove the local boundedness and the local H\"older continuity of weak solutions to nonlocal equations with variable orders and exponents under sharp assumptions. 
\end{abstract}

\maketitle




\section{\bf Introduction}
We study regularity theory for weak solutions to the following integro-differential equations: 
\begin{equation}\label{mainPDE}
\mathcal L_K u(x) = \mathrm{P.V.} \int_{\R^n} |u(x)-u(y)|^{p(x,y)-2} (u(x)-u(y)) K(x,y)\, dy =0  \quad  \text{in } \ \Omega,
\end{equation}
where $\Omega\subset \R^n$ ($n\geq2$) is open and bounded, and $K: \R^n\times \R^n\to \R$ with $K(x,y)=K(y,x)$  is a suitable kernel with variable order $s(\cdot,\cdot)$ and the variable exponent $p(\cdot,\cdot)$. We note that when $p(x,y)\equiv p$, $s(x,y)\equiv s$  and $K(x,y)=\frac{1}{|x-y|^{n+sp}}$, \eqref{mainPDE} is the ($s$-)fractional $p$-Laplace equation $(-\Delta)^s_pu=0$, and moreover if $p=2$, it is the ($s$-)fractional Laplace equation $(-\Delta)^su=0$.   In particular, we mainly prove the local H\"older continuity of weak solutions to \eqref{mainPDE}, under essentially sharp regularity conditions on $s(\cdot,\cdot)$, $p(\cdot,\cdot)$ and $K(\cdot,\cdot)$. We will present our results with the definition of weak solution in Section~\ref{sec1.1} below. 

In the past two decades, there have been tremendous  amount of researches on nonlocal equations of the type \eqref{mainPDE} and relevant variational problems. We refer to for instance  survey papers \cite{DPV1,Pal1} and monographs \cite{MRS1,BV1} for the history,  development and applications of nonlocal problems. Regarding the regularity theory, Caffarelli and Silvestre \cite{CS1} proved Harnack's inequality for the fractional Laplace equation by using an extension argument.  After this pioneering work, regularity theory for nonlocal equations of the fractional Laplacian type has rapidly  developed. We refer to  for instance \cite{BK1,BK2,CCV1,CS2,CS3,Kass1,Kass3,KMS1,RS1,Sil1}  and related references. In particular,     Caffarelli, Chan and Vasseur   applied De Giorgi's approach to nonlocal equations in \cite{CCV1}. 
More generally,  for the fractional $p$-Laplace equation, Di Castro, Kuusi and Palatucci \cite{DKP1,DKP2} have proved Harnack's inequality and the H\"older continuity of weak solutions by using  De Giorgi's approach. Especially, they introduced the so called nonlocal tail (see Section~\ref{sec2}), and obtained regularity estimates involving this. We refer to \cite{CK1,Co1,DZZ1,GaKi1,KKL1,KKP1,KKP2,KKP3,KKP4,KMS2,Lin1,No1,No2} for researches for nonlocal equations of the fractional $p$-Laplacian type. 

 On the other hand, there have also been research activities on regularity theory for nonlocal equations with nonstandard order and exponent. Bass and Kassmann \cite{BK1,BK2} and Silvestre \cite{Sil1} proved the H\"older continuity and Harnack's inequality for bounded solutions to nonlocal linear equations with kernels whose  prototypes are  $|x-y|^{-s(x)2}$ or $\phi(|x-y|)^{-1}$. We also refer to related researches \cite{Bae1,BaeK1,KKLL1}. In addition, recently De Filippis and Palatucci \cite{DP1} considered nonlocal equations of the double phase type, and proved H\"older continuity for bounded solutions.

The model problem we have in mind is the $s(\cdot,\cdot)$-fractional $p(\cdot,\cdot)$-Laplace equation:
\begin{equation}\label{fractionpx}
(-\Delta)^{s(\cdot,\cdot)}_{p(\cdot,\cdot)}u = 0 \quad \text{in }\ \Omega,
\end{equation}
that is, $K(x,y)=\frac{1}{|x-y|^{n+s(x,y)p(x,y)}}$ in \eqref{mainPDE}, where $s(\cdot,\cdot)$ and $p(\cdot,\cdot)$ are variable functions satisfying \eqref{s1s2} and \eqref{gamma}, respectively.  In this paper, we call $s(\cdot,\cdot)$ the variable order, $p(\cdot,\cdot)$  the variable exponent, and both $s(\cdot,\cdot)$ and $p(\cdot,\cdot)$ the variable powers. Note that the nonlocal equation \eqref{fractionpx} is the Euler-Lagrange equations of  the minimizing problem of the energy functional
\[
v\ \ \mapsto\ \ \iint_{\mathcal C_\Omega}\frac{1}{p(x,y)} \frac{|v(x)-v(y)|^{p(x,y)}}{|x-y|^{n+s(x,y)p(x,y)}}\,dydx,
\]
where 
\begin{equation}\label{mcdef} 
\mathcal C_\Omega := (\R^n\times \R^n)\, \setminus \, ((\R^n\setminus \Omega)\times (\R^n\setminus \Omega)),
\end{equation}
and its admissible set  is naturally linked to fractional Sobolev space $W^{s(\cdot,\cdot),p(\cdot,\cdot)}$. In recent years, there have been reported a lot of papers studying nonlocal equations of the type  \eqref{fractionpx} and fractional Sobolev spaces with variable powers, see for instance \cite{Bah1,BahR1,CGA1,HK1,KRV1,LKKL1,LH1,KN1,RAA1,XZY1}. Note that we can also find their applications in some of the preceding references.

The local problem corresponding  to \eqref{fractionpx} is the $p(x)$-Laplace equation
\[
\Delta_{p(\cdot)}u:=\mathrm{div} \left(|Du|^{p(x)-2}Du\right)=0,
\] 
that is a typical model of problems with nonstandard growth, and the one of elliptic equations extensively studied over last two decades. We refer to \cite{AM1,AM2,BO1,BOR1,CM1,Fan1,FZ1} for regularity results of problems modeled by the $p(x)$-Laplace equation. In particular, we know  from \cite{FZ1} that the weak solution to the $p(x)$-Laplace equation is locally bounded if $p(\cdot)$ is continuous, and locally H\"older continuous if $p(\cdot)$ is so called log-H\"older continuous, that is,
\[
\sup_{0<r<\frac{1}{2}}\omega_p(r)\ln(1/r)<\infty, 
\] 
where $\omega_p$ is the modulus of continuity of $p(\cdot)$. Note that the above regularity conditions on $p(\cdot)$ are sharp.

On the other hand, for nonlocal equations with variable powers $s(\cdot,\cdot)$ and $p(\cdot,\cdot)$, despite active research on these problems, we cannot find any result on regularity theory, especially the H\"older continuity or Harnack's inequality. In fact, the nonlocal nature makes difficult to prove regularity estimates for nonlocal equations with variable powers. More precisely, even if we investigate the local regularity of weak solutions, we have to estimate integrals over the whole space $\R^n$, that are called the nonlocal tails. By the way, the differences between the supremum and the infimum of the variable powers may be large, hence we can not approximate the nonlocal tails to integrals with constant powers. This requires much more delicate analysis that is not used in local problems with variable exponent.
 
In this paper, we prove the local boundedness and the H\"older continuity for the weak solutions to nonlocal equations modeled by \eqref{fractionpx} in Theorems \ref{thm1}  and \ref{mainthm}, respectively.  These are the natural nonlocal counter parts of the regularity results for the $p(x)$-Laplace equations mentioned above and, in our best knowledge, the first regularity results for  nonlocal equations with not only the variable exponent $p(\cdot,\cdot)$ but also the variable order $s(\cdot,\cdot)$ depending on two space variables. We emphasize that weak solutions are not assumed to be  bounded  in $\R^n$. Instead, we assume that weak solutions belong to the so called tail space with variable power  $L^\infty(\Omega;L^{p(\cdot,\cdot)-1}_{s(\cdot,\cdot)p(\cdot,\cdot)}(\R^n))$, see Section~\ref{sec2}, and prove the desired regularities. Recently, Chaker and Kim \cite{CK2} proved the local H\"older continuity weak solutions to nonlocal problems with variable exponent and constant order where the condition of variable exponent is stronger than the one in this paper. 

We prove the theorems by using De Giorgi's approach, in particular, in \cite{DKP2}. Hence we first obtain a Caccioppoli type estimate and a logarithmic estimate. Then using the Caccioppoli type estimate and the De Giorgi iteration, we prove the local boundedness of weak solution. Finally, using the above results, we prove an oscillation estimate in Lemma~\ref{lemholder}. Technically, we do not use any embedding property of fractional Sobolev spaces with variable powers even in the proof of the existence of weak solution. Instead, we use well-known embedding properties for the fractional Sobolev spaces with constant powers. It is possible by decreasing the order of differentiability, see Lemma~\ref{lempxpq}.  Moreover, it is unclear that  the multiplication of a weak solution and a cut-off function can be directly used as a test function in the weak form of \eqref{mainPDE}, that is usually the first step of the proofs of Caccioppoli type estimates (see the beginning of Section~\ref{sec4}). This problem occurs in the variable exponent case. Nevertheless, we obtain a desired Caccioppoli estimate by using an approximation argument.

Now, we state the main results in this paper.

\subsection{Main results}\label{sec1.1}
  We start with  defining the variable powers $s(\cdot,\cdot)$ and $p(\cdot,\cdot)$, and the kernel $K(\cdot,\cdot)$. Note that a two variable function $f=f(x,y)$ is said to be \textit{symmetric} if $f(x,y)=f(y,x)$ for all $x$ and $y$. Let $s,p: \R^n \times \R^n \to \R$ be symmetric and satisfy that 
\begin{equation}\label{s1s2}
0< s^{-}:= \inf_{(x,y)\in \R^n \times \R^n} s(x,y) \le \sup_{(x,y)\in \R^n \times \R^n} s(x,y) =:s^{+} < 1,
\end{equation}
and
\begin{equation}\label{gamma}
1< p^{-}:= \inf_{(x,y)\in \R^n \times \R^n} p(x,y) \le \sup_{(x,y)\in \R^n \times \R^n} p(x,y) =:p^{+} < \infty,
\end{equation}
and let $K: \R^n\times \R^n \to \R$ be measurable and symmetric, and satisfy that
\begin{equation}\label{Lambda}
 \frac{\Lambda^{-1}}{ |x-y|^{n+s(x,y)p(x,y)}}  \le K(x,y) \le  \frac{\Lambda }{|x-y|^{n+s(x,y)p(x,y)}}  \quad \text{for a.e. }\ (x,y)\in \R^n\times \R^n,
\end{equation}
for some $ \Lambda>1$.

We next state  the definition used in this paper of weak solution to \eqref{mainPDE} with \eqref{s1s2}--\eqref{Lambda}.  For relevant function spaces $\mathbb W^{s(\cdot,\cdot),p(\cdot,\cdot)}(\Omega)$ and $L^\infty(\Omega;L^{p(\cdot,\cdot)-1}_{s(\cdot,\cdot)p(\cdot,\cdot)}(\R^n))$, we will introduce in the next section. We say that $u\in \mathbb W^{s(\cdot,\cdot),p(\cdot,\cdot)}(\Omega)$ is a weak solution to \eqref{mainPDE} if 
\begin{equation}\label{weakform}
 \iint_{\mathcal C_\Omega} |u(x)-u(y)|^{p(x,y)-2}(u(x)-u(y)) (\phi(x)-\phi(y))K(x,y)\,dy\, dx =0
\end{equation}
for every $\phi\in \mathbb W^{s(\cdot,\cdot),p(\cdot,\cdot)}(\Omega)$ with $\phi=0$ a.e. in $\R^n\setminus\Omega$, where $\mathcal C_\Omega$ is defined in \eqref{mcdef}. In addition, we say $u\in \mathbb W^{s(\cdot,\cdot),p(\cdot,\cdot)}(\Omega)$ is a weak subsolution(resp. supersolution) if \eqref{weakform} with replacing ``$=$" by ``$\le$(resp. $\ge$)" holds for every $\phi\in \mathbb W^{s(\cdot,\cdot),p(\cdot,\cdot)}(\Omega)$ with $\phi\ge0$ a.e. in $\R^n$ and $\phi=0$ a.e. in $\R^n\setminus\Omega$. We will discuss the existence and the uniqueness of the weak solution in Section~\ref{sec3}.

Now we  introduce  regularity assumption on $s(\cdot,\cdot)$ and $p(\cdot,\cdot)$. Set
\[
\left\{\begin{aligned}
&\omega_{s}(r):=\sup_{B_r\subset \Omega} \sup_{x_1,x_2,y_1,y_2\in \R^n}|s(x_1,y_1)-s(x_2,y_2)|,\\
&\omega_{p}(r):=\sup_{B_r\subset \Omega} \sup_{x_1,x_2,y_1,y_2\in B_r}|p(x_1,y_1)-p(x_2,y_2)|,
\end{aligned}\right.
 \quad \ r\in(0,1/2).
\]
This means  $\omega_s(\cdot)$, or $\omega_p(\cdot)$, is the oscillation of $s(\cdot,\cdot)$, or $p(\cdot,\cdot)$, near the diagonal region $D:=\{(x,x): x\in \Omega \}$.  Hence, for instance, $\lim_{r\to 0}\omega_p(r)=0$ if and only if $p(\cdot,\cdot)$ is continuous on $D$ uniformly.
We then consider the following logarithmic continuity condition:
\begin{equation}\label{logholder}
 \sup_{0<r\le 1/2 } (\omega_{p}(r) +\omega_{s}(r))\ln \left(\frac{1}{r}\right) \le c_{LH} \quad \text{for some }\ c_{LH}>0.
\end{equation}
Note that this assumption means  $\omega_s(\cdot)$ and $\omega_p(\cdot)$ are so called log-H\"older continuous on $D$.

 The first main result is the local boundedness of the weak solutions to \eqref{mainPDE}.

\begin{theorem}\label{thm1} Let $s,p,K:\R^n\times \R^n \to \R$ be symmetric and satisfy \eqref{s1s2}, \eqref{gamma} and \eqref{Lambda}, respectively. Suppose that for $p(\cdot,\cdot)$, $\lim_{r\to0}\omega_p(r)=0$.   If $u\in \mathbb W^{s(\cdot,\cdot),p(\cdot,\cdot)}(\Omega)\cap L^\infty(\Omega;L^{p(\cdot,\cdot)-1}_{s(\cdot,\cdot)p(\cdot,\cdot)}(\R^n))$ is a weak solution to \eqref{mainPDE}, then $u\in L^\infty_{\mathrm{loc}}(\Omega)$.
\end{theorem}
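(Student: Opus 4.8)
The plan is to run a De Giorgi iteration localized on a sequence of shrinking balls, following the strategy of \cite{DKP1,DKP2} but keeping careful track of the variable powers. Fix a ball $B_{2R}=B_{2R}(x_0)\Subset\Omega$ with $R$ small enough that the oscillations $\omega_p(2R)$ and $\omega_s(2R)$ are under control; in particular, since $\lim_{r\to0}\omega_p(r)=0$, we may assume that on $B_{2R}$ the exponent $p(\cdot,\cdot)$ lies in a narrow range $[p_0^-,p_0^+]$ with $p_0^+-p_0^-$ as small as we like, and similarly fix reference values $s_0^-,s_0^+$ for the order. Set $k_j = (1-2^{-j})k$ for a level $k>0$ to be chosen, $\rho_j = R(1+2^{-j})$, and let $w_j = (u-k_j)_+$. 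The first step is to apply the Caccioppoli-type estimate (the one announced in Section~\ref{sec4}, valid for weak subsolutions, noting that a weak solution is in particular a subsolution) to $w_j$ on the pair of balls $B_{\rho_{j+1}}\subset B_{\rho_j}$ with a cut-off $\phi_j$ supported in $B_{\rho_j}$ equal to $1$ on $B_{\rho_{j+1}}$ and satisfying $|\n\phi_j|\lesssim 2^j/R$. This produces a bound on the Gagliardo-type energy of $w_j\phi_j$ in terms of $(i)$ a local term $\int_{B_{\rho_j}} w_j^{p(x,y)}|\phi_j|^{p(x,y)}\,\cdots$ dominated by a power of $2^j/R$ times $\int_{B_{\rho_j}}w_j^{p(\cdot)}$, and $(ii)$ a tail contribution controlled, via the hypothesis $u\in L^\infty(\Omega;L^{p(\cdot,\cdot)-1}_{s(\cdot,\cdot)p(\cdot,\cdot)}(\R^n))$, by the tail of $u$ plus $k$ to suitable powers.

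The second step converts energy into a gain of integrability. Here is where one cannot use fractional Sobolev embedding with variable powers directly; instead, as the introduction indicates (cf.\ Lemma~\ref{lempxpq}), I would sacrifice a bit of the differentiability order, replacing $s(\cdot,\cdot)$ by a constant $\sigma<s_0^-$ and $p(\cdot,\cdot)$ by a constant $q\le p_0^-$ on $B_{\rho_j}$, so that $W^{s(\cdot,\cdot),p(\cdot,\cdot)}(B_{\rho_j}) \hookrightarrow W^{\sigma,q}(B_{\rho_j})$ and then invoke the classical fractional Sobolev inequality $\|v\|_{L^{q^*}}\lesssim [v]_{W^{\sigma,q}}$ with $q^*=\tfrac{nq}{n-\sigma q}$ (choosing $\sigma q<n$, which costs nothing since we may take $\sigma$ small). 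Combining with the Caccioppoli bound and Hölder's inequality in the measure-theoretic sense, one obtains a recursive inequality of the form
\begin{equation*}
A_{j+1}\ \le\ C\,b^{j}\,A_j^{1+\delta}
\end{equation*}
for a suitable normalized excess $A_j$ (an average of $w_j$ to some power over $B_{\rho_j}$, divided by an appropriate power of $k$), with constants $C,b>1$ and $\delta>0$ depending only on $n,p_0^\pm,s_0^\pm,\Lambda$. By the standard fast-geometric-convergence lemma, $A_j\to0$ provided $A_0\le C^{-1/\delta}b^{-1/\delta^2}$, and this can be guaranteed by choosing the level $k$ large, comparably to $\bigl(\fint_{B_{2R}}u_+^{p_0^+}\bigr)^{1/p_0^-}$ plus the relevant tail quantity raised to the right power. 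Then $A_\infty=0$ yields $u\le k$ a.e.\ in $B_R$; applying the same argument to $-u$ gives the lower bound, hence $u\in L^\infty(B_R)$, and covering $\Omega$ by such balls gives $u\in L^\infty_{\mathrm{loc}}(\Omega)$.

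The main obstacle, and the place that requires the most care, is the bookkeeping with the variable powers throughout the iteration: the exponents appearing in the local term, in the tail term, and in the Sobolev step are all different functions of $x,y$, and one must repeatedly pass between $w^{p(x,y)}$, $w^{p_0^-}$ and $w^{p_0^+}$ using that $w\le k$ on the relevant set together with smallness of $p_0^+-p_0^-$, absorbing factors like $k^{p_0^+-p_0^-}$ and $R^{-(s_0^+-s_0^-)p_0^+}$. These factors are kept harmless precisely by the continuity of $p$ (so that $p_0^+-p_0^-$ can be made small on small balls) — note that only continuity, not log-Hölder continuity, of $p$ is used here, consistent with the statement. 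A secondary technical point is that, as flagged before Section~\ref{sec4}, multiplying $u$ by a cut-off is not obviously an admissible test function in the variable-exponent setting, so the Caccioppoli estimate itself must already have been established by an approximation argument; I would simply cite it. The tail terms are, by contrast, routine once one unwinds the definition of $L^\infty(\Omega;L^{p(\cdot,\cdot)-1}_{s(\cdot,\cdot)p(\cdot,\cdot)}(\R^n))$, since the geometry ($x$ in a small ball, $y$ far away) keeps $|x-y|$ bounded below.
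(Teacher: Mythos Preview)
Your outline is essentially the paper's proof: De Giorgi iteration on shrinking balls, the Caccioppoli estimate (Lemma~\ref{lemcaccio}), a drop to constant powers via Lemma~\ref{lempxpq}, the constant-exponent fractional Sobolev--Poincar\'e inequality (Lemma~\ref{lemSPineq}), and then the geometric-convergence lemma (Lemma~\ref{lemseq}). Two points, however, deserve sharpening.

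First, no control of $\omega_s$ is needed here, and no factor of the form $R^{-(s_0^+-s_0^-)p_0^+}$ actually arises. The paper's argument imposes only \eqref{Rselect2} and \eqref{Rselect3}, both of which are conditions on $\omega_p$ alone; the order $s(\cdot,\cdot)$ may be an arbitrary measurable function with values in $[s^-,s^+]$ (cf.\ the remark after Theorem~\ref{mainthm}). The $r$-powers that do occur in the final bound, e.g.\ $r^{\sigma p_1-s_2p_2}$, depend only on the global range $s^\pm$ and the fixed auxiliary parameter $\sigma$, and are simply finite for each fixed $r$; they need not be ``absorbed''.

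Second, and more substantively, the phrase ``choosing $\sigma q<n$, which costs nothing since we may take $\sigma$ small'' conceals the only place where continuity of $p$ really bites. If $\sigma$ is taken too small the Sobolev gain collapses: one needs $q^*_\sigma>p_0^+$ so that the H\"older step produces a strictly positive power of $|\{w_j>0\}|/|B|$ and hence a genuine $\delta>0$ in the recursion $A_{j+1}\le Cb^jA_j^{1+\delta}$. The paper resolves this by fixing $\sigma=\max\{2s^--1,\tfrac34 s^-\}\in[\tfrac34 s^-,s^-)$ once and for all, and then using the smallness condition \eqref{Rselect3} on $\omega_p(r)$ to guarantee $p_2<(p_1)^*_\sigma$. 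That inequality is precisely the mechanism through which $\lim_{r\to0}\omega_p(r)=0$ enters the boundedness proof; once it is in place, your recursion and conclusion go through exactly as written.
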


 The second main result is the H\"older continuity of the weak solutions to \eqref{mainPDE}.

\begin{theorem}\label{mainthm} Let $s,p,K:\R^n\times \R^n \to \R$ be symmetric and satisfy \eqref{s1s2}, \eqref{gamma} and \eqref{Lambda}, respectively.  Suppose that $s(\cdot,\cdot)$ and $p(\cdot,\cdot)$ satisfy \eqref{logholder}. 
If $u\in \mathbb W^{s(\cdot,\cdot),p(\cdot,\cdot)}(\Omega)\cap L^\infty(\Omega;L^{p(\cdot,\cdot)-1}_{s(\cdot,\cdot)p(\cdot,\cdot)}(\R^n))$ is a weak solution to \eqref{mainPDE}, then $u\in C^{0,\alpha}_{\mathrm{loc}}(\Omega)$ for some $\alpha\in (0,1)$ depending on $n$, $s^\pm$, $p^\pm$, $\Lambda$ and $c_{LH}$.
\end{theorem}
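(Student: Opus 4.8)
The plan is to follow De Giorgi's iteration scheme as in \cite{DKP2}, but carefully handling the nonlocal tails when the exponent oscillates. First I would fix a ball $B_{2r}\Subset\Omega$ small enough that on $B_{2r}$ the oscillations $\omega_p(2r)$, $\omega_s(2r)$ are controlled by \eqref{logholder}; set $p_1:=\inf_{B_{2r}\times B_{2r}}p$, $p_2:=\sup_{B_{2r}\times B_{2r}}p$ and $s_1,s_2$ similarly, so that $p_2-p_1\le \omega_p(2r)$ and likewise for $s$. By Theorem~\ref{thm1} we already know $u\in L^\infty_{\mathrm{loc}}$, so on $B_{2r}$ we have a finite oscillation $\omega(2r):=\operatorname*{ess\,osc}_{B_{2r}}u$; the goal is to prove a decay estimate $\omega(\theta r)\le (1-\lambda)\,\omega(2r)+C r^{\beta}$ for fixed $\theta,\lambda\in(0,1)$ and some $\beta>0$, together with control of the tail term, which by a standard iteration lemma yields $u\in C^{0,\alpha}_{\mathrm{loc}}$.

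The core is an oscillation-improvement lemma (referred to in the excerpt as Lemma~\ref{lemholder}). Following \cite{DKP2}, one argues by a measure-theoretic alternative: either $u$ is close to its supremum on at least half of $B_r$, or it is close to its infimum on at least half of $B_r$; WLOG the latter, so $w:=u-\inf_{B_{2r}}u\ge 0$ is "mostly large" is replaced by its symmetric version being mostly small. Introducing the truncations $w_j:=(w-(1+2^{-j})\delta\omega)_{-}$ (or the analogous increasing family), one applies the Caccioppoli-type estimate (the Caccioppoli lemma established earlier via the approximation argument) on the shrinking balls $B_{r_j}$, $r_j=(1+2^{-j})r$, to the functions $w_j$. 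The Caccioppoli inequality produces a Gagliardo-type seminorm of $w_j$ on $B_{r_j}$ controlled by (i) a local term $\sim \sup w_j$ times the Lebesgue measure of the superlevel set, and (ii) a tail term. For the tail one splits $\int_{\R^n\setminus B_{r_j}}$ into the annulus near $B_{r_j}$ — where $w\le \omega$ and the oscillation smallness is used — and the far region, where membership of $u$ in $L^\infty(\Omega;L^{p(\cdot,\cdot)-1}_{s(\cdot,\cdot)p(\cdot,\cdot)}(\R^n))$ bounds $\mathrm{Tail}$. Then one converts the seminorm bound into an $L^{p}$-gain via the fractional Sobolev embedding with \emph{constant} power: here one uses Lemma~\ref{lempxpq}, lowering the differentiability order from $s(\cdot,\cdot)$ to some fixed $\sigma<s^{-}$ and the exponent to a fixed $q$ with $q<p^{-}$, so that the variable-power seminorm dominates a genuine $W^{\sigma,q}$ seminorm and the classical embedding $W^{\sigma,q}\hookrightarrow L^{\sigma q^{*}/?}$ applies. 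This yields a nonlinear recursion $Y_{j+1}\le C\,b^{j}\,Y_j^{1+\eta}$ for the normalized quantities $Y_j:=\fint_{B_{r_j}}(w_j/\omega)^{q}$; the fast-geometric-convergence lemma then gives $Y_\infty=0$ provided $Y_0$ is below an explicit threshold — which is arranged by choosing $\delta$ small and using the measure alternative. Iterating the alternative over dyadic radii produces the oscillation decay.

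The main obstacle — and the point where the variable-power structure really bites — is the tail estimate inside the iteration, specifically controlling $\mathrm{Tail}(w_j;x_0,r_j)$ in terms of $\omega$ and a remainder of size $r^{\beta}$. Because $p(x,y)$ can vary by a definite amount across the support of the kernel and over the annuli, one cannot simply pull a fixed power out of $\int_{\R^n\setminus B_r}|w(y)|^{p(x,y)-1}|x-y|^{-n-s(x,y)p(x,y)}\,dy$; the quantity $|w(y)/\omega|^{p(x,y)-1}$ must be split according to whether $w(y)/\omega$ is $\le 1$ or $>1$, and on the far region one must absorb $|x-y|^{-(n+s(x,y)p(x,y))}$ into $|x-y|^{-(n+s_1 p_1)}$ at the cost of a factor which is a power of $|x-y|$ with exponent $\lesssim \omega_p(2r)+\omega_s(2r)$, and this is exactly where the log-Hölder condition \eqref{logholder} enters: $r^{-(\omega_p(2r)+\omega_s(2r))}\le e^{c_{LH}}$ stays bounded. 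One also has to track, when comparing $|w|^{p(x,y)-1}$ with $|w|^{p_1-1}$ where $|w|\le\omega\le$ (something like) $1$, a factor $\omega^{-(p_2-p_1)}\le r^{-C(\omega_p+\omega_s)}$ — again bounded by log-Hölder, provided $\omega\gtrsim r$, which is precisely the "else" branch of the iteration lemma (if $\omega\lesssim r^{\beta}$ we are already done). A secondary technical nuisance is that the Caccioppoli inequality and the logarithmic estimate must be applied with the correct normalization so that all constants depend only on $n,s^{\pm},p^{\pm},\Lambda,c_{LH}$ and not on $r$; this forces one to measure everything relative to $\omega$ and to keep the radii in the dyadic window where the frozen exponents $p_1,p_2,s_1,s_2$ are comparable. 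Once these uniform tail and normalization bounds are in place, the De Giorgi machinery runs exactly as in the constant-exponent case and delivers $\alpha=\alpha(n,s^{\pm},p^{\pm},\Lambda,c_{LH})\in(0,1)$.
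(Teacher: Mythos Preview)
Your overall De Giorgi strategy matches the paper's, but there is a structural gap in how you reach the smallness of $Y_0$ (equivalently $A_0$) needed to launch the level-set recursion. The measure alternative only gives $|\{u_j\ge\theta(r_j)/2\}\cap 2B_{j+1}|\ge\tfrac12|2B_{j+1}|$; it does \emph{not} follow that ``choosing $\delta$ small'' makes $|\{u_j<2\delta\theta\}|$ small --- the bad set can still occupy half the ball regardless of $\delta$. The paper bridges this with the logarithmic estimate (Lemmas~\ref{lemlog} and~\ref{corlog}): testing the equation with $(u_j+d)^{1-p_2}\eta^{p_2}$ gives a BMO-type bound on $v=\min\big\{[\ln((\theta/2+d)/(u_j+d))]_+,\,k\big\}$, and combined with the alternative this yields
\[
\frac{|2B_{j+1}\cap\{u_j\le 2d_j\}|}{|2B_{j+1}|}\le \frac{c_0}{\ln(1/\sigma)},
\]
so that choosing the scale ratio $\sigma$ small forces $A_0$ below the explicit threshold (see \eqref{densitytildeB}--\eqref{A0ineq}). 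You mention the logarithmic estimate only as a normalization nuisance, but it is the key device that converts density-$\tfrac12$ information into the arbitrarily small initial datum the De Giorgi recursion requires.

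A second, related gap concerns the outer (scale) iteration. Your one-step form $\omega(\theta r)\le(1-\lambda)\omega(2r)+Cr^\beta$ is not how the tail is controlled here. At scale $r_{j}$ the tail of $u_j$ over $\R^n\setminus B_j$ must be decomposed over all the intermediate annuli $B_{i-1}\setminus B_i$, $i=1,\dots,j$, and on each annulus one invokes the \emph{full} induction hypothesis $\theta(r_i)\le K_i=\sigma^{\alpha i}K_0$; this produces a convergent geometric sum precisely because $\alpha$ is chosen with $s_0p_0-\alpha(p_0-1)>0$ (Step~2 of Lemma~\ref{lemholder}, in particular \eqref{tailj} and \eqref{jtail}). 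The far piece $\R^n\setminus B_0$ is absorbed once and for all into $K_0$ via the variable-power tail $T$. Bounding by ``$w\le\omega$ on the near annulus'' alone is insufficient --- that only uses the outermost oscillation, and the tail then fails to decay with $j$. Accordingly the paper sets up the induction directly as $\theta(r_j)\le\sigma^{\alpha j}K_0$ with $K_0$ containing the initial tail, rather than as a one-step reduction plus standard iteration lemma.
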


There are remarks for the above theorems.

\begin{remark}
We consider the weak solutions belonging to $L^\infty(\Omega;L^{p(\cdot,\cdot)-1}_{s(\cdot,\cdot)p(\cdot,\cdot)}(\R^n))$. This allows to handle  the nonlocal tail given by \eqref{taildef} that is an essential  factor in the analysis of local regularity for nonlocal equations. In Remark~\ref{rmkL}   below, we introduce some examples of functions in  $L^\infty(\Omega;L^{p(\cdot,\cdot)-1}_{s(\cdot,\cdot)p(\cdot,\cdot)}(\R^n))$.
\end{remark}

\begin{remark}
We emphasize that, except the basic assumptions \eqref{s1s2} and \eqref{gamma}, we do not assume any  regularity condition on $s(\cdot,\cdot)$ and $p(\cdot,\cdot)$ in  $(\R^n\times \R^n) \setminus D$. 

For the local boundedness, Theorem~\ref{thm1}, we only assume that the variable exponent $p(\cdot,\cdot)$ is continuous on $D$ uniformly (this can be replaced by a smallness condition on the oscillation of $p(\cdot,\cdot)$, see Remark~\ref{rmksmall}), hence the variable order $s(\cdot,\cdot)$ could be any measurable function satisfying \eqref{s1s2}. On the other hand, for the local H\"older continuity, Theorem~\ref{mainthm}, we assume that   $s(\cdot,\cdot)$ and $p(\cdot,\cdot)$ are log-H\"older continuous on $D$.
\end{remark}

The remaining part of the paper is organized as follows. In Section~\ref{sec2}, we introduce notation and basic tools used in this paper. In Section~\ref{sec3}, we discuss on the existence of weak solution to \eqref{mainPDE}. In Section~\ref{sec4}, we obtain Caccioppoli type and logarithmic estismates. Finally, in Sections~\ref{sec5} and \ref{sec6}, we prove Theorems~\ref{thm1} and \ref{mainthm}, respectively.

\section{\bf Preliminaries}\label{sec2}

\subsection{Notation and function spaces}
For $q\ge 1$ and $t\in (0,1]$ with $tq<n$, $q^*_t:=\frac{nq}{n-tq}$. Let $\Omega\subset\R^n$ be open and bounded, and  let $B_r(x_0)$ be a standard open ball in $\R^n$ centered at $x_0\in \R^n$ with radius $r>0$. If the center $x_0$ is not important we write  $B_r=B_r(x_0)$. For a measurable function $v$ in $\Omega$, $v^\pm:=\max\{\pm v,0\}$, and  $(v)_\Omega$ is denoted  by the mean of $v$ in $\Omega$, that is, $(v)_\Omega:= \fint_{\Omega} v \,dx =\frac{1}{|\Omega|}\int_{\Omega} v\, dx$.

We always assume that $s,p,K: \R^n\times \R^n \to \R$ are symmetric and satisfy \eqref{s1s2}, \eqref{gamma} and \eqref{Lambda}, respectively. For $E\subset \R^n$, we define 
\[
s^-_{E}:=\inf_{(x,y)\in E\times E} s(x,y)
\quad \text{and} \quad 
s^+_{E}:=\sup_{(x,y)\in E\times E} s(x,y),
\]
and $p^\pm_{E}$ by the same quantities for $p(\cdot,\cdot)$. Hence we have  $p^{\pm}_{\R^n}=p^{\pm}$ and  $s^{\pm}_{\R^n}=s^{\pm}$.  For a measurable function $v$ in $\Omega$, set
 \[
\varrho_{s(\cdot,\cdot),p(\cdot,\cdot)}(v;\Omega):= \int_\Omega \int_{\Omega} \frac{|v(x)-v(y)|^{p(x,y)}}{|x-y|^{n+s(x,y)p(x,y)}} \,dy\, dx.
\] 
For  constant  $p\in [1,\infty)$,  we define fractional Sobolev space
$W^{s(\cdot,\cdot),p}(\Omega):=\{v\in L^{p}(\Omega): \varrho_{s(\cdot,\cdot),p}(v;\Omega)<\infty\}$. Note that if $s(\cdot,\cdot)$ is constant with  $s(\cdot,\cdot)\equiv s$, $W^{s,p}(\Omega)$ is the usual fractional Sobolev space.

We next introduce function spaces related to the weak solution to \eqref{mainPDE}. We define $\mathbb W^{s(\cdot,\cdot),p(\cdot,\cdot)}(\Omega)$ by the set of all measurable functions $v:\R^n \to \R$ satisfying that
\[
v|_{\Omega}\in L^{p^{-}_{\Omega}}(\Omega) 
\quad \text{and}\quad
\iint_{\mathcal C_\Omega} \frac{|v(x)-v(y)|^{p(x,y)}}{|x-y|^{n+s(x,y)p(x,y)}} \,dy\, dx  < \infty.
\]
Clearly, $\varrho_{s(\cdot,\cdot),p(\cdot,\cdot)}(u;\Omega) < \infty$ if $v\in \mathbb W^{s(\cdot,\cdot),p(\cdot,\cdot)}(\Omega)$.
We next define the tail space with variable powers $ L^\infty(\Omega;L^{p(\cdot,\cdot)-1}_{s(\cdot,\cdot)p(\cdot,\cdot)}(\R^n))$  by the set of all measurable functions $v:\R^n \to \R$ satisfying that
\[
[v]_{L^\infty(\Omega;L^{p(\cdot,\cdot)-1}_{s(\cdot,\cdot)p(\cdot,\cdot)}(\R^n))}:=\underset{x\in 
\Omega}{\mathrm{ess\, sup}}\int_{\R^n}\frac{|v(y)|^{p(x,y)-1}}{(1+|y|)^{n+s(x,y)p(x,y)}}\,dy <\infty.
\]
Note that  if $v\in   L^\infty(\Omega;L^{p(\cdot,\cdot)-1}_{s(\cdot,\cdot)p(\cdot,\cdot)}(\R^n))$, the following quantity is always finite whenever $x_0\in \R^n$, $r>0$ and $B_\rho(x_0)\subset \Omega$ :
\begin{equation}\label{taildef}
T (v;x_0,r, \rho) := \sup_{x\in B_{\rho}(x_0) }\int_{\R^n\setminus B_r(x_0)}\frac{|v(y)|^{p(x,y)-1}}{|y-x_0|^{n+s(x,y)p(x,y)}}\,dy,
\end{equation}
that is  called the nonlocal tail with variable powers.  Indeed,  since $1+|y| \le (1+\tfrac{|x_0|+1}{r}) |y-x_0|$ for $y\in \R^n\setminus B_{r}(x_0)$, we have
\begin{equation}\label{T1}
T (u;x_0,r,\rho) \le \left(1+\tfrac{|x_0|+1}{r}\right)^{n+s^+p^{+}}[v]_{L^\infty(\Omega;L^{p(\cdot,\cdot)-1}_{s(\cdot,\cdot)p(\cdot,\cdot)}(\R^n))} <\infty.
\end{equation}
Note that they are the variable versions of the tail space and the nonlocal tail introduced in \cite{DKP1,DKP2,KKP1,Pal1}.
For simplicity, if $x_0$ is obvious, we write 
$T (v;r,\rho) = T (v;x_0,r,\rho)$.

\begin{remark}\label{rmkL}
If $v\in L^{\gamma}(\R^n)$ for some $\gamma \ge p^{+}-1$, or if $v\in L^{p^{+}-1}(B_{M}(0))$ for some $M>0$ and is bounded in $\R^n\setminus B_{M}(0)$, then $v\in L^\infty(\Omega;L^{p(\cdot,\cdot)-1}_{s(\cdot,\cdot)p(\cdot,\cdot)}(\R^n))$.  

In particular, $v\in L^\infty(\R^n)\subset L^\infty(\Omega;L^{p(\cdot,\cdot)-1}_{s(\cdot,\cdot)p(\cdot,\cdot)}(\R^n))$ for every $\Omega$, and if $p(\cdot,\cdot)$ is constant with $p(\cdot,\cdot) \equiv p $,  by \cite[Proposition 2.1 and Theorem 6.5]{DPV1} we see that 
\[
W^{s(\cdot,\cdot),p}(\R^n) \subset W^{t,p}(\R^n) \subset L^{p^*_t}(\R^n)  \subset L^\infty(\Omega;L^{p-1}_{s(\cdot,\cdot)p}(\R^n)),
\]
where $t\in(0,s^-)$ is an arbitrary number satisfying $tp<n$. 
\end{remark}


\subsection{Inequalities}

We introduce inequalities used in this paper. 
The first inequality is a general version of the inequality in \cite[Lemma 4.6]{Co1} in the setting with variable powers. Note that  \cite[Lemma 4.6]{Co1} implies the inclusion $W^{s_2,p_2}(\Omega)\subset W^{s_1,p_1}(\Omega)$ with $0<s_1<s_2<1$ and $p_1\leq p_2$. Therefore, the following lemma also implies a similar inclusion for fractional Sobolev spaces with variable powers.   

\begin{lemma}\label{lempxpq}
Let $\Omega\subset \R^n$ be bounded, and $p_1, p_2,s_1,s_2:\Omega\times \Omega \to (0,\infty)$ be symmetric and satisfy that $s_1(x,y)< s_2(x,y)$,  $p_1(x,y) \le p_2(x,y)$, $0< (p_1)^-_\Omega\le (p_1)^-_\Omega<\infty $  and  $d_1\le s_2(x,y)- s_1(x,y) \le d_2$ for all $x,y\in \Omega$ and for some $0<d_1\le d_2$. Then we have 
\[\begin{split}
 \varrho_{s_1(\cdot,\cdot), p_1(\cdot,\cdot)}(v,\Omega)  \le M\left\{\varrho_{s_2(\cdot,\cdot), p_2(\cdot,\cdot)}(v,\Omega)  + \frac{c(n)}{d_1(p_1)^-_\Omega} |\Omega\cap\{v\neq 0\}| \right\},
\end{split}\]
where 
\[
M=\begin{cases}
\mathrm{diam}(\Omega)^{d_1 (p_1)^-_\Omega} \quad \text{if } \  \mathrm{diam}(\Omega)\le 1,\\
\mathrm{diam}(\Omega)^{d_2 (p_1)^+_\Omega} \quad \text{if } \  \mathrm{diam}(\Omega)> 1,
\end{cases}
\]
and the constant $c(n)>0$ depends only on $n$.
\end{lemma}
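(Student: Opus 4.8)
The plan is to adapt the strategy of \cite[Lemma 4.6]{Co1} --- bounding the low‑order difference quotient by the high‑order one through a pointwise case split --- while keeping track of how the variable powers enter, and then to symmetrise the ``geometric'' part so as to produce the Lebesgue‑measure term. Write $D:=\diam(\Omega)$ and $\sigma:=d_1(p_1)^-_\Omega>0$. Since the integrand of $\varrho_{s_1(\cdot,\cdot),p_1(\cdot,\cdot)}(v;\Omega)$ vanishes when $v(x)=v(y)$, it suffices to estimate it at points where $a:=|v(x)-v(y)|>0$; fixing such a point, abbreviating $p_i=p_i(x,y)$, $s_i=s_i(x,y)$ and setting $\rho=|x-y|\in(0,D]$, I would start from the identity
\[
\frac{a^{p_1}}{\rho^{n+s_1p_1}}=\frac{a^{p_2}}{\rho^{n+s_2p_2}}\; a^{\,p_1-p_2}\,\rho^{\,s_2p_2-s_1p_1}.
\]

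The decisive step is to split according to whether $a\geq\rho^{s_2}$ or $a<\rho^{s_2}$. If $a\geq\rho^{s_2}$, then since $p_1\leq p_2$ the map $t\mapsto t^{p_1-p_2}$ is nonincreasing, so $a^{p_1-p_2}\leq\rho^{s_2(p_1-p_2)}$; substituting this into the identity, the factor multiplying $\tfrac{a^{p_2}}{\rho^{n+s_2p_2}}$ collapses to $\rho^{(s_2-s_1)p_1}$, and since $\rho\leq D$ and $\sigma\leq(s_2-s_1)p_1\leq d_2(p_1)^+_\Omega$ we get $\rho^{(s_2-s_1)p_1}\leq M$ --- this dichotomy, governed by whether $t\mapsto D^{t}$ is decreasing ($D\leq1$) or increasing ($D>1$), is exactly the source of the two formulas for $M$. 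Integrating over the subset of $\Omega\times\Omega$ where this case holds then gives a contribution $\leq M\,\varrho_{s_2(\cdot,\cdot),p_2(\cdot,\cdot)}(v;\Omega)$. If instead $a<\rho^{s_2}$, then $a^{p_1}<\rho^{s_2p_1}$, hence
\[
\frac{a^{p_1}}{\rho^{n+s_1p_1}}<\rho^{(s_2-s_1)p_1-n}\leq g(x,y):=\max\{\,|x-y|^{\sigma-n},\,|x-y|^{d_2(p_1)^+_\Omega-n}\,\},
\]
using $\sigma\leq(s_2-s_1)p_1\leq d_2(p_1)^+_\Omega$ and $\rho\leq D$ once more; the crucial feature is that $g$ depends on $|x-y|$ alone and not on $v$.

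It then remains to integrate $g$ over $\{(x,y)\in\Omega\times\Omega:v(x)\neq v(y)\}$. This set lies in $(\{v\neq0\}\times\Omega)\cup(\Omega\times\{v\neq0\})$, so since $g(x,y)=g(y,x)$ the integral is at most $2\int_{\Omega\cap\{v\neq0\}}\big(\int_{B_D(x)}g(x,y)\,dy\big)dx$; the inner integral, evaluated in polar coordinates, equals $\tfrac{c(n)}{\sigma}D^{\sigma}+\tfrac{c(n)}{d_2(p_1)^+_\Omega}D^{d_2(p_1)^+_\Omega}$, which is $\leq\tfrac{2c(n)}{d_1(p_1)^-_\Omega}M$ because $D^{\sigma}\leq M$, $D^{d_2(p_1)^+_\Omega}\leq M$ and $d_2(p_1)^+_\Omega\geq d_1(p_1)^-_\Omega$. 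Adding the contributions of the two cases and relabelling the dimensional constant yields the asserted inequality.

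The only genuinely delicate point is the choice of threshold: the split must be made at $a=\rho^{\,s_2(x,y)}$, with the \emph{larger} order $s_2$, rather than at $a=1$ or via a Young‑type splitting --- only then does the $v$‑independent term carry the exponent $(s_2-s_1)p_1-n$ on $|x-y|$, whose margin over $-n$, namely $(s_2-s_1)p_1$, is bounded below by $d_1(p_1)^-_\Omega$ \emph{uniformly} in $(x,y)$; this uniform positive lower bound is precisely what makes $g$ locally integrable with the claimed constant $\tfrac{c(n)}{d_1(p_1)^-_\Omega}$. The remaining points --- the change of monotonicity of $t\mapsto D^{t}$ at $\diam(\Omega)=1$, and the locus $p_1(x,y)=p_2(x,y)$ (where $a^{p_1-p_2}=1$, so no separate argument is needed) --- are routine bookkeeping.
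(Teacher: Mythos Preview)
Your proof is correct and follows essentially the same approach as the paper's. The paper rewrites the integrand as $\bigl(|v(x)-v(y)|/|x-y|^{s_2}\bigr)^{p_1}$ times the weight $|x-y|^{-n+(s_2-s_1)p_1}$ and then uses $t^{p_1}\le t^{p_2}+\mathbf 1_{\{t>0\}}$; your case split at $a=\rho^{s_2}$ is exactly this dichotomy, and the remaining estimates (bounding $|x-y|^{(s_2-s_1)p_1}\le M$ and integrating the geometric remainder over $\{v\neq0\}$ in polar coordinates) coincide with the paper's. One cosmetic slip: where you say the inner integral ``equals'' $\tfrac{c(n)}{\sigma}D^{\sigma}+\tfrac{c(n)}{d_2(p_1)^+_\Omega}D^{d_2(p_1)^+_\Omega}$, you mean ``is bounded by'' (you have replaced the $\max$ by a sum), but the subsequent estimate is unaffected.
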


\begin{proof} 
The direct computation yields that
\[\begin{split}
& \varrho_{s_1(\cdot,\cdot),p_1(\cdot,\cdot)}(v,\Omega)  = \int_\Omega \int_\Omega \left(\frac{|v(x)-v(y)|}{|x-y|^{s_2(x,y)}}\right)^{ p_1(x,y)} \frac{dy\,dx}{|x-y|^{n-(s_2(x,y)- s_1(x,y))p_1(x,y)} } \\
&\le \int_\Omega \int_\Omega \left(\frac{|v(x)-v(y)|}{|x-y|^{s_2(x,y)}}\right)^{ p_2(x,y)} \frac{dy\,dx}{|x-y|^{n-(s_2(x,y)-s_1(x,y)) p_1(x,y)} } \\
&\qquad +2\int_{\Omega\cap\{v\neq 0\}} \int_\Omega   \frac{dy\,dx}{|x-y|^{n-(s_2(x,y)- s_1(x,y))\ p_1(x,y)} } \\
&\le M  \varrho_{s_2(\cdot,\cdot), p_2(\cdot,\cdot)}(v,B_r) +2\int_{\Omega\cap\{v\neq 0\}} \int_{\Omega}  \frac{dy\,dx}{|x-y|^{n-(s_2(x,y)-s_1(x,y)) p_1(x,y)} }.
\end{split}\] 
Note that if $\mathrm{diam}(\Omega)\le 1$
\[
\int_{\Omega} \, \frac{dy}{|x-y|^{n-(s_2(x,y)- s_1(x,y)) p_1(x,y)} } \le  c(n)\int_0^{\mathrm{diam}(\Omega)} r^{-1+d_1(p_1)^-_\Omega }dr = \frac{c(n)}{d_1\tilde p^-_\Omega} M,
\]
and if $\mathrm{diam}(\Omega)> 1$
\[\begin{split}
\int_{\Omega}  \frac{dy}{|x-y|^{n-(s_2(x,y)- s_1(x,y))\tilde p_2(x,y)} } & \le c(n)\left(\int^{\mathrm{diam}(\Omega)}_1 r^{-1+d_2(p_1)^+_\Omega} dr  + \int^1_0 r^{-1+d_1(p_1)^-_\Omega} dr \right)\\
& \le\frac{c(n)}{d_1 (p_1)^-_\Omega} M. \qedhere
\end{split}\]

\end{proof}

We next introduce a fraction Sobolev-Poincar\'e type inequality for the fractional Sobolev spaces with constant powers. This is a simple corollary of \cite[Theorem 6.7]{DPV1}. However, for the sake of completeness, we report a proof.

\begin{lemma}\label{lemSPineq}
Let $0<s<1$, $1\le p<\infty$, and $sp < n$. For any $v\in  L^1(B_r)$ with  $\varrho_{s,p}(v;B_r)<\infty$, we have
\[
 \left(\fint_{B_r}|v-(v)_{B_r}|^{p_s^*}\,dx\right)^{\frac{p}{p_s^*}}  \le c r^{sp}\fint_{B_r}\int_{B_r}\frac{|v(x)-v(y)|^{p}}{|x-y|^{n+sp}}\,dy\,dx,
\]
where $c=c(n,s,p)>0$. Moreover, if $s<t$ and $p<q$ we also have that
\[
 \left(\fint_{B_r}|v-(v)_{B_r}|^{p_s^*}\,dx\right)^{\frac{q}{p_s^*}}  \le c r^{tq}\fint_{B_r}\int_{B_r}\frac{|v(x)-v(y)|^{q}}{|x-y|^{n+tq}}\, dy \,dx
\]
for some $c=c(n,p,q,s,t)>0$, if the right hand side is finite.
\end{lemma}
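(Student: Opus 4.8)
The plan is to derive the first inequality directly from the fractional Sobolev--Poincar\'e inequality of \cite[Theorem 6.7]{DPV1}, rescaling to make the dependence on $r$ explicit, and then to obtain the second inequality from the first by absorbing the extra differentiability and integrability using a H\"older-type bound in the spirit of Lemma~\ref{lempxpq}. First I would recall that \cite[Theorem 6.7]{DPV1} gives, on the unit ball $B_1$, a constant $c=c(n,s,p)$ such that
\[
\left(\fint_{B_1}|w-(w)_{B_1}|^{p_s^*}\,dx\right)^{p/p_s^*}\le c\fint_{B_1}\int_{B_1}\frac{|w(x)-w(y)|^{p}}{|x-y|^{n+sp}}\,dy\,dx
\]
for every $w\in L^1(B_1)$ with finite right-hand side (the mean can be subtracted freely, and the averaged form follows from the stated one after dividing by $|B_1|$). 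Applying this to $w(x):=v(x_0+rx)$ and changing variables $x\mapsto x_0+rx$, $y\mapsto x_0+ry$ on both sides produces exactly the factor $r^{sp}$ on the right: the left-hand side is scale-invariant because it is built from averages and a translation-invariant difference, while the double integral picks up $r^{sp}$ from the kernel after the Jacobians cancel against the averaging. This gives the first claimed inequality.

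For the second inequality, I would start from the first one applied with exponents $(s,p)$ and then bound its right-hand side by the right-hand side with the larger exponents $(t,q)$. Writing the integrand as
\[
\frac{|v(x)-v(y)|^{p}}{|x-y|^{n+sp}}=\left(\frac{|v(x)-v(y)|}{|x-y|^{t}}\right)^{p}|x-y|^{tp-sp-n},
\]
I would apply Young's (or H\"older's) inequality with exponents $q/p$ and $q/(q-p)$ to split off the quantity $\bigl(|v(x)-v(y)|/|x-y|^{t}\bigr)^{q}$, which rebuilds the $(t,q)$-Gagliardo seminorm, against a power of $|x-y|$ that is integrable over $B_r$ since $t>s$ forces the resulting exponent to be strictly larger than $-n$ locally. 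The leftover $|x-y|$-integral over $B_r\times B_r$ contributes a power of $r$ which, combined with the $r^{sp}$ from the first inequality and the rescaling of the average on the left (passing from the power $p/p_s^*$ to $q/p_s^*$ costs only constants and powers of $|B_r|$), yields the factor $r^{tq}$; tracking the exponents of $r$ carefully shows they match, and all constants depend only on $n,p,q,s,t$. Here one uses that the right-hand side is assumed finite, so Young's inequality is legitimate and no term is infinite.

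The main obstacle I anticipate is purely bookkeeping: getting the power of $r$ in the second inequality to come out as exactly $tq$ rather than some nearby exponent. This requires being careful that the left-hand side is raised to the power $q/p_s^*$ (not $p/p_s^*$), so one first raises the first inequality to the power $q/p$, which turns $r^{sp}$ into $r^{sq}$, and then the H\"older split must contribute precisely the remaining $r^{(t-s)q}$; one must also make sure the Jacobian factors from any rescaling in the H\"older step are correctly normalized against the $\fint$'s. There is no genuine analytic difficulty beyond \cite[Theorem 6.7]{DPV1} itself, so the proof is short once the scaling is set up correctly.
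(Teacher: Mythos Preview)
Your plan is correct and follows essentially the same route as the paper. The paper also derives the first inequality from \cite[Theorem~6.7]{DPV1}, and deduces the second by applying H\"older's inequality with the exponents $q/p$ and $q/(q-p)$ exactly as you propose; the remainder integral of $|x-y|^{-n+\frac{(t-s)pq}{q-p}}$ contributes precisely the missing $r^{(t-s)q}$, so your bookkeeping concern resolves cleanly. Two small remarks: (i) the phrase ``the mean can be subtracted freely'' hides a fractional Poincar\'e step---\cite[Theorem~6.7]{DPV1} gives $\|w\|_{L^{p^*_s}}\le c([w]_{W^{s,p}}+\|w\|_{L^p})$, so after replacing $w$ by $w-(w)_{B_1}$ you still need to absorb the $L^p$ term into the seminorm, which the paper does explicitly via $\fint|v-(v)_{B_r}|^p\le cr^{sp}\fint\!\int|v(x)-v(y)|^p|x-y|^{-n-sp}\,dy\,dx$; and (ii) stick with H\"older rather than Young for the second part, since the multiplicative bound is what gives the clean power of $r$.
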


\begin{proof}
We first observe from H\"older's inequality that 
\[\begin{split}
\fint_{B_r}|v-(v)_{B_r}|^{p}\,dx & \le \frac{1}{|B_r|}\fint_{B_r}\int_{B_r} |v(x)-v(y)|^{p}\,dy\, dx \\
& \le \frac{(2r)^{n+sp}}{|B_r|} \fint_{B_r}\int_{B_r} \frac{|v(x)-v(y)|^{p}}{|x-y|^{n+sp}}\,dy\, dx\\
& \le c r^{sp} \fint_{B_r}\int_{B_r} \frac{|v(x)-v(y)|^{p}}{|x-y|^{n+sp}}\,dy\, dx,
\end{split}\]
which is in fact the fractional version of Poincar\'e inequality. Hence  $v-(v)_{B_r}\in W^{s,p}(B_r)$. Then, by the Sobolev inequality for the fractional Sobolev spaces with constant powers, see for instance \cite[Theorems 6.7 and 6.10]{DPV1},  we have 
\[\begin{aligned}
 \left(\fint_{B_r}|v-(v)_{B_r}|^{p_s^*}\,dx\right)^{\frac{p}{p_s^*}}  & \le c r^{sp} \fint_{B_r}\int_{B_r}\frac{|v(x)-v(y)|^{p}}{|x-y|^{n+sp}}\,dy\,dx + c \fint_{B_r}|v-(v)_{B_r}|^{p}\,dx\\
 & \le c r^{sp} \fint_{B_r}\int_{B_r} \frac{|v(x)-v(y)|^{p}}{|x-y|^{n+sp}}\,dy\, dx.
\end{aligned}\]
Therefore, we have the first inequality. The second inequality is implied by the first inequality with the following estimate: 
\[\begin{split}
&\fint_{B_r}\int_{B_r} \frac{|v(x)-v(y)|^{p}}{|x-y|^{n+sp}}\,dy\, dx\\ 
&\le \left(\fint_{B_r}\int_{B_r} \frac{|v(x)-v(y)|^{q}}{|x-y|^{n+tq}}\,dy\, dx\right)^{\frac{p}{q}} \left(\fint_{B_r}\int_{B_r} \frac{1}{|x-y|^{n+\frac{(s-t)pq}{q-p}}}\,dy\, dx\right)^{\frac{q-p}{q}}\\
& \le c(n,p,q,s,t)\left(r^{(s-t)q}\fint_{B_r}\int_{B_r} \frac{|v(x)-v(y)|^{q}}{|x-y|^{n+tq}}\,dy\, dx\right)^{\frac{p}{q}}.
\end{split}\]
\end{proof}

The next inequality  can be found in \cite[Lemma 4.3]{Co1}.
\begin{lemma} \label{lemineq1}
Let $p>1$ and $a\ge b \ge 0$. For any $\epsilon>0$
\[
a^p - b^p \le \epsilon a^p + \left(\frac{p-1}{\epsilon}\right)^{p-1} (a-b)^p,
\]
hence 
\[
a^p - b^p \le  \epsilon b^p + \left(\epsilon + c\epsilon^{1-p}\right) (a-b)^p
\]
for some $c>0$ depending only on $p$.
\end{lemma}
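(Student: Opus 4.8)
This is a purely elementary two-variable inequality, and the second estimate is presented as a consequence of the first, so the plan is: prove the first inequality by a mean-value estimate combined with Young's inequality, and then deduce the second by an absorption trick. One may assume $a>0$ throughout (else $a=b=0$ and there is nothing to prove), and for the first inequality one may further assume $0<\epsilon<1$, since for $\epsilon\ge 1$ one trivially has $a^p-b^p\le a^p\le\epsilon a^p$. The starting point is that $t\mapsto t^{p-1}$ is nondecreasing on $[0,\infty)$ because $p>1$, whence
\[
a^p-b^p=\int_b^a p\,t^{p-1}\,dt\le p\,a^{p-1}(a-b).
\]

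Next I would apply Young's inequality $xy\le \tfrac{x^{r}}{r}+\tfrac{y^{r'}}{r'}$ with the conjugate exponents $r=\tfrac{p}{p-1}$, $r'=p$ to the nonnegative factors $x=\tau\,a^{p-1}$ and $y=\tau^{-1}(a-b)$, where $\tau>0$ is a free parameter; this yields
\[
p\,a^{p-1}(a-b)\le (p-1)\,\tau^{\frac{p}{p-1}}\,a^{p}+\tau^{-p}\,(a-b)^{p}.
\]
Choosing $\tau$ so that $(p-1)\,\tau^{\frac{p}{p-1}}=\epsilon$, equivalently $\tau^{-p}=\bigl(\tfrac{p-1}{\epsilon}\bigr)^{p-1}$, and combining the last two displays gives exactly the first inequality.

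For the second inequality I would substitute $a^{p}=b^{p}+(a^{p}-b^{p})$ into the first inequality applied with a parameter $\delta\in(0,\tfrac12]$ in place of $\epsilon$, obtaining
\[
a^p-b^p\le \delta\,b^{p}+\delta\,(a^p-b^p)+\Bigl(\tfrac{p-1}{\delta}\Bigr)^{p-1}(a-b)^{p},
\]
and then absorb the term $\delta(a^p-b^p)$ into the left-hand side to get
\[
a^p-b^p\le \frac{\delta}{1-\delta}\,b^{p}+\frac{1}{1-\delta}\Bigl(\tfrac{p-1}{\delta}\Bigr)^{p-1}(a-b)^{p}.
\]
If $0<\epsilon\le 1$ I would pick $\delta=\tfrac{\epsilon}{1+\epsilon}\in(0,\tfrac12]$, so that $\tfrac{\delta}{1-\delta}=\epsilon$, $\tfrac{1}{1-\delta}=1+\epsilon\le 2$ and $\tfrac1\delta\le\tfrac2\epsilon$, which gives the claim with $c=2\,(2(p-1))^{p-1}$; if $\epsilon>1$ I would simply invoke the already proved case $\epsilon=1$ and enlarge $c$ (depending only on $p$), using $\epsilon+c\,\epsilon^{1-p}\ge\epsilon$.

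Since every step is elementary, there is no genuine obstacle here; the only points requiring some care are the constant bookkeeping in the absorption step --- choosing $\delta$ so that $\tfrac1{1-\delta}$ stays bounded while producing exactly the coefficient $\epsilon$ in front of $b^{p}$ --- and the exponent arithmetic $\tau^{\frac{p}{p-1}}=\tfrac{\epsilon}{p-1}\Rightarrow\tau^{-p}=\bigl(\tfrac{p-1}{\epsilon}\bigr)^{p-1}$ in the application of Young's inequality.
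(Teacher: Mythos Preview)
Your proof is correct. The paper does not actually prove this lemma; it merely quotes it from \cite[Lemma 4.3]{Co1}, so there is no proof in the paper to compare against. Your mean-value-plus-Young argument for the first estimate is the standard one, and the absorption trick you use for the second estimate is exactly what the word ``hence'' in the statement is pointing to. The only place where your write-up is slightly loose is the $\epsilon>1$ case of the second inequality: invoking the $\epsilon=1$ case gives $a^p-b^p\le b^p+C_1(a-b)^p\le \epsilon b^p+C_1(a-b)^p$, and one then needs $C_1\le \epsilon+c\,\epsilon^{1-p}$ for all $\epsilon>1$; this is easily arranged by taking $c\ge C_1^{p}$, but it is worth saying so explicitly rather than just writing ``enlarge $c$''.
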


We end the subsection recalling \cite[Chapter 2. Lemma 4.7]{LU1}.

\begin{lemma}\label{lemseq}
Let $\{y_i\}_{i=0}^\infty$ be a sequence of nonnegative numbers and satisfy
$$
y_{i+1}\leq b_1b_2^{i}y_i^{1+\beta}, \ \ \ i=0,1,2,\dots
$$
for some $b_1,\beta>0$ and $b_2>1$. If 
$$
y_0\leq b_1^{-\frac{1}{\beta}}b_2^{-\frac{1}{\beta^2}},
$$
then $y_i\to 0$ as $i\to \infty$.
\end{lemma}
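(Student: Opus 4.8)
The plan is to prove by induction on $i$ the explicit decay estimate
\[
y_i \le b_2^{-i/\beta} y_0 \qquad (i=0,1,2,\dots),
\]
which immediately gives $y_i\to 0$ since $b_2>1$. The base case $i=0$ is trivial. For the inductive step, assume $y_i \le b_2^{-i/\beta} y_0$. Plugging this into the hypothesis and using $y_0 \le b_1^{-1/\beta} b_2^{-1/\beta^2}$ one computes
\[
y_{i+1} \le b_1 b_2^{i} y_i^{1+\beta}
\le b_1 b_2^{i} \big(b_2^{-i/\beta} y_0\big)^{1+\beta}
= b_1 b_2^{i} b_2^{-i(1+\beta)/\beta} y_0^{1+\beta}
= b_1 b_2^{-i/\beta} y_0^{1+\beta}.
\]
Now write $y_0^{1+\beta} = y_0 \cdot y_0^{\beta}$ and bound $y_0^{\beta} \le b_1^{-1} b_2^{-1/\beta}$ using the smallness hypothesis on $y_0$; this yields
\[
y_{i+1} \le b_1 b_2^{-i/\beta} y_0 \cdot b_1^{-1} b_2^{-1/\beta}
= b_2^{-(i+1)/\beta} y_0,
\]
which closes the induction.

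The only point requiring a little care is the interplay of exponents: the factor $b_2^{i}$ coming from the recursion is exactly cancelled, up to one extra power of $b_2^{-1/\beta}$, by raising the inductive bound $b_2^{-i/\beta}y_0$ to the power $1+\beta$, and the factor $b_1$ is absorbed by the term $y_0^{\beta}\le b_1^{-1}b_2^{-1/\beta}$ extracted from the hypothesis $y_0 \le b_1^{-1/\beta}b_2^{-1/\beta^2}$. There is no real analytic obstacle here; the ``hard part'' is simply keeping the bookkeeping of the exponents of $b_1$, $b_2$ straight and verifying that the smallness threshold on $y_0$ is precisely what makes the induction self-improving (if $y_0$ were larger the term $y_0^{\beta}$ would not be controlled and the bound could blow up). Once the displayed geometric decay is established, letting $i\to\infty$ finishes the proof.
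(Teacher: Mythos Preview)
Your proof is correct; the induction on the explicit decay $y_i\le b_2^{-i/\beta}y_0$ is the standard argument and the exponent bookkeeping checks out. The paper does not actually prove this lemma---it simply cites \cite[Chapter 2, Lemma 4.7]{LU1}---so there is no in-paper proof to compare against, but your argument is precisely the classical one given in that reference.
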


\section{\bf Existence of weak solution}\label{sec3}

We discuss on the existence of weak solution to \eqref{mainPDE}. This is naturally linked to the existence of a minimizer of
\begin{equation}\label{energy}
\mathcal E (v;\Omega):= \iint_{\mathcal C_\Omega}\frac{1}{p(x,y)} |v(x)-v(y)|^{p(x,y)} K(x,y) \,dy\, dx.
\end{equation}
We say that $u\in \mathbb{W}^{s(\cdot,\cdot),p(\cdot,\cdot)}(\Omega)$ is a minimizer of the energy functional \eqref{energy} if 
\[
\mathcal E(u;\Omega) \le \mathcal E(v;\Omega)
\]
for every $v\in \mathbb{W}^{s(\cdot,\cdot),p(\cdot,\cdot)}(\Omega)$ with $u=v$ a.e. in $\R^n\setminus \Omega$. By a standard argument, see for instance the proof of \cite[Theorem 2.3]{DKP2}, we notice that $u\in \mathbb{W}^{s(\cdot,\cdot),p(\cdot,\cdot)}(\Omega)$ is a weak solution to \eqref{mainPDE} if and only if $u$ is a minimizer of \eqref{energy}.  Hence we prove the existence and uniqueness of a minimizer of \eqref{energy}. We emphasize that no regularity condition on $s(\cdot,\cdot)$ and $p(\cdot,\cdot)$ is assumed. On the other hand, $\Omega$ is assumed to be Lipschitz, in order to apply the compact embedding theorem for the fractional Sobolev space $W^{s,p}$.

\begin{theorem} \label{thmexistence}
 Let $s,p,K:\R^n\times \R^n\to \R$ be symmetric and satisfy \eqref{s1s2}, \eqref{gamma} and \eqref{Lambda}, respectively, $\Omega$ be bounded, open and Lipschitz, and $g\in \mathbb{W}^{s(\cdot,\cdot),p(\cdot,\cdot)}(\Omega)$. There exists a unique minimizer $u\in \mathbb{W}^{s(\cdot,\cdot),p(\cdot,\cdot)}(\Omega)$ with $u=g$ a.e. in $\R^n\setminus \Omega$ of \eqref{energy}. 

Moreover, suppose that $p^+-1 < \frac{np^-}{n-s^-p^-}$ or $ s^-p^-\ge n$. If $g\in \mathbb{W}^{s(\cdot,\cdot),p(\cdot,\cdot)}(\Omega)\cap L^\infty(\Omega;L^{p(\cdot,\cdot)-1}_{s(\cdot,\cdot)p(\cdot,\cdot)}(\R^n))$, then 
 the minimizer $u\in \mathbb{W}^{s(\cdot,\cdot),p(\cdot,\cdot)}(\Omega)$ is in $ L^\infty(\Omega;L^{p(\cdot,\cdot)-1}_{s(\cdot,\cdot)p(\cdot,\cdot)}(\R^n))$.
\end{theorem}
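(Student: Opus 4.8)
The plan is to split the statement into two parts: (i) existence and uniqueness of a minimizer of $\mathcal E(\cdot;\Omega)$ in the Dirichlet class $g+\{v=0 \text{ a.e. in }\R^n\setminus\Omega\}$, and (ii) the a~priori tail bound on the minimizer. For part~(i) I would run the direct method. First note that the admissible class is nonempty (it contains $g$) and that $\mathcal E(v;\Omega)<\infty$ there by the definition of $\mathbb W^{s(\cdot,\cdot),p(\cdot,\cdot)}(\Omega)$ together with \eqref{Lambda}. Next I would establish coercivity: for $v=g$ on $\R^n\setminus\Omega$, the tail interaction terms in $\mathcal C_\Omega$ involving one point in $\Omega$ and one in $\R^n\setminus\Omega$ can be controlled from below by a fixed finite quantity minus a multiple of $\|v\|_{L^{p^-_\Omega}(\Omega)}^{p^-_\Omega}$ (using $g\in\mathbb W^{s(\cdot,\cdot),p(\cdot,\cdot)}(\Omega)$ and Young's inequality), while the diagonal term $\varrho_{s(\cdot,\cdot),p(\cdot,\cdot)}(v;\Omega)$ dominates, via Lemma~\ref{lempxpq}, the constant-order/constant-exponent Gagliardo seminorm $[v]_{W^{t,p^-_\Omega}(\Omega)}^{p^-_\Omega}$ for a suitable $t\in(0,s^-)$ modulo $|\Omega|$. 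Combined with the fractional Poincaré inequality from Lemma~\ref{lemSPineq} (applied to $v-g$, which vanishes outside $\Omega$, on a ball containing $\Omega$), this yields $\mathcal E(v;\Omega)\to\infty$ as $\|v\|_{L^{p^-_\Omega}(\Omega)}\to\infty$ along the admissible class. Taking a minimizing sequence $\{v_k\}$, it is bounded in $W^{t,p^-_\Omega}(\Omega)$; by the compact embedding $W^{t,p^-_\Omega}(\Omega)\hookrightarrow\hookrightarrow L^{p^-_\Omega}(\Omega)$ (here is where Lipschitzness of $\Omega$ enters) and a diagonal/Fatou argument I extract a subsequence converging a.e. in $\R^n$ (the values outside $\Omega$ are frozen to $g$). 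Weak lower semicontinuity of $\mathcal E$ follows from Fatou's lemma applied to the nonnegative integrand, so the limit $u$ is a minimizer and lies in the admissible class. Uniqueness is immediate from strict convexity: $t\mapsto\frac{1}{p}|t|^p$ is strictly convex for $p>1$, so $\mathcal E$ is strictly convex on the (convex) admissible class, hence the minimizer is unique.

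For part~(ii), the tail bound, I would use the variational (comparison) characterization of the minimizer against a truncation of $g$. Fix $\ell>0$ large and set $v_\ell:=\max\{\min\{u,\ell\},-\ell\}$ on $\Omega$ extended by $g$ outside. Since $|g|\le\|g\|_{L^\infty(\Omega;\,\cdot)}$-type quantities need not hold, instead I would compare $u$ with the competitor $w=g$ on $\R^n\setminus\Omega$ and $w=$ the truncation $T_\ell u$ on $\Omega$ where $\ell\ge \|g\|_{L^\infty(\Omega)}$ is not available either — so the cleaner route is: test the weak formulation \eqref{weakform} with $\phi=(u-T_\ell u)$ (which vanishes outside $\Omega$ once $\ell$ exceeds the essential sup of $g$ on $\partial$-neighborhoods)… Since $g$ need not be bounded, I would instead argue that $u$ minimizes, compare $\mathcal E(u;\Omega)\le\mathcal E(w;\Omega)$ with $w:=$ the competitor obtained by replacing $u$ by $\max\{\min\{u,\sup_\Omega g^+\},-\sup_\Omega g^-\}$ only on the set where this changes $u$, and observe that on that set the change strictly decreases each Gagliardo-type integrand unless $u$ already lay between the bounds. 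This is exactly the standard comparison principle argument and yields $\mathrm{ess\,sup}_\Omega|u|\le\mathrm{ess\,sup}_{\R^n\setminus\Omega}|g|$ in terms of boundary data; combined with the structural hypothesis $p^+-1<\frac{np^-}{n-s^-p^-}$ (or $s^-p^-\ge n$) this lets me invoke the embedding chain in Remark~\ref{rmkL} — $u\in W^{s(\cdot,\cdot),p^-}(\Omega)\subset W^{t,p^-}(\Omega)\subset L^{(p^-)^*_t}(\Omega)$ with $(p^-)^*_t\ge p^+-1$ for $t$ close enough to $s^-$ — to conclude $u\in L^{p^+-1}(\Omega)$; since $u=g\in L^\infty(\Omega;L^{p(\cdot,\cdot)-1}_{s(\cdot,\cdot)p(\cdot,\cdot)}(\R^n))$ outside $\Omega$ and $u\in L^{p^+-1}(\Omega)$ with $\Omega$ bounded, the first case of Remark~\ref{rmkL} gives $u\in L^\infty(\Omega;L^{p(\cdot,\cdot)-1}_{s(\cdot,\cdot)p(\cdot,\cdot)}(\R^n))$.

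The main obstacle I expect is twofold. First, in the coercivity/lower-semicontinuity step, the non-diagonal part of $\mathcal C_\Omega$ carries variable powers with possibly large oscillation, so one cannot simply reduce to a constant-power seminorm globally; the trick is to only use Lemma~\ref{lempxpq} to pass to a constant subcritical order $t<s^-$ and constant exponent $p^-_\Omega$ inside $\Omega$, and to bound the $\Omega$-to-$(\R^n\setminus\Omega)$ crossterms crudely by splitting $\R^n\setminus\Omega$ into a bounded annulus (handled by $g\in\mathbb W$) and a far region (handled by the definition of the tail space, using $|x-y|\gtrsim 1+|y|$ as in \eqref{T1}). Second, obtaining the tail bound for the minimizer when the boundary datum $g$ is itself only in the tail space (not bounded) requires care: the comparison-principle truncation must be performed with levels depending on $g$ pointwise, or one must directly show $u\in L^{p^+-1}(\Omega)$ via the embedding in Remark~\ref{rmkL} using only that $u\in\mathbb W^{s(\cdot,\cdot),p(\cdot,\cdot)}(\Omega)$ and the exponent condition $p^+-1<(p^-)^*_{s^-}$ — I would favor the latter, purely functional-analytic route, since it avoids any delicate pointwise comparison and uses exactly the hypothesis stated. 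The degenerate case $s^-p^-\ge n$ is handled separately by the Morrey-type embedding $W^{t,p^-}(\Omega)\hookrightarrow L^\infty(\Omega)$ for $tp^->n$, which again puts $u\in L^{p^+-1}(\Omega)$ trivially.
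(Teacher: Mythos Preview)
Your proposal is correct and follows essentially the same route as the paper: the direct method using Lemma~\ref{lempxpq} to reduce to a constant-order, constant-exponent seminorm, the compact embedding $W^{t,p^-}(\Omega)\hookrightarrow L^{p^-}(\Omega)$ on the Lipschitz domain, and Fatou for lower semicontinuity in part~(i); then the Sobolev embedding $W^{t,p^-}(\Omega)\hookrightarrow L^{p^+-1}(\Omega)$ combined with $u=g$ outside $\Omega$ for part~(ii) --- the truncation/comparison detour you yourself abandon is indeed unnecessary, and the paper goes straight to the functional-analytic argument. Two minor clean-ups: the $\Omega\times(\R^n\setminus\Omega)$ cross terms in $\mathcal E$ are nonnegative and may simply be dropped for the lower bound (no Young-type absorption is needed), and at the borderline $s^-p^-=n$ the Morrey embedding is unavailable, but the Sobolev route still covers it since $(p^-)^*_t\to\infty$ as $tp^-\to n^-$, which is exactly how the paper treats both cases uniformly.
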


\begin{proof}   
We follow the argument in the proof of  \cite[Theorem 2.3]{DKP2}. Note that the uniqueness is the  direct consequence of the strict convexity of the mapping $t\mapsto t^{p(x,y)}$ which is uniform in $(x,y)\in\R^n\times\R^n$, since $p^->1$. 

We now prove the existence. 
  Let $\{u_k\}_{k=1}^\infty\subset\mathbb W^{s(\cdot,\cdot),p(\cdot,\cdot)}(\Omega)$ be a minimizing sequence with $u_k=g$ a.e. in $\R^n\setminus \Omega$. (Note that by the definition of $g$ the admissible set  of the energy functional, that is $\mathbb W^{s(\cdot,\cdot),p(\cdot,\cdot)}_g(\Omega)=\{v\in \mathbb W^{s(\cdot,\cdot),p(\cdot,\cdot)}(\Omega) : v=g\ \ \text{a.e. in }\R^n\setminus \Omega \}$, is non-empty.) Then there exists $M>0$ such that
\[
\varrho_{s(\cdot,\cdot),p(\cdot,\cdot)}(u_k;\Omega) \le \iint_{\mathcal C_\Omega}\frac{|u_k(x)-u_k(y)|^{p(x,y)}}{|x-y|^{n+s(x,y)p(x,y)}}\,dydx \le M \quad \text{for all }\  k=1,2,\dots.
\] 
Hence, by Lemma~\ref{lempxpq}, we have that for any $t\in (0,s^-)$,  $\varrho_{t,p^-}(u_k;\Omega)$ is bounded for $k$.
Moreover, since $u_k-g=0$ a.e. in $\R^n\setminus \Omega$, by using the fractional Sobolev inequality \cite[Theorem 6.5]{DPV1} we have that  for some $B_R=B_R(0)\supset \Omega$ with $R\ge 1$ and $t\in(0,\frac{s^-}{2})$ with $\frac{np^-}{n+tp^-}=:q>1$, 
\[\begin{split}
&\left(\int_{B_R}  |u_k-g|^{p^-}\,dx\right)^{\frac{q}{p^-}}  \le  c\varrho_{t,q}(u_k-g;\R^n)\\
&\le c\varrho_{t,q}(u_k-g;B_R)+ c \int_{ B_R} |u_k(y)-g(y)|^{q} \left(\int_{\R^n \setminus B_R}\frac{1}{|x-y|^{n+tq}}\,dx\right)\,dy \\
&\le c\varrho_{t,q}(u_k-g;B_R)+ c \int_{ B_R} |u_k(y)-g(y)|^{q} \left(\int_{B_{2R} \setminus B_R}\frac{1}{|x-y|^{n+tq}}\,dx\right)\,dy\\
&\le c \int_{B_{2R} }\int_{B_R}\frac{|(u_k(x)-g(x))-(u_k(y)-g(y))|^{q}}{|x-y|^{n+tq}}\,dx\,dy \le c\varrho_{t,q}(u_k-g;B_{2R}) .
\end{split}\]
Note that in the second inequality we used the fact that  
\[
\int_{\R^n \setminus B_R}\frac{1}{|x-y|^{n+tq}}\,dx \le \frac{c(n)}{(r-|y|)^{tq}} \le c(n) \int_{B_{2R} \setminus B_R}\frac{1}{|x-y|^{n+tq}}\,dx \quad \text{for all }\ y\in B_{R}.
\]
Again using Lemma~\ref{lempxpq}, 
we have that
\[\begin{split}
&\left(\int_{\Omega}  |u_k-g|^{p^-}\,dx \right)^{\frac{q}{p^-}}  = \left(\int_{B_R}  |u_k-g|^{p^-}\,dx \right)^{\frac{q}{p^-}}\\
& \le  c R^{s^+p^-} (\varrho_{s(\cdot,\cdot),p(\cdot,\cdot)}(u_k-g;B_{2R})+ R^n)\\
& \le  c R^{s^+p^-} \left(\iint_{\mathcal C_\Omega}\frac{|u_k(x)-u_k(y)|^{p(x,y)}}{|x-y|^{n+s(x,y)p(x,y)}}\,dydx+\iint_{\mathcal C_\Omega}\frac{|g(x)-g(y)|^{p(x,y)}}{|x-y|^{n+s(x,y)p(x,y)}}\,dydx + R^n\right)\\
&\le  c R^{s^+p^-} \left(M+\iint_{\mathcal C_\Omega}\frac{|g(x)-g(y)|^{p(x,y)}}{|x-y|^{n+s(x,y)p(x,y)}}\,dydx + R^n\right) \quad \text{for all }\ k=1,2,\dots,
\end{split}\]
which implies that $\{u_k\}$ is bounded in $L^{p^-}(\Omega)$, hence $\{u_k\}$ is bounded in $W^{t,p^-}(\Omega)$. 

By the compact embedding theorem for the fractional Sobolev space $W^{t,p^-}(\Omega)$ \cite[Theorem 7.1]{DPV1}, there exist a subsequence $\{u_{k_j}\}_{j=1}^\infty$ and   $u\in L^{p^-}(\Omega)$ such that 
\[
u_{k_j}\ \longrightarrow \ u \quad\text{as} \quad j\to \infty \quad \text{strongly in } \ L^{p^-}(\Omega)  
\]
and
\[
u_{k_j}\ \longrightarrow \ u \quad\text{as} \quad j\to \infty \quad \text{a.e. in } \ \Omega. 
\] 
We extend $u$ by $g$ in $\R^n\setminus\Omega$. Then Fatou's lemma yields 
\[\begin{split}
\iint_{\mathcal C_\Omega}&\frac{1}{p(x,y)}|u(x)-u(y)|^{p(x,y)}K(x,y)\,dydx \\
&\le \liminf_{j\to\infty}\iint_{\mathcal C_\Omega}\frac{1}{p(x,y)}|u_{k_j}(x)-u_{k_j}(y)|^{p(x,y)}K(x,y)\,dydx,
\end{split}\]
that means $u\in \mathbb W^{s(\cdot,\cdot),p(\cdot,\cdot)}(\Omega)$, and it is the minimizer.

Next we further assume $g\in \mathbb{W}^{s(\cdot,\cdot),p(\cdot,\cdot)}(\Omega)\cap L^\infty(\Omega;L^{p(\cdot,\cdot)-1}_{s(\cdot,\cdot)p(\cdot,\cdot)}(\R^n))$, and $p^+-1 < \frac{np^-}{n-s^-p^-}$ or $ s^-p^-\ge n$. Then one can find $t\in (0,s^-)$ such that $tp^-<n$ and $p^+-1 \le \frac{np^-}{n-tp^-}$. Then $u\in W^{t,p^-}(\Omega)$  by Lemma~\ref{lempxpq}, where $u\in \mathbb W^{s(\cdot,\cdot),p(\cdot,\cdot)}(\Omega)$ is the unique minimizer.   Therefore, by the embedding theorem for the fractional Sobolev space \cite[Theorem 6.7]{DPV1} we have that $u\in L^{\frac{np^-}{n-tp^-}}(\Omega)\subset L^{p^+-1}(\Omega)$. This and the fact that $u=g$ a.e. in $\R^n\setminus \Omega$ imply that the minimizer $u$ is in $L^\infty(\Omega;L^{p(\cdot,\cdot)-1}_{s(\cdot,\cdot)p(\cdot,\cdot)}(\R^n))$. 
\end{proof}

\section{\bf Caccioppoli and logarithmic estimates} \label{sec4}

We obtain a Caccioppoli type estimate and a logarithmic estimate for the weak solutions to \eqref{mainPDE}.  In order to obtain a Caccioppoli type estimate for a weak solution to a local/nonlocal equation, we take a multiplicative function of the weak solution and  a cut-off function as a test function in the weak form of the problem. However, in the variable exponent case, that is, $p(\cdot,\cdot)$ is not constant, it is unclear that the multiplicative function can be taken as a test function. More precisely, even if $v\in \mathbb W^{s(\cdot,\cdot),p(\cdot,\cdot)}(\Omega)$, we couldn't prove that $v\eta \in W^{s(\cdot,\cdot),p(\cdot,\cdot)}(\Omega)$, where $\eta$ is a standard cut-off function in a ball contained in $\Omega$. (Note that if $p^-$ and $p^+$ are sufficiently close, it is correct.) To overcome this problem, we will use an approximation argument together with the following lemma.

\begin{lemma}\label{lemueta}
Let $v\in  L^{p^+}(B_{2r})$ satisfy $\varrho_{s(\cdot,\cdot),p(\cdot,\cdot)}(v,B_{2r})<\infty$, and $\eta\in W^{1,\infty}_0(B_r)$. Then $ \varrho_{s(\cdot,\cdot),p(\cdot,\cdot)}(v\eta,\R^n)<\infty$. In particular, $v\eta \in \mathbb W^{s(\cdot,\cdot),p(\cdot,\cdot)}(\Omega)$ whenever $\Omega\supset B_{2r}$.
\end{lemma}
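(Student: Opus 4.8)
The plan is to split the double integral $\varrho_{s(\cdot,\cdot),p(\cdot,\cdot)}(v\eta,\R^n)$ according to the position of $x$ and $y$ relative to the balls $B_r \subset B_{2r}$. Since $\eta$ is supported in $B_r$, the integrand $\frac{|v(x)\eta(x)-v(y)\eta(y)|^{p(x,y)}}{|x-y|^{n+s(x,y)p(x,y)}}$ vanishes identically when both $x$ and $y$ lie outside $B_r$; hence the only regions that contribute are (i) $x,y\in B_{2r}$, and (ii) one of the points, say $x$, lies in $B_r$ while the other, $y$, lies in $\R^n\setminus B_{2r}$ (the symmetric case is handled identically because $p$, $s$ and the domain splitting are symmetric).

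For region (i), I would write $v(x)\eta(x)-v(y)\eta(y) = \eta(x)(v(x)-v(y)) + v(y)(\eta(x)-\eta(y))$ and use the elementary inequality $|a+b|^{p(x,y)}\le 2^{p^+-1}(|a|^{p(x,y)}+|b|^{p(x,y)})$. The first resulting term is bounded by $\|\eta\|_\infty^{p^+}\,\varrho_{s(\cdot,\cdot),p(\cdot,\cdot)}(v,B_{2r})$, which is finite by hypothesis (here one also uses $\|\eta\|_\infty\le\max\{1,\|\eta\|_\infty\}^{p^+}$ to absorb the exponent). For the second term one uses the Lipschitz bound $|\eta(x)-\eta(y)|\le \|\n\eta\|_\infty|x-y|$, so that
\[
\int_{B_{2r}}\!\int_{B_{2r}} \frac{|v(y)|^{p(x,y)}|\eta(x)-\eta(y)|^{p(x,y)}}{|x-y|^{n+s(x,y)p(x,y)}}\,dx\,dy
\le \max\{1,\|\n\eta\|_\infty\}^{p^+}\!\int_{B_{2r}}\!|v(y)|^{p(x,y)}\!\left(\int_{B_{2r}}\!\frac{dx}{|x-y|^{n-(1-s(x,y))p(x,y)}}\right)dy,
\]
and the inner integral is finite and bounded by $c(n,s^\pm,p^\pm)r^{(1-s^+)p^-}$ (or a similar power of $r$) since $(1-s(x,y))p(x,y)\ge(1-s^+)p^->0$; the outer integral is then controlled by $(1+\max|v|^{p^+}\text{-type terms})$, more precisely by $\||v|^{p^+}\|_{L^1(B_{2r})}+|B_{2r}|$ after using $|v(y)|^{p(x,y)}\le 1+|v(y)|^{p^+}$. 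This is where $v\in L^{p^+}(B_{2r})$, rather than merely $L^{p^-_\Omega}$, is used.

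For region (ii), $\eta(y)=0$, so the integrand reduces to $\frac{|v(x)|^{p(x,y)}|\eta(x)|^{p(x,y)}}{|x-y|^{n+s(x,y)p(x,y)}}$ with $x\in B_r$, $y\notin B_{2r}$; since $|x-y|\gtrsim r$ on this set and $|x-y|\gtrsim|y|$ for $|y|$ large, the $y$-integral $\int_{\R^n\setminus B_{2r}}|x-y|^{-n-s(x,y)p(x,y)}\,dy$ is bounded by $c(n,s^-,p^-)r^{-s^-p^-}$ uniformly in $x\in B_r$, and the remaining $\int_{B_r}|v(x)|^{p(x,y)}|\eta(x)|^{p(x,y)}\,dx\le \|\eta\|_\infty^{p^+}(\||v|^{p^+}\|_{L^1(B_r)}+|B_r|)$ is finite, again by $v\in L^{p^+}(B_{2r})$. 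Combining (i) and (ii) gives $\varrho_{s(\cdot,\cdot),p(\cdot,\cdot)}(v\eta,\R^n)<\infty$; since $v\eta\in L^{p^+}(\R^n)\subset L^{p^-_\Omega}(\Omega)$ trivially (it has compact support and is bounded times an $L^{p^+}$ function), the membership $v\eta\in\mathbb W^{s(\cdot,\cdot),p(\cdot,\cdot)}(\Omega)$ for any $\Omega\supset B_{2r}$ follows from the definition, noting $\mathcal C_\Omega\subset(\Omega\times\Omega)\cup(\text{the off-diagonal regions already estimated})$.

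The main obstacle is the commutator-type term $v(y)(\eta(x)-\eta(y))$ in region (i): one must check that trading the singularity exponent $n+s(x,y)p(x,y)$ for $n-(1-s(x,y))p(x,y)$ via the Lipschitz gain genuinely produces a \emph{locally integrable} kernel uniformly in the variable powers, which forces the use of the uniform bounds $s^+<1$ and $p^+<\infty$ to keep the exponent $-(1-s(x,y))p(x,y)$ bounded away from $-n$; and that the leftover factor $|v(y)|^{p(x,y)}$, whose exponent varies, is dominated by an honest $L^1$ function, which is exactly what the hypothesis $v\in L^{p^+}(B_{2r})$ delivers.
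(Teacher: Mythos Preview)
Your proof is correct and essentially identical to the paper's: the same split into $B_{2r}\times B_{2r}$ versus $B_r\times(\R^n\setminus B_{2r})$, the same product-rule decomposition of $v(x)\eta(x)-v(y)\eta(y)$, and the same use of the Lipschitz bound on $\eta$ together with $v\in L^{p^+}(B_{2r})$ to control the commutator term. The only cosmetic difference is in the explicit powers of $r$ for the kernel integrals---the paper writes $\max\{r^{(1-s^+)p^-},r^{(1-s^-)p^+}\}$ and $\max\{r^{-s^-p^-},r^{-s^+p^+}\}$ to cover both $r\le1$ and $r>1$, and your stated exponents $r^{(1-s^+)p^-}$ and $r^{-s^-p^-}$ would need the same adjustment---but this is irrelevant for the finiteness conclusion.
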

\begin{proof}
We have to show that
\[
\varrho_{s(\cdot,\cdot),p(\cdot,\cdot)}(v\eta,\R^n) =\varrho_{s(\cdot,\cdot),p(\cdot,\cdot)}(v\eta,B_{2r}) +2\int_{B_{2r}} \int_{\R^n\setminus B_{2r}} \frac{|v(x)\eta(x)|^{p(x,y)}}{|x-y|^{n+s(x,y) p(x,y)}} \, dy\,dx <\infty.
\]
We first estimate the first term. Using  the Mean Value Theorem, \eqref{s1s2} and \eqref{gamma},
\[\begin{aligned}
&\varrho_{s(\cdot,\cdot),p(\cdot,\cdot)}(v\eta,B_{2r}) \\
&\le c \int_{B_{2r}} \int_{B_{2r}} \frac{|(v(x)-v(y))\eta(y)|^{p(x,y)}}{|x-y|^{n+s(x,y) p(x,y)}} \, dy\,dx+c\int_{B_{2r}} \int_{B_{2r}} \frac{|v(x)(\eta(x)-\eta(y))|^{p(x,y)}}{|x-y|^{n+s(x,y) p(x,y)}} \, dy\,dx \\
&\le c(\|\eta\|_{L^\infty(B_r)}^{p^{+}} +1)\varrho_{s(\cdot,\cdot),p(\cdot,\cdot)}(v,B_{2r})\\
&\qquad+c (\|D\eta\|_{L^\infty(B_r)}^{p^{+}}+1) \int_{B_{2r}} (|v(x)|^{p^+}+1) \int_{B_{4r}(x)} |x-y|^{-n+(1-s(x,y))p(x,y)} \, dy\,dx\\
&\le c(\|\eta\|_{L^\infty(B_r)}^{p^{+}} +1)\varrho_{s(\cdot,\cdot),p(\cdot,\cdot)}(v,B_{2r})\\
&\qquad +c \max\{r^{(1-s^{+})p^{-}},r^{(1-s^{-})p^{+}}\}(\|D\eta\|_{\infty}^{p^{+}}+1) \bigg(\int_{B_r} |v(x)|^{p+}\,dx +r^n\bigg)\\
&<\infty.
\end{aligned}\]
We next estimate the second term. Using the fact that $\eta \equiv 0$ in $\R^n\setminus B_r$, \eqref{s1s2} and \eqref{gamma},
\[\begin{aligned}
&\int_{B_{2r}} \int_{\R^n\setminus B_{2r}} \frac{|v(x)\eta(x)|^{p(x,y)}}{|x-y|^{n+s(x,y) p(x,y)}} \, dy\,dx = (\|\eta\|_\infty^{p^{+}} +1) \int_{B_{r}} \int_{\R^n\setminus B_{2r}} \frac{|v(x)|^{p^+}+1}{|x-y|^{n+s(x,y) p(x,y)}} \, dy\,dx\\
& \le (\|\eta\|_{L^\infty(B_r)}^{p^{+}} +1) \int_{B_{r}} (|v(x)|^{p^+}+1) \int_{\R^n\setminus B_{r}(x)} |x-y|^{-n-s(x,y) p(x,y)} \, dy\,dx\\
& \le c \max\{r^{-s^-p^-},r^{-s^+p^+}\} (\|\eta\|_\infty^{p^{+}} +1) \left(\int_{B_{r}} |v(x)|^{p^+} \,dx+r^n\right) <\infty.
\end{aligned}\]

\end{proof}

Now we  obtain a Caccioppoli type inequality.

\begin{lemma}\label{lemcaccio}
Let $u\in \mathbb W^{s(\cdot,\cdot),p(\cdot,\cdot)}(\Omega)\cap L^\infty(\Omega;L^{p(\cdot,\cdot)-1}_{s(\cdot,\cdot)p(\cdot,\cdot)}(\R^n))$ be a weak solution to \eqref{mainPDE}, and let $B_{2r}=B_{2r}(x_0) \Subset\Omega$ satisfying 
\begin{equation}\label{Rselect2}
r\le \frac{1}{2} \quad \text{and}\quad \omega_{p}(r)\le \frac{(1-s^+)p^-}{2s^+}.
\end{equation}
Then for any $0<\rho<r$ and any $k\in \R$, we have
\begin{equation}\label{caccio}\begin{split}
 &\varrho_{s(\cdot,\cdot),p(\cdot,\cdot)}((u-k)_\pm,B_{\rho})  + \int_{B_{\rho}}(u(x)-k)_\pm\bigg(\int_{\R^n}\frac{(k-u(y))_\pm^{p(x,y)-1}}{|x-y|^{n+s(x,y)p(x,y)}}\,dy\bigg)\,dx  \\
&\le \frac{1}{(r-\rho)^{p_1}} \int_{B_{r}} \int_{B_{r}}  \frac{\max\{(u(x)-k)_\pm,(u(y)-k)_\pm\}^{p(x,y)}}{|x-y|^{n+s(x,y)p(x,y)-p_1}}\,dy\, dx\\
&\qquad + c  \left(\frac{r}{r-\rho}\right)^{n+s_2p_2}  \int_{B_r}  (u(x)-k)_\pm \bigg[\int_{\R^n\setminus B_{r}}\frac{(u(y)-k)_\pm^{p(x,y)-1}}{|y-x_0|^{n+s(x,y)p(x,y)}}\,dy \bigg] \, dx\\
& \le c \frac{r^{p_2-s_2p_2}}{(r-\rho)^{p_2}} \int_{A^\pm(k,r)} \Big((u(x)-k)_\pm^{p_2}+1\Big) \,dx\\
&\qquad + c \frac{r^{n+s_2p_2}}{(r-\rho)^{n+s_2p_2}}   T((u-k)_\pm,x_0,r,r)\int_{B_{r}}(u-k)_\pm \,dx ,
\end{split}\end{equation}
where 
\[
s_1:=s^-_{B_r}, \quad s_2:=s^+_{B_r}, \quad  p_1:=p^-_{B_r} \quad p_2:=p^+_{B_r},
\]
\[
A^\pm(k,r) =A^\pm(k,r,x_0) := \{x\in B_r: (u(x)-k)_\pm>0\},
\]
and the constant $c>0$ depends only on $n$, $s^\pm$, $p^\pm$ and $\Lambda$.
\end{lemma}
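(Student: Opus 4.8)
The plan is to derive \eqref{caccio} by testing the weak form \eqref{weakform} with $\phi = \psi\eta^{p_2}$ where $\psi = (u-k)_\pm$ and $\eta$ is a standard cut-off function supported in $B_r$ with $\eta\equiv 1$ on $B_\rho$, $|D\eta|\lesssim 1/(r-\rho)$. As the authors point out, the subtlety in the variable-exponent setting is that $\psi\eta^{p_2}$ need not a priori lie in $\mathbb W^{s(\cdot,\cdot),p(\cdot,\cdot)}(\Omega)$; to handle this I would first run the argument with truncations $\psi_M := \min\{\psi,M\}$, for which $\psi_M\eta^{p_2}$ is admissible by Lemma~\ref{lemueta} (since $\psi_M\in L^{p^+}$ on $B_{2r}$ and $\varrho_{s(\cdot,\cdot),p(\cdot,\cdot)}(\psi_M,B_{2r})<\infty$), prove the estimate uniformly in $M$, and then pass $M\to\infty$ by monotone/Fatou convergence. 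I will write the proof for the $(u-k)_+$ case; the $(u-k)_-$ case is symmetric.

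The core algebraic step is the pointwise inequality for the integrand. Split $\mathcal C_\Omega$ into the local part $B_r\times B_r$ and the two symmetric nonlocal parts $B_r\times(\R^n\setminus B_r)$. On the local part, with $a(x,y):=|u(x)-u(y)|^{p(x,y)-2}(u(x)-u(y))$ and the test increment $\psi(x)\eta(x)^{p_2}-\psi(y)\eta(y)^{p_2}$, one uses the standard convexity/splitting lemma (this is where Lemma~\ref{lemineq1} enters, applied with the exponent $p(x,y)$) to bound
\[
a(x,y)\big(\psi(x)\eta(x)^{p_2}-\psi(y)\eta(y)^{p_2}\big) \ge c^{-1}\,\frac{|\psi(x)-\psi(y)|^{p(x,y)}\max\{\eta(x),\eta(y)\}^{p_2}}{\text{(denominator)}} - c\,\frac{\max\{\psi(x),\psi(y)\}^{p(x,y)}|\eta(x)-\eta(y)|^{p(x,y)}}{\text{(denominator)}},
\]
where I suppress $K$ and use \eqref{Lambda} to replace it by $|x-y|^{-n-s(x,y)p(x,y)}$ up to $\Lambda$. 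Integrating the good term over $B_\rho\times B_\rho$ (where $\eta\equiv1$) gives $\varrho_{s(\cdot,\cdot),p(\cdot,\cdot)}(\psi,B_\rho)$; the error term, using $|\eta(x)-\eta(y)|\le |D\eta|_\infty|x-y|$ and $|\eta(x)-\eta(y)|\le 1$, is controlled by $(r-\rho)^{-p_1}\iint_{B_r\times B_r}\max\{\psi(x),\psi(y)\}^{p(x,y)}|x-y|^{-n-s(x,y)p(x,y)+p_1}\,dy\,dx$, which is the first term on the right of \eqref{caccio}; here the condition \eqref{Rselect2} on $\omega_p(r)$ is exactly what is needed to guarantee that the exponent $-n-s(x,y)p(x,y)+p_1$ keeps the singular kernel integrable and that $r^{p_1}$ versus $r^{p_2}$ and $(r-\rho)^{p_1}$ versus $(r-\rho)^{p_2}$ comparisons go through after estimating the inner integral $\int_{B_r}|x-y|^{-n+s(x,y)p(x,y)(\cdots)}$.

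On the nonlocal parts, since $\phi(y)=0$ for $y\notin B_r$, the contribution from $B_r\times(\R^n\setminus B_r)$ is $\int_{B_r}\psi(x)\eta(x)^{p_2}\big(\int_{\R^n\setminus B_r}a(x,y)|x-y|^{-n-s(x,y)p(x,y)}\,dy\big)dx$ up to $\Lambda$; writing $a(x,y) = |u(x)-u(y)|^{p(x,y)-2}(u(x)-u(y))$ and using the elementary bound, for $u(x)>k$, that $-a(x,y)\le (k-u(y))_+^{p(x,y)-1} - c^{-1}(u(x)-k)_+^{p(x,y)-1}\,\mathbf 1_{\{u(y)\le u(x)\}}+\dots$ (the same splitting as in \cite{DKP2} but with variable exponent), one extracts on the one hand the nonnegative term $\int_{B_\rho}(u(x)-k)_+\int_{\R^n}(k-u(y))_+^{p(x,y)-1}|x-y|^{-n-s(x,y)p(x,y)}\,dy\,dx$ on the left, and on the other hand the tail term; one then replaces $|x-y|$ by $|y-x_0|$ at the cost of the factor $(r/(r-\rho))^{n+s_2p_2}$ (valid since $x\in B_r$, $y\notin B_r$ and $\eta$ is supported in $B_{(r+\rho)/2}$, say, so that $|x-y|\gtrsim \tfrac{r-\rho}{r}|y-x_0|$), producing the second term of \eqref{caccio} in terms of $T((u-k)_+;x_0,r,r)$. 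Finally, for the last pair of inequalities, bound $\max\{\psi(x),\psi(y)\}^{p(x,y)}\le \psi^{p_2}+1$ pointwise (using $p(x,y)\in[p_1,p_2]$), perform the $dy$-integration to replace $|x-y|^{-n-s(x,y)p(x,y)+p_1}$ by a constant times $r^{p_1-s_2p_2}$ hence (after \eqref{Rselect2}) absorb into $r^{p_2-s_2p_2}$, and similarly bound $(u(y)-k)_+^{p(x,y)-1}$ under the tail by its definition to get $T((u-k)_+;x_0,r,r)\int_{B_r}(u-k)_+\,dx$. The main obstacle is the bookkeeping of variable exponents: ensuring all the kernel integrals converge and that the $r$- and $(r-\rho)$-powers collapse to the clean forms $r^{p_2-s_2p_2}$, $r^{n+s_2p_2}$, $(r-\rho)^{p_2}$, $(r-\rho)^{n+s_2p_2}$ stated in \eqref{caccio} — this is precisely where \eqref{Rselect2} and the monotonicity $p_1\le p(x,y)\le p_2$, $s_1\le s(x,y)\le s_2$ are used repeatedly; the approximation step ($\psi_M\to\psi$) is routine once the uniform-in-$M$ bound is in hand.
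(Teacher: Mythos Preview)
Your proposal is correct and follows essentially the same route as the paper: truncate $(u-k)_+$ (the paper uses $w^l_+:=(\min\{u,l\}-k)_+$, which coincides with your $\psi_M$ for $M=l-k$), invoke Lemma~\ref{lemueta} to justify the test function, split into the local piece on $B_r\times B_r$ and the nonlocal piece on $B_r\times(\R^n\setminus B_r)$, use Lemma~\ref{lemineq1} to obtain the pointwise lower bound, and then pass to the limit. Two small remarks: (i) in the good term of your pointwise inequality the weight should be $\min\{\eta(x),\eta(y)\}^{p_2}$, not $\max$; this is harmless since you only use the inequality on $B_\rho\times B_\rho$ where $\eta\equiv 1$; (ii) the paper applies Lemma~\ref{lemineq1} with the fixed exponent $p_1$ (to $a=\eta(y)^{p_2/p_1}$, $b=\eta(x)^{p_2/p_1}$) so that the error factor $|x-y|^{p_1}/(r-\rho)^{p_1}$ appears directly, whereas you apply it with $p(x,y)$ and then recover the $p_1$ power via the dual bound $|\eta(x)-\eta(y)|\le\min\{|D\eta|_\infty|x-y|,1\}$ --- both arguments are valid and yield the same first right-hand side in \eqref{caccio}.
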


\begin{proof}
We prove the desired estimate only for  ``$+$" sign. Then the estimate for ``$-$" sign directly follows by considering $-u$ that is also a weak solution to \eqref{mainPDE}.  
Let  $\eta\in C^\infty_c(B_{\frac{r+\rho}{2}})$ with $0\le \eta \le 1$, $\eta\equiv 1$ in $B_{\rho}$ and $|\n \eta| \le 4/(r-\rho)$ in $\R^n$. Define $w_{\pm}:=(u-k)_{\pm}$ and $w^l_+:=(\min\{u,l\}-k)_{+}$, where $l>k$. Note that $w^l_+(x)\le w_+(x)$ and $|w^l_+(x)-w^l_+(y)|\le |w_+(x)-w_+(y)|$ for every $x,y\in \R^n$ and every $l>k$. By Lemma~\ref{lemueta}, $w^l_+\eta^{p_2} \in \mathbb W^{s(\cdot,\cdot),p(\cdot,\cdot)}(\Omega)$ for every $l>k$ since $0\le w^l_+\le {l-k}$ and $\varrho_{s(\cdot,\cdot),p(\cdot,\cdot)}(w^l_+,B_{2r})\le \varrho_{s(\cdot,\cdot),p(\cdot,\cdot)}(w_+,B_{2r})<\infty$. Hence  we take  $w^l_+\eta^{p_2}$ as a test function $\phi$ in \eqref{weakform}, and obtain an estimate for $w^l_k$ first. Then by taking the limit as $l\to\infty$, we obtain the desired estimate.

Recalling  \eqref{weakform} with $\phi=w_+^l\eta^{p_2} $,
\[\begin{split}
0&= \int_{B_{r}}\int_{B_{r}} |u(x)-u(y)|^{p(x,y)-2}(u(x)-u(y)) (\phi(x)-\phi(y))K(x,y)\,dy\, dx \\
&\quad  + 2 \int_{B_r}\int_{\R^n\setminus B_{r}} |u(x)-u(y)|^{p(x,y)-2}(u(x)-u(y)) (\phi(x)-\phi(y))K(x,y)\,dy\, dx\\
&= : I_1 +2 I_2.
\end{split}\]
We first estimate $I_1$. We first have  
\begin{equation}\label{eq1}
|u(x)-u(y)|^{p(x,y)-2}(u(x)-u(y)) (\phi(x)-\phi(y))=0, \quad \text{ for  }\  x,y\in B_r\setminus A^+(k,r).
\end{equation}
If $x\in A^+(k,r)$ and $y\in B_{r}\setminus A^+(k,r)$, then   $u(x)> k \ge u(y)$ hence
\begin{equation}\label{eq2}\begin{aligned}
&|u(x)-u(y)|^{p(x,y)-2}(u(x)-u(y))  (\phi(x)-\phi(y))\\
&\ge (w^l_+(x)+w_-(y))^{p(x,y)-1} w^l_+(x)\eta(x)^{p_2}\\
&\ge \frac{1}{2}| w^l_+(x)-w^l_+(y)|^{p(x,y)}\eta(x)^{p_2 }+\frac{1}{2} w_-(y)^{p(x,y)-1}w^l_+(x)\eta(x)^{p_2 }.
\end{aligned}\end{equation}
Now we suppose $x,y\in A^+(k,r)$. If  $u(x)> u(y)>k$ and $\eta(x)^{p_2}\ge \eta(y)^{p_2}$, 
\[\begin{split}
&|u(x)-u(y)|^{p(x,y)-2}(u(x)-u(y)) (\phi(x)-\phi(y))\\
& =|w_+(x)-w_+(y)|^{p(x,y)-1}(w^l_+(x)\eta(x)^{p_2}-w^l_+(y)\eta(y)^{p_2})\\
&\ge |w^l_+(x)-w^l_+(y)|^{p(x,y)}\eta(x)^{p_2}.
\end{split}\]
If $u(x)> u(y)>k$ and $\eta(x)^{p_2}< \eta(y)^{p_2}$,
\[\begin{split}
&|u(x)-u(y)|^{p(x,y)-2}(u(x)-u(y)) (\phi(x)-\phi(y))\\
& \ge (w^l_+(x)-w^l_+(y))^{p(x,y)}\eta(y)^{p_2} - (w_+(x)-w_+(y))^{p(x,y)-1}w^l_+(x) (\eta(y)^{p_2}-\eta(x)^{p_2}).
\end{split}\]
Applying Lemma~\ref{lemineq1} with $p=p_1$, $a=\eta(y)^{\frac{p_2}{p_1}}$, $b=\eta(x)^{\frac{p_2}{p_1}}$,  and $\epsilon=\frac{1}{2}\frac{w_+(x)-w_+(y)}{w_+(x)}$, and using the facts that $0\le \eta\le 1$ and $| \eta(y)- \eta(x)|\le \|\n \eta\|_{L^\infty(B_r)}|x-y|\le c\frac{|x-y|}{r-\rho}$,
\[\begin{split}
\eta(y)^{p_2}-\eta(x)^{p_2} & \le \frac{1}{2} \frac{w_+(x)-w_+(y)}{w_+(x)} \eta(y)^{p_2} +\left\{\frac{(p_1-1)w_+(x)}{w_+(x)-w_+(y)}\right\}^{p_1-1} \left(\eta(y)^{\frac{p_2}{p_1}}-\eta(x)^{\frac{p_2}{p_1}}\right)^{p_1}\\
& \le \frac{1}{2}\frac{w_+(x)-w_+(y)}{w_+(x)}  \eta(y)^{p_2} +c \left(\frac{w_+(x)}{w_+(x)-w_+(y)}\right)^{p(x,y)-1} \frac{|x-y|^{p_1}}{(r-\rho)^{p_1}},
\end{split}\]
hence
\[\begin{split}
&|u(x)-u(y)|^{p(x,y)-2}(u(x)-u(y)) (\phi(x)-\phi(y))\\
& \ge \frac12 |w^l_+(x)-w^l_+(y)|^{p(x,y)}\eta(y)^{p_2} - c w_+(x)^{p(x,y)} \frac{|x-y|^{p_1}}{(r-\rho)^{p_1}}\\
& \qquad - \frac{1}{2}\left(|w_+(x)-w_+(y)|^{p(x,y)} -|w^l_+(x)-w^l_+(y)|^{p(x,y)}\right)\eta(y)^{p_2}.
\end{split}\]
Therefore, by considering the symmetry of  $p(\cdot,\cdot)$ and $s(\cdot,\cdot)$, we have that for every $x,y\in A^+(k,r)$,
\begin{equation}\label{eq3}\begin{split}
&|u(x)-u(y)|^{p(x,y)-2}(u(x)-u(y)) (\phi(x)-\phi(y))\\
& \ge \frac12 |w^l_+(x)-w^l_+(y)|^{p(x,y)}\min\{\eta(x), \eta(y)\}^{p_2}\\
&\qquad - c \max \{w_+(x),w_+(y) \}^{p(x,y)} \frac{|x-y|^{p_1}}{(r-\rho)^{p_1}}\\
& \qquad - \frac{1}{2}\left(|w_+(x)-w_+(y)|^{p(x,y)} -|w^l_+(x)-w^l_+(y)|^{p(x,y)}\right)\max \{ \eta(x),\eta(y)\}^{p_2}.
\end{split}\end{equation}
Combining the results in \eqref{eq1}, \eqref{eq2} and \eqref{eq3}, we get
\[\begin{split}
I_1 &\ge \frac{1}{2} \left(\varrho_{s(\cdot,\cdot),p(\cdot,\cdot)}(w^l_+,B_{\rho})  + \int_{B_{\rho}}w_+(x)\left[\int_{B_{r}}\frac{w_-(y)^{p(x,y)-1}}{|x-y|^{n+s(x,y)p(x,y)}}\,dy\right]\,dx\right) \\
& \qquad - \frac{c}{(r-\rho)^{p_1}} \int_{B_{r}} \int_{B_{r}}  \frac{\max\{w_+(x),w_+(y)\}^{p(x,y)}}{|x-y|^{n+s(x,y)p(x,y)-p_1}}\,dy\, dx\\
& \qquad  - \frac{1}{2}\left(\varrho_{s(\cdot,\cdot),p(\cdot,\cdot)}(w_+,B_{r}) -\varrho_{s(\cdot,\cdot),p(\cdot,\cdot)}(w^l_+,B_{r})\right).
\end{split}\]

On the other hand,
\[\begin{split}
I_2&\ge \int_{A^+(k,\rho)}  w^l_+(x) \bigg[\int_{\{u(x)\ge u(y)\}\setminus B_{r}}\frac{(u(x)-u(y))^{p(x,y)-1}}{|x-y|^{n+s(x,y)p(x,y)}}\,dy \bigg] \, dx\\
&\qquad -  \int_{A^+(k,\frac{r+\rho}{2})}  w_+(x) \left[\int_{\{u(x) < u(y)\}\setminus B_{r}}\frac{(u(y)-u(x))^{p(x,y)-1}}{|x-y|^{n+s(x,y)p(x,y)}}\,dy \right] \, dx\\
&\ge \int_{B_\rho} w^l_+(x) \bigg[\int_{\R^n\setminus B_{r}}\frac{ w_-(y)^{p(x,y)-1}}{|x-y|^{n+s(x,y)p(x,y)}}\,dy \bigg] \, dx\\
&\qquad - c \left(\frac{r}{r-\rho}\right)^{n+s_2p_2}  \int_{B_r}  w_+(x) \bigg[\int_{\R^n\setminus B_{r}}\frac{w_+(y)^{p(x,y)-1}}{|y-x_0|^{n+s(x,y)p(x,y)}}\,dy \bigg] \, dx.
\end{split}\]
Here we  used the fact that 
\[
|x-y| \ge |y-x_0| -|x-x_0| \ge  |y-x_0| - \frac{r+\rho}{2} \frac{|y-x_0|}{r} =\frac{r-\rho}{2r}  |y-x_0|
\]
for every $x\in B_{\frac{r+\rho}{2}}$ and $y\in \R^n\setminus B_{r}$, where $x_0$ is the center of $B_r$.

Therefore, from the above estimates for $I_1$ and $I_2$ with $I_1+2I_2=0$, we have that for every $l>k$,
\[\begin{split}
 &\varrho_{s(\cdot,\cdot),p(\cdot,\cdot)}(w^l_+,B_{\rho})  + \int_{B_{\rho}}w^l_+(x)\bigg[\int_{\R^n}\frac{w_-(y)^{p(x,y)-1}}{|x-y|^{n+s(x,y)p(x,y)}}\,dy\bigg]\,dx  \\
&\le \frac{c}{(r-\rho)^{p_1}} \int_{B_{r}} \int_{B_{r}}  \frac{\max\{w_+(x),w_+(y)\}^{p(x,y)}}{|x-y|^{n+s(x,y)p(x,y)-p_1}}\,dy\, dx\\
&\qquad + c  \left(\frac{r}{r-\rho}\right)^{n+s_2p_2}  \int_{B_r}  w_+(x) \bigg[\int_{\R^n\setminus B_{r}}\frac{w_-(y)^{p(x,y)-1}}{|y-x_0|^{n+s(x,y)p(x,y)}}\,dy \bigg] \, dx\\
&\qquad +c \left(\varrho_{s(\cdot,\cdot),p(\cdot,\cdot)}(w_+,B_{r}) -\varrho_{s(\cdot,\cdot),p(\cdot,\cdot)}(w^l_+,B_{r})\right).
\end{split}\]
Since $w^l_+(x) \nearrow w_+(x)$ and $|w^l_+(x)-w^l_+(y)|\nearrow |w_+(x)-w_+(y)|$ as $l\to \infty$ for a.e. $x,y\in B_r$, by the Lebesgue monotone convergence theorem, sending $l$ in the above estimate to $\infty$, 
we obtain the first inequality in \eqref{caccio}. 

The second inequality in \eqref{caccio} is obtained from the definition the tail and the following estimates: since $p_1-s_2p_2\ge (1-s^+)p^--s^+\omega_{p}(r) \ge \frac{(1-s^+)p^-}{2} >0$ by \eqref{Rselect2},
\[\begin{split}
&\frac{1}{(r-\rho)^{p_1}} \int_{B_{r}} \int_{B_{r}}  \frac{\max\{w_+(x),w_+(y)\}^{p(x,y)}}{|x-y|^{n+s(x,y)p(x,y)-p_1}}\,dy\, dx\\
&\le \frac{r^{p_2}}{r^{p_1}(r-\rho)^{p_2}} \int_{B_{r}} \int_{B_{r}} \frac{\max\{w_+(x),w_+(y)\}^{p(x,y)} }{|x-y|^{n+s_2p_2-p_1}}\,dy\, dx\\
&\le \frac{r^{p_2}}{r^{p_1}(r-\rho)^{p_2}} \int_{A^+(k,r)} \left(w_+(x)^{p_2}+1\right) \int_{B_{2r}(x)} \frac{1}{|x-y|^{n+s_2p_2-p_1}}\,dy\,dx\\
&\le  c \frac{r^{p_2-s_2p_2}}{(r-\rho)^{p_2}} \int_{A^+(k,r)} \left(w_+(x)^{p_2}+1\right) \,dx. 
\end{split}\]
\end{proof}

Next we obtain logarithmic estimates that will be used crucially in the proof of the H\"older regularity in Section~\ref{sec5}. Here we consider a locally bounded and nonnegative solution.

\begin{lemma} \label{lemlog}
Let $u\in\mathbb W^{s(\cdot,\cdot),p(\cdot,\cdot)}(\Omega)\cap L^\infty(\Omega;L^{p(\cdot,\cdot)-1}_{s(\cdot,\cdot)p(\cdot,\cdot)}(\R^n))$ be a weak  solution to \eqref{mainPDE}
such that $u\in L^\infty(\Omega')$ for some $\Omega'\Subset\Omega$. If $r\in(0,1)$ satisfies
\begin{equation}\label{Rselect5}
\omega_p(r)\le \frac{\ln2}{\ln(1+\|u\|_{L^\infty(\Omega')})},
\end{equation}
$u\ge 0$ in  $B_r=B_r(x_0)\subset \Omega'$ and $d>0$, then for any $B_\rho=B_\rho(x_0)$ with $0<\rho< r/2$
\[\begin{split}
 &\int_{B_\rho}\int_{B_\rho} \left|\ln\bigg(\frac{d+u(x)}{d+u(y)}\bigg)\right|^{p_2}K(x,y)\,dx\,dy  \\
&\le c \rho^{n-s_2p_1+p_1-p_2}+ c \max\{d,d^{-1}\}^{p_4-p_3}  \rho^{n-s_4p_4} + \frac{c \rho^n}{d^{p_2-1}}T (u_-+\|u\|_{L^\infty(B_r)},r,2\rho)
\end{split}\]
for some $c=c(n,s^\pm,p^\pm,\Lambda)>0$, where
\begin{equation}\label{si}
s_1:=s^-_{B_{2\rho}},\quad s_2:=s^+_{B_{2\rho}},\quad s_3:=s^-_{B_{r}},\quad s_4:=s^+_{B_{r}},
\end{equation}
\begin{equation}\label{pi}
p_1:=p^-_{B_{2\rho}}, \quad p_2:=p^+_{B_{2\rho}}, \quad p_3:=p^-_{B_{r}},\quad p_4:=p^+_{B_{r}}.
\end{equation}
\end{lemma}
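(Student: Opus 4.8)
The plan is to test the weak formulation \eqref{weakform} with $\phi = \eta^{p_2}(d+u)^{1-p(\cdot,\cdot)}$-type function localized in $B_{2\rho}$, following the classical logarithmic-estimate strategy of \cite{DKP2} but being careful with the variable exponent. Concretely, let $\eta\in C^\infty_c(B_{3\rho/2})$ with $0\le\eta\le1$, $\eta\equiv1$ on $B_\rho$ and $|\n\eta|\le c/\rho$, and take as test function $\phi(x) = \eta(x)^{p_2}\,(d+u(x))^{1-\sigma}$ where I will eventually want $\sigma$ close to $p(x,y)$ on the diagonal; since $p(\cdot,\cdot)$ is not constant, one must either freeze $\sigma = p_2 = p^+_{B_{2\rho}}$ (or $p_1$) and absorb the mismatch, or argue pointwise. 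One checks $\phi\in\mathbb W^{s(\cdot,\cdot),p(\cdot,\cdot)}(\Omega)$ via Lemma~\ref{lemueta} (noting $d+u$ is bounded below by $d$ and above by $d+\|u\|_{L^\infty(\Omega')}$ on the support of $\eta$, so it and its reciprocal powers are Lipschitz there), and $\phi=0$ outside $B_{3\rho/2}\Subset\Omega$.

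The core is an algebraic pointwise inequality: for $a=d+u(x)$, $b=d+u(y)$ with, say, $a\ge b>0$, one estimates
\[
|a-b|^{p(x,y)-2}(a-b)\big(\eta(x)^{p_2}a^{1-\sigma}-\eta(y)^{p_2}b^{1-\sigma}\big)
\]
from below. Splitting $\eta(x)^{p_2}a^{1-\sigma}-\eta(y)^{p_2}b^{1-\sigma} = \eta(y)^{p_2}(a^{1-\sigma}-b^{1-\sigma}) + a^{1-\sigma}(\eta(x)^{p_2}-\eta(y)^{p_2})$, the first piece is negative with size comparable to $\min\{\eta\}^{p_2}\,(a-b)^{p(x,y)-1}(b^{1-\sigma}-a^{1-\sigma})$, which after using concavity of $t\mapsto t^{1-\sigma}$ and the elementary bound $|\log(a/b)|^{p_2}\lesssim \big(\frac{a-b}{b}\big)^{p(x,y)-1}(b^{1-\sigma}-a^{1-\sigma})b^{\sigma-1}\cdot(\dots)$ produces the logarithmic integrand $|\log\frac{a}{b}|^{p_2}\min\{\eta(x),\eta(y)\}^{p_2}$; the second piece is an error handled by Lemma~\ref{lemineq1} (with $\epsilon$ proportional to $(a-b)/a$), producing a term controlled by $\frac{|x-y|^{p_1}}{\rho^{p_1}}$ times a bounded power of $d+u$. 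The variable-exponent bookkeeping enters here: powers like $(d+u)^{p(x,y)-\sigma}$ must be bounded using $0\le p(x,y)-\sigma$ or $|p(x,y)-\sigma|\le\omega_p(\rho)$ together with \eqref{Rselect5}, i.e. $(1+\|u\|_{L^\infty(\Omega')})^{\omega_p}\le 2$, exactly so that these spurious factors stay bounded — this is what \eqref{Rselect5} is for. Assembling the $B_{2\rho}\times B_{2\rho}$ contributions gives the main term on the left plus the error $\rho^{n-s_2p_1+p_1-p_2}$ after integrating $\int_{B_{2\rho}}\int_{B_{2\rho}}|x-y|^{-n-s(x,y)p(x,y)+p_1}\,dy\,dx\lesssim \rho^{-s_2p_1+p_1}\cdot\rho^{n-p_2}$ roughly (with the exponent $p_1-p_2$ coming from replacing $\rho^{p_1}$ in the denominator of the $\eta$-error by $\rho^{p_2}$).

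The nonlocal part $\int_{B_{3\rho/2}}\int_{\R^n\setminus B_{2\rho}}$ is handled exactly as in \cite{DKP2}: there $\phi(y)=0$, so the contribution is $\pm\int |u(x)-u(y)|^{p-2}(u(x)-u(y))\eta(x)^{p_2}(d+u(x))^{1-\sigma}K(x,y)$, and bounding $(u(x)-u(y))^{p(x,y)-1}\lesssim (u_-(y)+\|u\|_{L^\infty(B_r)})^{p(x,y)-1}+(\text{bounded})$ together with $(d+u(x))^{1-\sigma}\le d^{1-p_2}$ (using $\sigma\le p_2$ and $d+u(x)\ge d$) and $|x-y|^{-n-sp}\lesssim |y-x_0|^{-n-sp}$ for $x\in B_{3\rho/2}$, $y\notin B_{2\rho}$, yields the term $\frac{\rho^n}{d^{p_2-1}}T(u_-+\|u\|_{L^\infty(B_r)},r,2\rho)$ after also splitting off the annulus $B_r\setminus B_{2\rho}$ which contributes the $\max\{d,d^{-1}\}^{p_4-p_3}\rho^{n-s_4p_4}$ term. \textbf{The main obstacle} is the variable-exponent algebra in the diagonal term: one cannot simply reduce to a constant power $p$, so every pointwise estimate must carry along factors $(d+u)^{p(x,y)-\text{const}}$ and reciprocal powers, and these are kept under control only through the precise smallness condition \eqref{Rselect5} on $\omega_p(r)$; choosing the right frozen exponent $\sigma$ (I expect $\sigma=p_2$ works, so that $p(x,y)-\sigma\le0$ gives the favorable direction on the support of $\eta$) and tracking which estimates need $p_1$ versus $p_2$ in the exponents of $\rho$ is the delicate part.
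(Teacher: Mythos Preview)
Your proposal is correct and follows essentially the same route as the paper: the paper also tests with $\phi=(u+d)^{1-p_2}\eta^{p_2}$ (so your guess $\sigma=p_2$ is exactly right), verifies admissibility via Lemma~\ref{lemueta}, handles the $\eta$-difference error through Lemma~\ref{lemineq1} with $\epsilon$ proportional to $(u(x)-u(y))/(u(x)+d)$, controls the resulting factors $(u+d)^{p(x,y)-p_2}$ and $(u(x)-u(y))^{p_2-p(x,y)}$ by \eqref{Rselect5}, and splits the off-diagonal integral at $\partial B_r$ to produce the annulus term $\max\{d,d^{-1}\}^{p_4-p_3}\rho^{n-s_4p_4}$ and the tail term. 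The only cosmetic difference is that the paper obtains the logarithm pointwise by a case split on whether $\tfrac{u(y)+d}{u(x)+d}$ is below or above $\tfrac12$ (using $\tfrac{1-t^{1-p}}{1-t}\le -(p-1)$ and its refinement), rather than the concavity phrasing you sketch; the outcome and the error exponents are identical.
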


\begin{proof}
Let $\eta \in C^\infty_0(B_{3\rho/2})$ such that
$0\le \eta \le 1$, $\eta\equiv 1$ in $B_\rho$ and $|D\eta|\le c(n) /\rho$.  Note that, since the Mean Value Theorem yields 
$|(u(x)+d)^{1-p_2}-(u(y)+d)^{1-p_2}|\le (p^+-1)\max\{1,d^{-1}\}^{p^+}|u(x)-u(y)|$  and $0<(u+d)^{1-p_2}\le\min\{1,d\}^{1-p^+}$ for every $x,y\in B_{2\rho}$, by Lemma~\ref{lemueta}  we can take $(u+d)^{1-p_2}\eta^{p_2}$ as a test function in \eqref{weakform}. Hence,
\[\begin{split}
0&=\int_{B_{2\rho}}\int_{B_{2\rho}} |u(x)-u(y)|^{p(x,y)-2}(u(x)-u(y))\\
&\hspace{5cm}\times \left[\frac{\eta(x)^{p_2}}{(u(x)+d)^{p_2-1}}-\frac{\eta(y)^{p_2}}{(u(y)+d)^{p_2-1}}\right] K(x,y)\,dx\,dy\\
&\qquad +2 \int_{B_{2\rho}} \int_{\R^n\setminus B_{2\rho}}\frac{ |u(x)-u(y)|^{p(x,y)-2}(u(x)-u(y)) \eta(x)^{p_2}}{(u(x)+d)^{p_2-1}} K(x,y)\,dy\,dx \\
&=: I_1+2I_2.
\end{split}\]

We first estimate $I_1$. Suppose $u(x)>u(y)$ for $x,y\in B_{2\rho}$. By the second inequality in Lemma~\ref{lemineq1} with $a=\eta(x)^\frac{p_2}{p(x,y)}$, $b=\eta(y)^{\frac{p_2}{p(x,y)}}$, $p=p(x,y)$ and $
\epsilon=\delta \frac{u(x)-u(y)}{u(x)+d}\in (0,1)$,
we have that for any  $\delta\in (0,1)$, 
\[\begin{split}
\eta(x)^{p_2} & \le  \eta(y)^{p_2} + \delta \frac{u(x)-u(y)}{u(x)+d} \eta(y)^{p_2} \\
&\qquad +  c \left(\delta \frac{u(x)-u(y)}{u(x)+d}\right)^{1-p(x,y)} |\eta(x)^\frac{p_2}{p(x,y)}-\eta(y)^\frac{p_2}{p(x,y)}|^{p(x,y)}
\end{split}\]
(if $\eta(x)\le \eta(y)$ the inequality is trivial). Using this inequality,
\[\begin{split}
&|u(x)-u(y)|^{p(x,y)-2}(u(x)-u(y))\left[\frac{\eta(x)^{p_2}}{(u(x)+d)^{p_2-1}}-\frac{\eta(y)^{p_2}}{(u(y)+d)^{p_2-1}}\right] K(x,y)\\
&\le \underbrace{\frac{(u(x)-u(y))^{p(x,y)-1}}{(u(x)+d)^{p_2-1}}\eta(y)^{p_2} \left[1+\delta\frac{u(x)-u(y)}{u(x)+d}-\left(\frac{u(x)+d}{u(y)+d}\right)^{p_2-1}\right]K(x,y)}_{=:J}\\
&\qquad+ c \delta^{1-p(x,y)} (u(x)+d)^{p(x,y)-p_2} |\eta(x)^\frac{p_2}{p(x,y)}-\eta(y)^\frac{p_2}{p(x,y)}|^{p(x,y)} K(x,y).
\end{split}\]
Now we estimate 
\[
J= \frac{(u(x)-u(y))^{p(x,y)}}{(u(x)+d)^{p_2}}\eta(y)^{p_2} \left[\delta+\tfrac{1-(\frac{u(y)+d}{u(x)+d})^{1-p_2}}{1-\frac{u(y)+d}{u(x)+d}}\right]K(x,y).
\]
Then we consider $g(t):= \frac{1-t^{1-p}}{1-t}$, where $p>1$ and $t\in(0,1)$. Note that with constant $p>1$
\begin{equation}\label{logpf0}
\frac{1-t^{1-p}}{1-t}\le -(p-1)\quad ^\forall t\in(0,1) \quad\text{and}\quad  \frac{1-t^{1-p}}{1-t}\le -\frac{p-1}{2^p}\frac{t^{1-p}}{1-t} \quad ^\forall  t\in(0,\tfrac12].
\end{equation}
If $\frac{u(y)+d}{u(x)+d}\in(0,\tfrac12]$, by the second inequality in \eqref{logpf0},
\[\begin{aligned}
\frac{1-(\frac{u(y)+d}{u(x)+d})^{1-p_2}}{1-\frac{u(y)+d}{u(x)+d}} & \le  - \frac{p_2-1}{2^{p_2}} \frac{(\frac{u(y)+d}{u(x)+d})^{1-p_2}}{1-\frac{u(y)+d}{u(x)+d}} = -\frac{(p_2-1)(u(x)+d)^{p_2}}{2^{p_2}(u(x)-u(y))(u(y)+d)^{p_2-1}} \\
&\le  -\frac{(p_2-1)}{2^{p_2}}\left(\frac{u(x)+d}{u(y)+d}\right)^{p_2-1}\le -\frac{p^--1}{2}, 
\end{aligned}\]
hence, using this inequality and \eqref{Rselect5} and choosing $\delta\le \frac{p^--1}{4}$ ,
\[\begin{split}
J & \le  -\frac{p^--1}{4}\frac{1}{(u(x)-u(y))^{p_2-p(x,y)}}\left(\frac{u(x)-u(y)}{u(y)+d}\right)^{p_2}\\
&\le -\frac{c}{(1+\|u\|_{L^\infty(\Omega')})^{\omega_p(r)}} \left(\frac{u(x)+d}{2(u(y)+d)}\right)^{p_2}\eta(y)^{p_2} K(x,y)\\
&\le   -c\left|\ln\left(\frac{u(x)+d}{u(y)+d}\right)\right|^{p_2}\min\{\eta(x),\eta(y)\}^{p_2} K(x,y).
\end{split}\]
On the other hand, if $\frac{u(y)+d}{u(x)+d}\in[\tfrac12,1)$, 
\[
\frac{u(x)-u(y)}{u(x)+d} \ge \frac{1}{2} \frac{u(x)-u(y)}{u(y)+d}  \ge \frac{1}{2} \ln \left(1+\frac{u(x)-u(y)}{u(y)+d}\right) = \frac{1}{2} \ln \left(\frac{u(x)+d}{u(y)+d}\right),
\]
hence, using this inequality, the first inequality in \eqref{logpf0} and $(u(x)-u(y))^{p_2-p(x,y)}\le (1+2\|u\|_{L^\infty(\Omega')})^{\omega_p(r)}\le c$ by  \eqref{Rselect5}, and choosing $\delta\le \frac{p^--1}{2}$, 
\[
J \le - c \left|\ln \left(\frac{u(x)+d}{u(y)+d}\right)\right|^{p_2} \min\{\eta(x),\eta(y)\}^{p_2} K(x,y).
\]
By the symmetry of $p(\cdot,\cdot)$ and $K(\cdot,\cdot)$, we have the same estimate for $J$ when $u(y)> u(x)$. Therefore, we have 
\begin{equation}\label{logpf2}\begin{split}
I_1 & \le -c\int_{B_{2\rho}}\int_{B_{2\rho}} \left|\ln \left(\frac{u(x)+d}{u(y)+d}\right)\right|^{p_2} \min\{\eta(x),\eta(y)\}^{p_2} K(x,y) \,dy\,dx  \\
&\qquad  +c \int_{B_{2\rho}}\int_{B_{2\rho}} |\eta(x)^\frac{p_2}{p(x,y)}-\eta(y)^\frac{p_2}{p(x,y)}|^{p(x,y)}K(x,y) \,dy\,dx.
\end{split}\end{equation}
We further estimate the second integral on the right hand side. Since $|\eta(x)^\frac{p_2}{p(x,y)}-\eta(y)^\frac{p_2}{p(x,y)}|\le c \rho^{-1}|x-y|$ for every $x,y\in B_{2\rho}$,  
\begin{equation}\label{logpf3}\begin{split}
&\int_{B_{2\rho}}\int_{B_{2\rho}}   |\eta(x)^\frac{p_2}{p(x,y)}-\eta(y)^\frac{p_2}{p(x,y)}|^{p(x,y)} K(x,y)\,dy\,dx\\
&  \le c \rho^{-p_2} \int_{B_{2\rho}}\int_{B_{2\rho}} |x-y|^{-n+(1-s(x,y))p(x,y)}\,dy\,dx \\
&  \le c \rho^{-p_2} \int_{B_{2\rho}}\int_{B_{4\rho}(x)} |x-y|^{-n+(1-s_2)p_1}\,dy\,dx \\
& \le c \rho^{n-s_2p_1+p_1-p_2} . 
\end{split}\end{equation}

We next estimate 
\[
I_2 \le \int_{B_{2\rho}} \int_{\R^n\setminus B_{2\rho}}\frac{ (u(x)-u(y))_+^{p(x,y)-1}  \eta(x)^{p_2}}{(u(x)+d)^{p_2-1}} K(x,y)\,dy\,dx.
\]
Since $u\ge 0$ in $B_r$, for  $x\in B_{2\rho}$  and $y\in B_r$
\[\begin{split}
&\frac{ (u(x)-u(y))_+^{p(x,y)-1} }{(u(x)+d)^{p_2-1}}  \le (u(x)+d)^{p(x,y)-p_2}\\
& \le 
\begin{cases}
[(1+\|u\|_{L^\infty(\Omega')})\max\{1,d\}]^{p_4-p_3} \overset{\eqref{Rselect5}}{\le} c \max\{1,d\}^{p_4-p_3}\ \ \text{if}\ \ p(x,y)>p_2 \\
\max\{1,d^{-1}\}^{p_4-p_3}\ \ \text{if}\ \ p(x,y)\le p_2
\end{cases}\\
& \le c \max\{d,d^{-1}\}^{p_4-p_3}
\end{split}\]
and for $x\in B_{2\rho}$ and $y\in \R^n\setminus B_r$
\[
\frac{ (u(x)-u(y))_+^{p(x,y)-1} }{(u(x)+d)^{p_2-1}} \le  \frac{ (\|u\|_{L^\infty(B_r)}+(u(y))_-)^{p(x,y)-1}}{d^{p_2-1}}.
\]
Therefore,
\[ \begin{split}
&I_2  \le c  \max\{d,d^{-1}\}^{p_4-p_3} \int_{B_{2\rho}} \int_{B_r\setminus B_{2\rho}} \frac{\eta(x)^{p_2}}{|x-y|^{n+s(x,y)p(x,y)}} \,dy\,dx \\
&\qquad + c\int_{B_{2\rho}} \int_{\R^n\setminus B_{r}}\frac{ [\|u\|_{L^\infty(B_r)}+(u(y))_-]^{p(x,y)-1}}{d^{p_2-1}|x-y|^{n+s(x,y)p(x,y)}} \,dy\,dx\\
& \le  c\max\{d,d^{-1}\}^{p_4-p_3} \int_{B_{\frac{3\rho}{2}}} \int_{B_{r}\setminus B_{\frac\rho2}(x)} \frac{1}{|x-y|^{n+s_4p_4}} \,dy\,dx+ \frac{c \rho^n}{d^{p_2-1}} T (u_-+\|u\|_{L^\infty(B_r)},r,2\rho) \\
&\le   c \max\{d,d^{-1}\}^{p_4-p_3}  \rho^{n-s_4p_4} + \frac{c \rho^n}{d^{p_2-1}}T (u_-+\|u\|_{L^\infty(B_r)},r,2\rho).
\end{split}\]

Finally, combining \eqref{logpf2}, \eqref{logpf3} and the preceding inequality with $I_1+2I_2=0$, we get the desired estimate.
\end{proof}

The preceding lemma directly implies the next result.
\begin{lemma}\label{corlog}
Let $u\in\mathbb W^{s(\cdot,\cdot),p(\cdot,\cdot)}(\Omega)\cap L^\infty(\Omega;L^{p(\cdot,\cdot)-1}_{s(\cdot,\cdot)p(\cdot,\cdot)}(\R^n))$ be a weak  solution to \eqref{mainPDE}
such that $u\in L^\infty(\Omega')$ for some $\Omega'\Subset\Omega$. If $r\in(0,1)$ satisfies
\eqref{Rselect5}
and $u\ge 0$ in $B_r=B_r(x_0)\subset \Omega'$,
then for any $B_\rho=B_\rho(x_0)$ with $0<\rho< r/2$, we have
\[\begin{split}
\fint_{B_\rho}|v-(v)_{B_\rho}|^{p_2}\,dx & \le
  c\rho^{(s_1-s_2)p_1+p_1-p_2} +c\max\{d,d^{-1}\}^{p_4-p_3} \rho^{s_1p_1-s_4p_4} \\
 &\qquad +  \frac{c \rho^{s_1p_1}}{d^{p_2-1}}T (u_-+\|u\|_{L^\infty(B_r)},r,2\rho)
\end{split}\]
for some $c=c(n,s^\pm,p^\pm,\Lambda)>0$, where
\[
v:= \min\{(\ln(a+d)-\ln(u+d))_+,\ln b\} \quad \text{with }\ a,d>0\ \ \text{and}\ \  b>1,
\]
and $s_i$ and $p_i$ ($i=1,2,3,4$) are given in \eqref{si} and \eqref{pi}.
\end{lemma}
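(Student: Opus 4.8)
The statement is a direct consequence of Lemma~\ref{lemlog}, so the plan is to bound $\fint_{B_\rho}|v-(v)_{B_\rho}|^{p_2}\,dx$ by the double integral $\int_{B_\rho}\int_{B_\rho}\bigl|\ln\tfrac{d+u(x)}{d+u(y)}\bigr|^{p_2}K(x,y)\,dx\,dy$ that Lemma~\ref{lemlog} already controls, and then to keep track of the powers of $\rho$ coming from the kernel and from $|B_\rho|$.

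First I would use Jensen's inequality in the form
$\fint_{B_\rho}|v-(v)_{B_\rho}|^{p_2}\,dx \le \frac{1}{|B_\rho|^2}\int_{B_\rho}\int_{B_\rho}|v(x)-v(y)|^{p_2}\,dy\,dx$,
which is legitimate since $v$ is bounded ($0\le v\le\ln b$). The key pointwise observation is that $v$ is obtained from $w:=\ln(u+d)$ by the map $\psi(t):=\min\{(\ln(a+d)-t)_+,\ln b\}$, which is $1$-Lipschitz on $\R$ (being a minimum of a constant and the positive part of an affine function of slope $-1$); since $w(x)-w(y)=\ln\frac{u(x)+d}{u(y)+d}$, this gives $|v(x)-v(y)|\le\bigl|\ln\frac{d+u(x)}{d+u(y)}\bigr|$ for all $x,y\in\R^n$. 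Hence $\int_{B_\rho}\int_{B_\rho}|v(x)-v(y)|^{p_2}\,dy\,dx\le\int_{B_\rho}\int_{B_\rho}\bigl|\ln\frac{d+u(x)}{d+u(y)}\bigr|^{p_2}\,dy\,dx$.

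Next I would insert the kernel: for $x,y\in B_\rho\subset B_{2\rho}$ we have $|x-y|\le 2\rho<1$ (as $\rho<r/2<1/2$) and $s(x,y)p(x,y)\ge s_1p_1$ with $s_1,p_1$ as in \eqref{si}, \eqref{pi}, so the lower bound in \eqref{Lambda} yields $1\le\Lambda\,|x-y|^{n+s(x,y)p(x,y)}K(x,y)\le c(n,s^+,p^+,\Lambda)\,\rho^{n+s_1p_1}K(x,y)$. Plugging this in and using $|B_\rho|\simeq\rho^n$ gives
$\fint_{B_\rho}|v-(v)_{B_\rho}|^{p_2}\,dx\le c\,\rho^{s_1p_1-n}\int_{B_\rho}\int_{B_\rho}\bigl|\ln\frac{d+u(x)}{d+u(y)}\bigr|^{p_2}K(x,y)\,dy\,dx$.
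Finally I would apply Lemma~\ref{lemlog} — whose hypotheses ($u\ge0$ in $B_r$, the condition \eqref{Rselect5} on $r$, and $0<\rho<r/2$) are exactly those assumed here — to the remaining double integral, and multiply its three terms $\rho^{n-s_2p_1+p_1-p_2}$, $\max\{d,d^{-1}\}^{p_4-p_3}\rho^{n-s_4p_4}$, and $d^{-(p_2-1)}\rho^{n}\,T(u_-+\|u\|_{L^\infty(B_r)},r,2\rho)$ by $\rho^{s_1p_1-n}$; this produces precisely the three terms $\rho^{(s_1-s_2)p_1+p_1-p_2}$, $\max\{d,d^{-1}\}^{p_4-p_3}\rho^{s_1p_1-s_4p_4}$, and $d^{-(p_2-1)}\rho^{s_1p_1}\,T(u_-+\|u\|_{L^\infty(B_r)},r,2\rho)$ in the claimed inequality.

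The only step requiring genuine care is the $1$-Lipschitz reduction: one must check that neither the truncation by $\ln b$ nor the positive part spoils the logarithmic difference bound and that the additive constant $\ln(a+d)$ cancels in $v(x)-v(y)$; once this is in place, the rest is elementary bookkeeping of exponents, so I do not expect a serious obstacle.
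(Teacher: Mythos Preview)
Your proof is correct and follows essentially the same route as the paper: reduce the mean oscillation to a double integral via Jensen, use the $1$-Lipschitz property of the truncation map to replace $|v(x)-v(y)|$ by $\bigl|\ln\frac{u(x)+d}{u(y)+d}\bigr|$, insert a factor $\rho^{s_1p_1-n}$ by comparing $|x-y|^{n+s(x,y)p(x,y)}$ with $\rho^{n+s_1p_1}$ through the kernel bound \eqref{Lambda}, and invoke Lemma~\ref{lemlog}. The paper's proof is line-for-line the same argument, only with the Lipschitz observation stated directly as an inequality rather than framed through the map $\psi$.
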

\begin{proof}
We first notice that
\[
|v(x)-v(y)|\le |\ln (u(x)+d)-\ln(u(y)+d)|=\left|\ln\left(\frac{u(x)+d}{u(y)+d}\right)\right|.
\]
From this inequality and H\"older's inequality, we have
\[\begin{split}
\fint_{B_\rho}|v-(v)_{B_\rho}|^{p_2}\,dx & \le \fint_{B_\rho} \fint_{B_\rho}|v(x)-v(y)|^{p_2}\,dy\,dx \\
&\le  c \rho^{s_1p_1-n} \int_{B_\rho}\int_{B_\rho}\frac{|v(x)-v(y)|^{p_2}}{|x-y|^{n+s(x,y)p(x,y)}} \,dy\,dx\\
& \le c \rho^{s_1p_1-n} \int_{B_\rho}\int_{B_\rho}\left|\ln\left(\frac{u(x)+d}{u(y)+d}\right)\right|^{p_2}K(x,y) \,dy\,dx.
\end{split}\]
Therefore, by Lemma~\ref{lemlog}, we have the conclusion.
\end{proof}

\begin{remark}
The estimate in Lemma~\ref{lemcaccio} with the ``$+$"(resp. ``-") sign still holds for subsolutions(resp. supersolutions) $u\in \mathbb W^{s(\cdot,\cdot),p(\cdot,\cdot)}(\Omega)\cap L^\infty(\Omega;L^{p(\cdot,\cdot)-1}_{s(\cdot,\cdot)p(\cdot,\cdot)}(\R^n))$ to \eqref{weakform}. On the other hand, the estimates in Lemmas~\ref{lemlog} and \ref{corlog} with the ``$+$"(resp. ``-") sign still hold for supersolutions $u\in \mathbb W^{s(\cdot,\cdot),p(\cdot,\cdot)}(\Omega)\cap L^\infty(\Omega;L^{p(\cdot,\cdot)-1}_{s(\cdot,\cdot)p(\cdot,\cdot)}(\R^n))$ to \eqref{weakform}.
\end{remark}

\section{\bf Local boundedness}\label{sec5}

In this section, we prove the local boundedness of the weak solutions to \eqref{mainPDE}.

\begin{proof}[{\bf Proof of Theorem~\ref{thm1}}]
Since $\lim_{r\to0}\omega(r)=0$, one can find $r>0$ satisfying that \eqref{Rselect2} and 
\begin{equation}\label{Rselect3}
\omega_{p}(r)\le \min\left\{\frac{s^-}{4},\frac{3s^-(p^-)^2}{4n},\frac{p^-}{4}\right\}.
\end{equation}
Fix any $B_{2r}=B_{2r}(x_0)\Subset \Omega$, and set
\[
s_1:=s^-_{B_{2r}}, \quad s_2:=s^+_{B_{2r}}, \quad  p_1:=p^-_{B_{2r}}, \quad  p_2:=p^+_{B_{2r}}.
\]
Note that if $s_1p_1>n$ then   $u\in W^{\tilde s, p_1}(B_r)\subset L^\infty(B_r)$ by Lemma~\ref{lempxpq} and \cite[Theorem 8.2]{DPV1}, where  $\tilde s\in (0,s)$ is arbitrary satisfying $\tilde s p_1>n$. Therefore, we assume that $s_1 p_1\le n$.
 
We only prove $\underset{B_{r/2}}{\mathrm{ess\, sup}}\, u_+<\infty$ . Then, since $-u$ is also a weak solution to \eqref{weakform}, we also obtain $\underset{B_{r/2}}{\mathrm{ess\, sup}}\, u_-<\infty$.

Define
\[
\sigma := \max\{2s^--1,\tfrac{3}{4}s^-\} \ \ \Big(\Longleftrightarrow \ \ s^--\sigma = \min\{1-s^-, \tfrac{1}{4}s^-\}\Big).
\]
Then we immediately see that  $\tfrac{3}{4}s^- \le \sigma < s^-$.  Moreover, by \eqref{Rselect3}, we have $p_2-p_1\le \frac{3s^-p_2p_1}{4n}$ hence $p_2< (p_1)_{\frac{3}{4}s^-}^*\le (p_1)_{\sigma}^*$. Note that $n-\sigma p_1 >n-s^-p_1\ge 0$ hence $(p_1)_{\sigma}^* := \frac{np_1}{n-\sigma p_1}$ is well defined.

Let $r/2\le r_1<r_2 \le r$, and $w_k:=(u-k)_+$ with $k\ge 0$. Then by H\"older's inequality and Lemma~\ref{lemSPineq}, we have 
\[\begin{split}
& \fint_{B_{r_1}}w_k^{p_2} \,dx   \le \bigg(\frac{|A^+(k,r_1)|}{|B_r|}\bigg)^{1-\frac{p_2}{(p_1)_{\sigma}^*}} \bigg(\fint_{B_{r_1}}w_k^{(p_1)_{\sigma}^*} \,dx\bigg)^{\frac{p_2}{(p_1)_{\sigma}^*}}\\
& \le c\bigg(\frac{|A^+(k,r_1)|}{|B_r|}\bigg)^{1-\frac{p_2}{(p_1)_{\sigma}^*}}  \bigg[r^{\sigma p_1}\fint_{B_{r_1}}\int_{B_{r_1}}\frac{|w_k(x)-w_k(y)|^{p_1}}{|x-y|^{n+\sigma p_1}}\,dy\,dx +\fint_{B_{r_1}}w_k^{p_1}\,dx \bigg]^{\frac{p_2}{p_1}}.
\end{split}\]
We first estimate the double integral on the right hand side. Applying  Lemma~\ref{lempxpq} to $v=w_
k$, $\Omega=B_{r_1}$, $s_1(x,y)\equiv \sigma$, $s_2(x,y)\equiv s(x,y)$, $p_1(x,y)\equiv p_1$, $p_2(x,y)\equiv p(x,y)$, and Lemma~\ref{lemcaccio} to $\rho=r_1$ and $r=r_2$,
\[\begin{split}
&\varrho_{\sigma,p_1}(w_k,B_{r_1}) \le  c \varrho_{s(\cdot,\cdot),p(\cdot,\cdot)}(w_k,B_{r_1})+c|A(k,r_1)|\\
&\ \le c \frac{r_2^{(1-s_2)p_2}}{(r_2-r_1)^{p_2}}\bigg(\int_{B_{r_2}} w_k^{p_2}\,dx+ |A(k,r_2)|\bigg)+ c \frac{r_2^{n+s_2p_2}}{(r_2-r_1)^{n+s_2p_2}} \bigg(  \int_{B_{r_2}}w_k \,dx \bigg) T(w_k,r_1,r_1).
\end{split}\]
Note that when applying Lemma~\ref{lempxpq} we use the facts that $r_1\le 1$ and $d_1\ge s^--\sigma$.
Therefore, from the preceding two estimates and using the facts that  $\frac{r^{(1-s_2)p_2+\sigma p_1}}{(r_2-r_1)^{p_2}}\ge 1$, $w_k\le u_+$ and $r/2\le r_1 \le r$, we have 
\[
\begin{split}
 \fint_{B_{r_1}}w_k^{p_2} \,dx  & \le c\bigg(\frac{|A^+(k,r_2)|}{|B_r|}\bigg)^{1-\frac{p_2}{(p_1)_{\sigma}^*}} \bigg[\frac{r^{(1-s_2)p_2+\sigma p_1}}{(r_2-r_1)^{p_2}}\bigg(\fint_{B_{r_2}} w_k^{p_2} \,dx+\frac{|A(k,r_2)|}{|B_r|}\bigg) \\
&\qquad  \qquad \qquad \qquad \qquad \qquad+  \frac{r^{n+s_2p_2+\sigma p_1}}{(r_2-r_1)^{n+s_2p_2}} \bigg(  \fint_{B_{r_2}}w_k \,dx \bigg) T(u_+,\tfrac r2,r)  \bigg]^{\frac{p_2}{p_1}}.
\end{split}
\]

Let $0< h<k$. Then we notice that for any $\rho\in[\frac{r}{2},r]$,
\[
\frac{|A^+(k,\rho)|}{|B_r|} \le \frac{1}{|B_r|} \int_{A^+(k,\rho)} \frac{w_h^{p_2}}{(k-h)^{p_2}}\,dx \le  \frac{1}{(k-h)^{p_2}} \fint_{B_\rho} w_h^{p_2} \,dx,
\]
\[
\fint_{B_\rho} w_k^{p_2} \,dx \le \fint_{B_\rho} w_h^{p_2} \,dx 
\qquad \text{and}\qquad
\fint_{B_\rho} w_k \,dx \le \frac{1}{(k-h)^{p_2-1}} \fint_{B_\rho} w_h^{p_2} \,dx.
\]
Hence,
\begin{equation}\label{Degiorgi1}\begin{split}
 &\fint_{B_{r_1}}w_k^{p_2} \,dx   \le  \frac{c}{(k-h)^{p_2(1-\frac{p_2}{(p_1)_{\sigma}^*})}} \bigg[\frac{r^{(1-s_2)p_2+\sigma p_1}}{(r_2-r_1)^{p_2}}\left(1+\frac{1}{(k-h)^{p_2}} \right)\\
&\qquad\qquad\qquad\qquad\qquad\qquad +   \frac{r^{n+s_2p_2+\sigma p_1}}{(r_2-r_1)^{n+s_2p_2}}  \frac{T(u_+,\frac r2,r)}{(k-h)^{p_2-1}}  \bigg]^{\frac{p_2}{p_1}} \bigg(\fint_{B_{r_2}} w_h^{p_2} \,dx\bigg)^{1+\sigma_0},
\end{split}\end{equation}
where 
\[
\sigma_0 :=\frac{p_2}{p_1}-\frac{p_2}{(p_1)_{\sigma}^*} =
\frac{\sigma p_2}{n}.
\]

Now we define for $i=0,1,2,\dots$,
\[
k_i :=  M(1-2^{-i})  \ \text{ with }\   M>0,
\quad
\rho_i := (1+2^{-i})\frac{r}{2}, 
\quad 
Y_i:= \fint_{B_{\rho_i}} w_{k_i}^{p_2}\,dx.
\]
By \eqref{Degiorgi1} with $k=k_{i+1}$, $h=k_i$, $r_1=\rho_i$ and $r_2=\rho_{i+1}$, we have
\[\begin{split}
 Y_{i+1}  & \le c \left(\frac{2^i}{M}\right)^{p_2(1-\frac{p_1}{p_2}+\sigma_0)} \bigg[2^{p_2i}r^{\sigma p_1-s_2p_2}\left(1+\frac{2^{p_2i}}{M^{p_2}}\right)\\
&\qquad\qquad\qquad\qquad\qquad + 2^{(n+s_2p_2+p_2-1)i}  \frac{ r^{\sigma p_1} T(u_+,\frac{r}{2},r)}{M^{p_2-1}}\bigg]^{\frac{p_2}{p_1}} Y_i^{1+\sigma_0}.
\end{split}\]
Fixed an arbitrary $\delta\in(0,1]$. We choose  $M>0$ such that
\[
M \geq  M_1:= \delta [ r^{s_2p_2} T(u_+,\tfrac r2,r)]^{\frac{1}{p_2-1}} + \delta^{\frac{p_2-1}{p_2}}.
\] 
Then we have
\[
Y_{i+1} \le c_0 \delta^{-p_2+1}  r^{\sigma p_1-s_2p_2} M^{-p_2(1-\frac{p_2}{p_1}+\sigma_0)} 2^{\alpha i} Y_i^{1+\sigma_0}
\]
for some $c_0,\alpha>0$ depending only on $n$, $s^\pm$, $p^\pm$ and $\Lambda$. Furthermore, if we choose $M>0$ such that
\[
M  \ge M_2:=\left[ c_0^{\frac{1}{\sigma_0}}  \delta^{-\frac{p_2-1}{\sigma_0}} r^{\frac{\sigma p_1-s_2p_2}{\sigma_0}}2^{\frac{\alpha}{\sigma_0^2}}\left(\fint_{B_{r}} u_+^{p_2}\,dx\right)\right]^{\frac{\sigma_0}{p_2(1-\frac{p_2}{p_1}+\sigma_0)}},
\]  
then
\[
Y_0 =\fint_{B_{\frac r2}} u_+^{p_2}\,dx \le  c_0^{-\frac{1}{\sigma_0}}  \delta^{\frac{p_2-1}{\sigma_0}} r^{-\frac{\sigma p_1-s_2p_2}{\sigma_0}}M^{\frac{p_2(1-\frac{p_2}{p_1}+\sigma_0)}{\sigma_0}} 2^{-\frac{\alpha}{\sigma_0^2}}.
\]  
Therefore, by Lemma~\ref{lemseq} we get $\lim_{i\to \infty}Y_i=0$ which implies
\[
\underset{B_{\frac r2}}{\mathrm{ess\, sup}}\, u_+ \le M,
\]
where $M$ can be chosen as  $M=M_1+M_2$ so that
\[\begin{split}
& M \le c \left[\delta^{-\frac{p_2-1}{\sigma_0}} r^{\frac{\sigma p_1-s_2p_2}{\sigma_0}}\left(\fint_{B_{r}} u_+^{p_2}\,dx\right)\right]^{\frac{\sigma_0}{p_2(1-\frac{p_2}{p_1}+\sigma_0)}} + \delta [ r^{s_2p_2} T(u_+,\tfrac{r}{2},r)]^{\frac{1}{p_2-1}} + \delta^{\frac{p_2-1}{p_2}}
\end{split}\]
for some  $c=c(n,s^\pm,p^\pm,\Lambda)>0$.
\end{proof}

\begin{remark}
In the above theorem, we can also obtain the following $L^\infty$-estimate: 
\[\begin{split}
 \|u\|_{L^\infty(B_{r/2})} &\le  c \left[\delta^{-\frac{p_2-1}{\sigma_0}} r^{\frac{\sigma p_1-s_2p_2}{\sigma_0}}\left(\fint_{B_{r}} |u|^{p_2}\,dx\right)\right]^{\frac{\sigma_0}{p_2(1-\frac{p_2}{p_1}+\sigma_0)}}\\
&\qquad\qquad\qquad\qquad + \delta [ r^{s_2p_2} T(|u|,\tfrac{r}{2},r)]^{\frac{1}{p_2-1}} + \delta^{\frac{p_2-1}{p_2}},
\end{split}\]
where $\delta\in(0,1]$ is  arbitrary. (Note that we can also have almost the same estimate in the case  $s_1p_1>n$ by using the same argument in the above proof.)
It looks quite complicated. If we assume that $p(\cdot,\cdot)$ is log-H\"older continuous on the diagonal region $\{(x,x): x\in\Omega\}$, that is, it satisfies \eqref{logholder} without $\omega_s(r)$, then for every $B_{2r}\Subset\Omega$ with $r>0$ satisfying  \eqref{Rselect2}, \eqref{Rselect3} and $\omega_{p}(r)\le \frac{\ln 2}{\ln (\varrho_{s(\cdot,\cdot),p(\cdot,\cdot)}(u,\Omega)+\delta+1)}$ we have
\[
 \|u\|_{L^\infty(B_{r/2})} \le  c \delta^{-\frac{p_2-1}{\sigma_0p_2}} r^{\frac{\sigma-s_2}{\sigma_0}}\left(\fint_{B_{r}} |u|^{p_2}\,dx\right)^{\frac{1}{p_2}} + \delta [ r^{s_2p_2} T(|u|,\tfrac{r}{2},r)]^{\frac{1}{p_2-1}} + \delta^{\frac{p_2-1}{p_2}}.
\]
Moreover, if $s(\cdot,\cdot)\equiv s$ and $p(\cdot,\cdot)\equiv p$ with $sp<n$, then we can choose $\sigma=s$ in the preceding proof and obtain the last estimate without  the term $\delta^{\frac{p_2-1}{p_2}}$, that is, we have 
\[ 
 \|u\|_{L^\infty(B_{r/2})} \le  c \delta^{-\frac{(p-1)n}{sp^2}} \left(\fint_{B_{r}} |u|^{p}\,dx\right)^{\frac{1}{p}} + \delta [ r^{sp} T(|u|,\tfrac{r}{2},r)]^{\frac{1}{p-1}}.
\]
This is exactly the same as the $L^\infty$-estimates for the nonlocal equations with constant powers obtained in \cite[Theorem 1.1]{DKP2} (Note that the definition of the nonlocal tail in \cite{DKP2} is slightly different from the one in this paper).
\end{remark}

\begin{remark}\label{rmksmall}
Instead of the continuity assumption of $p(\cdot,\cdot)$ on the diagonal region in Theorem~\ref{thm1}, we can assume that the oscillation of $p(\cdot,\cdot)$, $\omega_p(r)$, is sufficiently small. Precisely, we may assume that $\omega_p(\cdot)$ satisfies \eqref{Rselect2} and  \eqref{Rselect3} for some $r<\frac{1}{2}$.
\end{remark}

\section{\bf H\"older continuity}\label{sec6}

Finally, we prove the local H\"older continuity of weak solutions to \eqref{mainPDE} by proving  Theorem~\ref{mainthm}. Therefore, we assume that $s(\cdot,\cdot)$ and $p(\cdot,\cdot)$ satisfy \eqref{logholder} in this section. 

Fix any $\Omega'\Subset \Omega$. Note that since we have seen $u\in L^\infty_{\mathrm{loc}}(\Omega)$ in the preceding section, $\|u\|_{L^\infty(\Omega')}<\infty$. Let $\sigma=\sigma(n,\Lambda,s^\pm,p^\pm,c_{LH})\in(0,\tfrac14)$ be a  small positive number that will be determined later in \eqref{sigma1} and \eqref{sigma2}.  Suppose that $r>0$ satisfies  \eqref{Rselect2}, \eqref{Rselect5}, \eqref{Rselect3},
\begin{equation}\label{Rselect6}
r\le  \sigma^{\frac{s^+p^+}{s^-(p^{+}-1)}-1},
\end{equation}
\[
\omega_{s}(r)\le \frac{\ln2}{\ln(1/\sigma)},
\]

\begin{equation}\label{Rselect4}
\omega_{p}(r) \le \min\bigg\{\frac{s^-p^-}{2s^+},(1-s^+)p^-,\frac{\ln2 }{\ln ([v]_{L^\infty(\Omega';L^{p(\cdot,\cdot)-1}_{s(\cdot,\cdot)p(\cdot,\cdot)}(\R^n))}+\|u\|_{L^\infty(\Omega')}+1/\sigma+R_{\Omega'})}\bigg\},
\end{equation}
where $R_{\Omega'}:=\sup\{|x|: x\in \Omega'\}<\infty$.
Note that the above inequalities yield
\begin{equation}\label{sigmap2p1}
\sigma^{-\omega_{s}(r)} \le 2 \quad \text{and}\quad
([v]_{L^\infty(\Omega';L^{p(\cdot,\cdot)-1}_{s(\cdot,\cdot)p(\cdot,\cdot)}(\R^n))}+\|u\|_{L^\infty(\Omega')}+R_{\Omega'}+1/\sigma)^{\omega_p(r)}\le 2 .
\end{equation}

Let $B_r=B_r(x_0)\subset \Omega'$, and  define
\begin{equation}\label{K0}
K_0 :=2 \|u\|_{L^\infty(B_r)}+\left[r^{s_0p_0}T(|u|+\|u\|_{L^\infty(B_r)},x_0,r, r)\right]^{\frac{1}{p^--1}} +1,
\end{equation}
where 
\[
s_0:=s(x_0,x_0)\quad \text{and}\quad p_0:=p(x_0,x_0).
\]
For $j=0,1,2,\dots$, set 
\[
r_j:= \sigma^j\frac{r}{2}, \quad  B_j:= B_{r_j}(x_0), \quad  2B_j:= B_{2r_j}(x_0),
\]
\[
s_{j,1}:=s^-_{2B_j} \quad   s_{j,2}:=s^+_{2B_j}, \quad  p_{j,1}:=p^-_{2B_j} \quad \text{and}\quad  p_{j,2}:=p^+_{2B_j}.
\]
In particular, we write 
\[
 p_1:=p_{0,1}\quad \text{and}\quad p_2:=p_{0,2}.
\]
Note that by \eqref{logholder}
\begin{equation}\label{logholderj}
r_j^{-(s_{j,2}-s_{j,1})}\le r_j^{-\omega_{s}(2r_j)} \le c \quad \text{and} \quad  r_j^{-(p_{j,2}-p_{j,1})}\le r_j^{-\omega_{p}(2r_j)} \le c
\end{equation}
and that by \eqref{T1} and \eqref{sigmap2p1}
\[\begin{split}
T(|u|+\|u\|_{L^\infty(B_r)},x_0,r, r) & \le \left(1+\frac{R_{\Omega'}+1}{r}\right)^{n+s^+p^{+}}[v+\|u\|_{L^\infty(\Omega')}]_{L^\infty(\Omega';L^{p(\cdot,\cdot)-1}_{s(\cdot,\cdot)p(\cdot,\cdot)}(\R^n))}\\
& \le c \left(\frac{R_{\Omega'}+1}{r}\right)^{n+s^+p^{+}}\left([v]_{L^\infty(\Omega';L^{p(\cdot,\cdot)-1}_{s(\cdot,\cdot)p(\cdot,\cdot)}(\R^n))}+ \|u\|_{L^\infty(\Omega')}^{p^+-1}+1\right)\\
\end{split}\]
and so
\begin{equation}\label{K0p2p1}
K_0^{p_2-p_1}\le K_0^{\omega_{p}(r)}  \le    c.
\end{equation}

Now we prove an oscillation decay  estimate.

\begin{lemma}\label{lemholder} Under the above setting, we further suppose that 
\begin{equation}\label{holderbasicass}
s_1p_1<n + \frac{s^-p^-}{4}.
\end{equation} 
Then we have
\begin{equation}\label{Holderinduction}
\theta(r_j):= \sup_{B_j} u-  \inf_{B_j} u  \le K_j:=\sigma^{\alpha j} K_0 \qquad \text{for all }\ j=0,1,2,\dots,
\end{equation}
for some $\alpha=\alpha(n,\Lambda,s^\pm,p^\pm, c_{LH})>0$ satisfying that
\begin{equation}\label{alphaassumption}
\alpha \le \min\left\{\frac{s^-p^{-}}{2(p^{+}-1)},\, \ln_{\sigma}\tfrac12,\, \ln_\sigma \Big(1-\sigma^{\frac{s^+p^{+}}{p^{+}-1}}\Big),s^-\right\} .
\end{equation}
\end{lemma}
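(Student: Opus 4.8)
The plan is to establish \eqref{Holderinduction} by induction on $j$, following the standard De Giorgi oscillation-reduction scheme adapted to the nonlocal variable-power setting, as in \cite{DKP2}. The base case $j=0$ is immediate from the definition \eqref{K0} of $K_0$, since $\theta(r_0)=\sup_{B_0}u-\inf_{B_0}u\le 2\|u\|_{L^\infty(B_r)}\le K_0$. For the inductive step, assume \eqref{Holderinduction} holds up to index $j$; we must show $\theta(r_{j+1})\le \sigma^\alpha K_j$. Passing to $\pm u$ as needed, it suffices to show that one of the two functions $u-\inf_{B_j}u$ or $\sup_{B_j}u-u$, which is nonnegative on $B_j$ and comparable to $K_j$, is in fact bounded below on the smaller ball $2B_{j+1}$ by a fixed fraction of $K_j$ on a set of substantial measure, and then to propagate this to a pointwise lower bound. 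Concretely, setting $v_\pm := \tfrac{2}{K_j}(u - \inf_{B_j}u)$ resp. $\tfrac2{K_j}(\sup_{B_j}u - u)$ rescaled so that $0\le v\le 2$ on $B_j$, we distinguish the two alternatives: either $|\{x\in 2B_{j+1}: v_+ \ge 1\}| \ge \tfrac12|2B_{j+1}|$ or $|\{x\in 2B_{j+1}: v_- \ge 1\}| > \tfrac12|2B_{j+1}|$; in the first case $v_+ \ge 1$ on a large portion of $2B_{j+1}$, and we work with $w:=v_+$.

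The core of the argument is a measure-shrinking lemma obtained from the logarithmic estimate, Lemma~\ref{corlog} (applied with $a=d$ suitably chosen as a small multiple of $K_j$ and $b$ a large power of $2$): since $w \ge 1$ on at least half of $2B_{j+1}$, the function $\ell := \min\{(\ln(a+d)-\ln(w+d))_+, \ln b\}$ has $(\ell)_{B_{j+1}}$ bounded below, so the Poincar\'e-type bound on $\fint_{B_{j+1}}|\ell - (\ell)_{B_{j+1}}|^{p_2}$ forces the set where $w$ is very small to have small relative measure. Iterating the De Giorgi energy estimate (Caccioppoli, Lemma~\ref{lemcaccio}) with truncation levels $k_i = \tfrac12(1 - 2^{-i})$ decreasing toward $\tfrac12$, in combination with the fractional Sobolev--Poincar\'e inequality Lemma~\ref{lemSPineq} (after lowering the differentiability order to $\sigma$ via Lemma~\ref{lempxpq}) and the algebraic iteration Lemma~\ref{lemseq}, one upgrades ``$w$ is small on a set of small measure in $2B_{j+1}$'' to ``$w \ge \tfrac14$ a.e.\ on $B_{j+1}$''. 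This gives $\inf_{B_{j+1}}(u - \inf_{B_j}u) \ge \tfrac18 K_j$ (or the analogous statement for $\sup_{B_j}u - u$), hence $\theta(r_{j+1}) \le (1-\tfrac18)K_j =: \lambda K_j$ with $\lambda<1$; choosing $\alpha$ with $\sigma^\alpha \ge \lambda$ and $\alpha$ small enough to satisfy \eqref{alphaassumption} closes the induction.

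The delicate point — and the reason for all the technical hypotheses \eqref{Rselect2}--\eqref{Rselect4}, \eqref{Rselect6} and conditions \eqref{sigmap2p1}, \eqref{logholderj}, \eqref{K0p2p1}, \eqref{holderbasicass} — is the control of the nonlocal tail terms at every scale $r_j$. In both the Caccioppoli estimate and the logarithmic estimate, the right-hand sides contain quantities like $T(w_\pm + \|u\|_{L^\infty(B_r)}; r_j, \cdot)$ or $T(u_- + \|u\|_{L^\infty(B_r)}; r_j, \cdot)$, and because $\sup p(\cdot,\cdot) - \inf p(\cdot,\cdot)$ on $\R^n\times\R^n$ may be large, one cannot simply replace variable powers by constant ones in the integrals over $\R^n$. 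The strategy is to split the tail integral into the region near $x_0$ (inside $B_r$, where the oscillation of the powers is controlled by \eqref{logholderj} and the induction hypothesis bounds $u$ by a geometric series $\sum_j \sigma^{\alpha j}K_0$, which converges precisely because $\alpha$ satisfies \eqref{alphaassumption}) and the far region (outside $B_r$, absorbed into $T(|u|+\|u\|_{L^\infty(B_r)}; x_0, r, r)$, which is finite by \eqref{taildef}--\eqref{T1} and is already incorporated into $K_0$ via \eqref{K0}). The scaling exponents $r_j^{s_{j,1}p_{j,1} - n}$, $r_j^{s_0 p_0}$, etc., appearing in these estimates must be shown to be harmless under the log-H\"older condition, which is exactly what \eqref{sigmap2p1} and \eqref{K0p2p1} encode: replacing an exponent $p_{j,2}$ by $p_{j,1}$ (or $s_0 p_0$ by $s_{j,2}p_{j,2}$) costs only a power $r_j^{-\omega_p(r_j)}$ or $K_j^{\omega_p(r_j)}$, which is bounded by a constant. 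I expect the bookkeeping of these scale-dependent constants and the verification that the bad tail contributions can be made a small fraction of $K_j$ uniformly in $j$ — by choosing $\sigma$ (hence $r$) small in \eqref{sigma1}, \eqref{sigma2} — to be the main obstacle; the rest is a routine, if lengthy, adaptation of the De Giorgi iteration.
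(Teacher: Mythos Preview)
Your overall strategy is the paper's: induction on $j$, dichotomy on the density of $\{u_j\ge\theta(r_j)/2\}$ in $2B_{j+1}$, a measure-shrinking step via the logarithmic estimate (Lemma~\ref{corlog}), a De Giorgi iteration driven by Lemma~\ref{lemcaccio} to pass from small measure to a pointwise lower bound, and tail control by splitting $\R^n\setminus B_j$ into the annuli $B_{i-1}\setminus B_i$ (where the induction hypothesis bounds $|u_j|$ by $K_{i-1}$) plus the far region $\R^n\setminus B_0$ (absorbed into $K_0$), summing the resulting geometric series. That part is correct, and you correctly locate the main technical difficulty.

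The gap is your claim of a \emph{fixed} improvement factor $\tfrac18$. The logarithmic estimate only bounds $\big|\{u_j\le 2d\}\cap 2B_{j+1}\big|/|2B_{j+1}|$ by $c/k$ with $k\approx\ln\big(\theta(r_j)/d\big)$; to make this ratio small enough to launch the De Giorgi iteration (that is, to verify a condition of the form \eqref{A0ineq}) one is forced to take $d=\epsilon\,\theta(r_j)$ with $\epsilon$ a \emph{power of $\sigma$}, concretely $\epsilon=\sigma^{s_0p_0/(p^+-1)-\alpha}$ as in \eqref{depsilon}, so that $k\gtrsim\ln(1/\sigma)$ and then $\sigma$ is chosen small via \eqref{sigma1}--\eqref{sigma2}. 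The subsequent iteration, with levels $k_i=(1+2^{-i})d$ decreasing to $d$, yields only $u_j\ge d=\epsilon\,\theta(r_j)$ on $B_{j+1}$, hence $\theta(r_{j+1})\le(1-\epsilon)\theta(r_j)$ with a $\sigma$-dependent $\epsilon$, \emph{not} $(1-\tfrac18)K_j$. Closing the induction therefore requires $(1-\epsilon)\le\sigma^\alpha$, which is exactly \eqref{alpha3} and is the origin of the third constraint $\alpha\le\ln_\sigma\big(1-\sigma^{s^+p^+/(p^+-1)}\big)$ in \eqref{alphaassumption}; your sketch does not explain where this constraint comes from. With a fixed level $d\sim K_j/8$, the number $k$ is a fixed constant, the density bound $c/k$ is not small, and the iteration cannot start; so the route to ``$w\ge\tfrac14$'' is blocked.
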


\begin{proof}

\textit{Step 1 (Induction).}
We first observe from \eqref{logholderj} and \eqref{K0p2p1} that
\begin{equation}\label{kjp2p1}
\sup_{x_1x_2,y_1,y_2\in B_j} K_j^{p(x_1,y_1)-p(x_2,y_2)} \le \sigma^{-\alpha j \omega_{p}(r_j)}K_0^{\omega_{p}(r_j)} \le  r_j^{-\omega_{p}(r_j)} K_0^{\omega_{p}(r)}\le c,
\end{equation}
 from \eqref{alphaassumption} that
\begin{equation}\label{alpha1}
  s^-p^{-}-\alpha (p^{+}-1) \ge  \frac{s^-p^{-}}{2},
\end{equation}
\begin{equation}\label{alpha2}
 \sigma^\alpha \ge \frac{1}{2},
\end{equation}
\begin{equation}\label{alpha3}
 \sigma^{\alpha}-1 +\sigma^{\frac{s(x,y)p(x,y)}{p^+-1}-\alpha} > \sigma^{\alpha}-1 +\sigma^{\frac{s^+p^{+}}{p^{+}-1}} \ge 0 , \quad \text{for all }\ x,y\in \R^n.
\end{equation}

We prove the lemma by induction. Since
\[
\sup_{B_0} u-  \inf_{B_0} u  \le 2\|u\|_{L^\infty(B_0)} \le K_0,
\]
 \eqref{Holderinduction} is true when $j=0$. Now assume that 
\begin{equation}\label{inductioni}
\theta(r_i) \le K_i\qquad \text{for all }\ i=0,1,2,\dots,j,
\end{equation}
and then we will prove that 
\begin{equation}\label{inductionj+1}
\theta(r_{j+1}) \le K_{j+1}.
\end{equation}
Hence, from now on, $j\in\mathbb N$ is fixed. Without loss of generality, we shall assume that
\[
\theta(r_{j+1}) \ge \frac{1}{2}K_{j+1}.
\] 
Then this and \eqref{alpha2} imply that
\begin{equation}\label{thetaj}
\theta(r_j)\ge  \theta(r_{j+1}) \ge \frac{1}{2} K_{j+1} = \frac{1}{2}\sigma^\alpha K_{j} \ge \frac{1}{4} K_j.
\end{equation}
We notice that one of the following two cases must holds:
\[
 \frac{|2B_{j+1}\cap\{u \ge \inf_{B_j}u + \theta(r_j)/2\}|}{|2B_{j+1}|} \ge \frac{1}{2},  
\]
\[
 \frac{|2B_{j+1}\cap\{u \le \inf_{B_j}u + \theta(r_j)/2\}|}{|2B_{j+1}|} \ge \frac{1}{2}.
\]
If the first case is true we define $u_j:=u-\inf_{B_j}u$. On the other hand, if the second case is true we define $u_j:=\sup_{B_j}u- u=\theta(r_j)-(u-\inf_{B_j}u)$. Note that in every case,  $u_j\ge 0$ in $B_j$, $u_j$ is also a weak solution to \eqref{mainPDE}, 
\begin{equation}\label{density}
 \frac{|2B_{j+1}\cap\{u_j \ge  \theta(r_j)/2\}|}{|2B_{j+1}|} \ge \frac{1}{2},  
\end{equation}
and 
\begin{equation}\label{supuj}
\sup_{B_i} u_j \le \theta(r_i) \le K_i\quad \text{for all }\ i=0,1,\dots,j. 
\end{equation}

Finally, we define 
\begin{equation}\label{depsilon}
d_j:= \epsilon \theta(r_j)\quad \text{with }\ \epsilon:=\sigma^{\frac{s_0p_0}{p^+-1}-\alpha}\overset{\eqref{alpha1}}{\le}  \sigma^{ \frac{s^-p^-}{2(p^+-1)}}<1.
\end{equation}
Note that by  \eqref{inductioni} with $i=j$, \eqref{thetaj},  \eqref{kjp2p1} and \eqref{sigmap2p1}
\begin{equation}\label{dp2p1}\begin{split}
 \max\{d_j,d_j^{-1}\}^{p_{j,2}-p_{j,1}}  \le  c \epsilon^{-({p_{j,2}-p_{j,1}})}\max\{K_j,K_j^{-1}\}^{p_{j,2}-p_{j,1}}  \le   c \sigma^{-\omega_p(r)\frac{s^+p^{+}}{p^{+}-1}}  \le c.
\end{split}\end{equation}

\textit{Step 2 (Tail estimates).}
We first estimate  $\frac{r_{j+1}^{s_0p_0}}{d_j^{p_0-1}}T(|u_j|+\|u_j\|_{L^\infty(B_j)},r_j,r_j)$. Here, we note that  $\|u_j\|_{L^\infty(B_j)}=\theta(r_j)$. From  
\eqref{supuj}, \eqref{inductioni}, \eqref{depsilon}, \eqref{thetaj} and \eqref{kjp2p1}  with the  the definitions of $u_j$, the tail $T$ and $K_0$ in \eqref{K0}, we have that for every $x\in B_j$,
\[\begin{aligned}
& d_j^{-(p_0-1)} T(|u_j|+\|u_j\|_{L^\infty(B_r)},r_j,r_j)  \\
 & =\frac{1}{d_j^{p_0-1}} \left[\sum_{i=1}^j \int_{B_{i-1}\setminus B_i} \frac{(|u_j(y)|+\theta(r_j))^{p(x,y)-1}}{|y-x_0|^{n+s(x,y)p(x,y)}}\,dy + \int_{\R^n\setminus B_{0}} \frac{(|u_j(y)|+\theta(r_j))^{p(x,y)-1}}{|y-x_0|^{n+s(x,y)p(x,y)}}\,dy\right]\\
 & \le \frac{c}{(\epsilon K_j)^{p_0-1}} \left[\sum_{i=1}^j \int_{B_{i-1}\setminus B_i} \frac{K_{i-1}^{p_0-1+p(x,y)-p_0}}{|y-x_0|^{n+s(x,y)p(x,y)}}\,dy +  \int_{\R^n\setminus B_{0}} \frac{(|u|+\|u\|_{L^\infty(B_r)})^{p(x,y)-1}}{|y-x_0|^{n+s(x,y)p(x,y)}}\,dy\right]\\
 &\le c \epsilon^{-(p_0-1)}\bigg[ \sum_{i=1}^j \sigma^{\alpha(p_0-1)(i-1-j)} \int_{B_{i-1}\setminus B_i} \frac{K_{i-1}^{p(x,y)-p_0}}{|y-x_0|^{n+s_{i-1,2}p_{i-1,2}}}\,dy  \\
 &\qquad\qquad\qquad\qquad\qquad\qquad\qquad + \frac{\sigma^{-\alpha j(p_0-1)} }{K_0^{p^--1}} T(|u|+\|u\|_{L^\infty(B_r)},r,r)\bigg]\\
&\le c\epsilon^{-(p^+-1)} \left( \sum_{i=1}^j  r_i^{-s_{i-1,2}p_{i-1,2}} \sigma^{\alpha(p^+-1)(i-1-j)} +r^{-s_0p_0}\sigma^{-\alpha(p^+-1)j}\right)\\
&\le c\epsilon^{-(p^+-1)} r^{-s_0p_0}\sigma^{-\alpha(p^+-1)}  \sum_{i=1}^j  \sigma^{-s_0p_0i} \sigma^{\alpha(p^+-1)(i-j)}.
\end{aligned}\]
In the last inequality we used the inequality 
\[
r_i^{-s_{i-1,2}p_{i-1,2}}= (\sigma r_{i-1})^{-s_{i-1,2}p_{i-1,2}}\le (\sigma r_{i-1})^{-s_0p_0}=(\sigma^i r)^{-s_0p_0},
\]
which follows from \eqref{sigmap2p1} and \eqref{logholderj}. Therefore, recalling the definition of $\epsilon$ in \eqref{depsilon} and using \eqref{alpha1} and the fact that $\sigma\in(0,\frac14)$, we get
\begin{equation}\label{tailj}\begin{aligned}
\frac{r_{j+1}^{s_0p_0}}{d_j^{p_0-1}}T(|u_j|+\|u_j\|_{L^\infty(B_j)},r_j,r_j)&\le c\epsilon^{-(p^+-1)} \sigma^{s_0p_0-\alpha(p^+-1)} \sum_{i=1}^j   \sigma^{(-s_0p_0+\alpha(p^+-1))(i-j)} \\
& \le c \sum_{i=0}^\infty   4^{-\frac{s^-p^-}{2}}\le c.
\end{aligned}\end{equation}

We next estimate $r_j^{s_0p_{0}} T(\theta(r_j),r_j,r_j)$ in a similar way.  By  \eqref{inductioni} with $i=j$,  \eqref{K0}, \eqref{K0p2p1}, and \eqref{kjp2p1}, we have that  for every $x\in B_j$,
\[\begin{aligned}
& \int_{\R^n\setminus B_{j}}\frac{\theta(r_j)^{p(x,y)-1}}{|y-x_0|^{n+s(x,y)p(x,y)}}\,dy \\
 & = \sum_{i=1}^j \int_{B_{i-1}\setminus B_i} \frac{\theta(r_j)^{p(x,y)-1}}{|y-x_0|^{n+s(x,y)p(x,y)}}\,dy +  \int_{\R^n\setminus B_{0}} \frac{\theta(r_j)^{p(x,y)-1}}{|y-x_0|^{n+s(x,y)p(x,y)}}\,dy\\
&\le c  \sum_{i=1}^j \int_{B_{i-1}\setminus B_i} \frac{K_{i-1}^{p(x,y)-1}}{|y-x_0|^{n+s_{i-1,2}p_{i-1,2}}}\,dy + c  \int_{\R^n\setminus B_{0}} \frac{\|u\|_{L^\infty(B_r)}^{p(x,y)-1}}{|y-x_0|^{n+s(x,y)p(x,y)}}\,dy \\
&\le c  \sum_{i=1}^j  K_{i-1}^{p_0-1} r_i^{-s_{i-1,2}p_{i-1,2}} + c r^{-s_0p_0} K_0^{p^--1}  \le c    \sum_{i=1}^j K_{i-1}^{p_{0}-1} r_i^{-s_0p_{0}},
\end{aligned}\]
hence
\begin{equation}\label{jtail}\begin{split}
r_j^{s_0p_{0}} T(\theta(r_j),r_j,r_j) 
&  \le (\sigma^jr_0)^{s_0p_{0}} \sum_{i=1}^j [\sigma^{(i-1)\alpha} K_0]^{p_{0}-1} (\sigma^ir_0)^{-s_0p_{0}}\\
&  = \sigma^{-\alpha(p_{0}-1)} (\sigma^{\alpha j}  K_0)^{p_0-1}\sum_{i=1}^j \sigma^{(s_0p_{0}-\alpha(p_{0}-1))(j-i)}  \\
 &\overset{\eqref{Holderinduction},\eqref{alpha1}}{\le}   \sigma^{-\alpha(p_0-1)} K_j^{p_0-1}  \sum_{i=0}^\infty 4^{-\frac{s^-p^-}{2}i}\\
  &\overset{\eqref{thetaj}}{\le}  c \sigma^{-\alpha(p_0-1)} \theta(r_j)^{p_{0}-1} .
 \end{split}\end{equation}

\textit{Step 3 (Density estimates).}
Define
\begin{equation}\label{vdef}
v:=\min\left\{\left[\ln\left(\frac{\theta(r_j)/2+d_j}{u_j+d_j}\right)\right]_+, k\right\}, 
\end{equation}
where $d_j$ is give in \eqref{depsilon}, and  a constant $k>0$ will be chosen later in \eqref{kdef}.
Applying Corollary~\ref{corlog} to $a=\theta(r_j)/2$, $b=\mathrm{exp}(k)$, $d=d_j$ and $\rho=2r_{j+1}$ and $r=r_j$ and setting $\tilde p_{j+1} :=\sup_{x,y\in B_{4r_{j+1}}}p(x,y)$, we have  
\[\begin{aligned}
&\fint_{2B_{j+1}}|v-(v)_{2B_{j+1}}|^{\tilde p_{j+1}} \,dx  \\
& \le  c   r_{j+1}^{-\omega_s(4r_{j+1})p^--\omega_p(4r_{j+1})}+c\max\{d,d^{-1}\}^{\omega_p(r_j)} (\sigma r_{j})^{-\omega_p(r_{j})s^--\omega_s(r_{j})p^-} \\
&\qquad+ c r_{j+1}^{-\omega_p(4r_{j+1})s^--\omega_s(4r_{j+1})p^-}\max\{d,d^{-1}\}^{\omega_p(r_j)}  \frac{r_{j+1}^{s_{0}p_{0}}}{d_j^{p_0-1}}T(|u_j|+\|u_j\|_{L^\infty(B_j)},r_j,4r_{j+1}) \\
&\overset{\eqref{sigmap2p1},\eqref{logholderj},\eqref{dp2p1}}{\le} c +c \frac{r_{j+1}^{s_{0}p_{0}}}{d_j^{p_0-1}}T(|u_j|+\|u_j\|_{L^\infty(B_j)},r_j,r_j)  \overset{\eqref{tailj}}{\le} c.
\end{aligned}\] 
In particular, we have 
\begin{equation}\label{L1estimate}
\fint_{2B_{j+1}}|v-(v)_{2B_{j+1}}| \,dx  \le c.
\end{equation}

By the definition of $v$ in \eqref{vdef} and \eqref{density}, we have 
\[
k = \frac{1}{|2B_{j+1}\cap\{u_j\ge \theta(r_j)/2\}|}\int_{2B_{j+1}\cap \{v=0\}} [k-v]\,dx  \le 2\fint_{2B_{j+1}} [k-v]\,dx = 2[k-(v)_{2B_{j+1}}].
\]
This and \eqref{L1estimate} imply 
\[\begin{aligned}
\frac{|2B_{j+1}\cap \{v=k\}|}{|2B_{j+1}|} \le \frac{2}{k|2B_{j+1}|} \int_{2B_{j+1}\cap\{v=k\}}[k-(v)_{2B_{j+1}}]\,dx \le \frac{2}{k} \fint_{2B_{j+1}}|v-(v)_{2B_{j+1}}|\,dx \le \frac{c}{k}.
\end{aligned}\]

Here, we choose
\begin{equation}\label{kdef}
k=\ln\left(\frac{\theta(r_j)/2+\epsilon \theta(r_j)}{3\epsilon\theta(r_j) }\right) = \ln\left(\frac{\theta(r_j)/2+d_j}{2\epsilon\theta(r_j)+d_j }\right),
\end{equation}
and assume that $\sigma\in(0,\frac14)$ satisfies 
\begin{equation}\label{sigma1}
\sigma \le 6^{-\frac{4(p^{+}-1)}{s^-p^{-}}} \quad \left(\ \ \Longleftrightarrow\ \ \ln 6 \le \frac{s^{-}p^{-}}{4(p^{+}-1)}\ln\bigg(\frac{1}{\sigma}\bigg)\ \ \right).
\end{equation}
Then we see that $\{v=k\}=\{u_j\le  2\epsilon\theta(r_j)\}=\{u_j\le  2d\}$ and
\[
k \ge \ln\bigg(\frac{1}{6\epsilon }\bigg) \overset{\eqref{alphaassumption},\eqref{depsilon}}{\ge}  \frac{s^{-}p^{-}}{2(p^{+}-1)} \ln\bigg(\frac{1}{\sigma}\bigg)-\ln 6 
\ge  \frac{s^{-}p^{-}}{4(p^{+}-1)}\ln\bigg(\frac{1}{\sigma}\bigg).
\]
Therefore, combining the above results we have
\begin{equation}\label{densitytildeB}
\frac{|2B_{j+1}\cap \{u_j\le 2d_j\}|}{|2B_{j+1}|} =\frac{|2B_{j+1}\cap \{v=k\}|}{|2B_{j+1}|} \le \frac{c}{k} \le \frac{c_0}{\ln\left(\frac{1}{\sigma}\right)}
\end{equation}
for some $c_0>0$ depending only on $n$, $\Lambda$, $s^\pm$, $p^\pm$, and $c_{LH}$, but independent of $\sigma$. 

\textit{Step 4 (Proof of \eqref{inductionj+1}).} Finally, we  complete the proof by proving \eqref{inductionj+1}.
Set 
\[
\tilde B= 2B_{j+1}, \quad \tilde s_1 = s_{j+1,1},\quad \tilde s_2 = s_{j+1,2}, \quad 
\tilde p_1 = p_{j+1,1}, \quad \tilde p_2=p_{j+1,2}, \quad d=d_j. 
\]

For $i=0,1,2,\dots$, set
\[
\rho_i:=(1+2^{-i}) r_{j+1} , \qquad  B^i:=B_{\rho_i},
\]
\[
k_i:=(1+2^{-i}) d,
\qquad 
 w_i:=(k_i-u_j)_+,
\]
\[
A_i:=\frac{|B^i\cap \{u_j< k_i\}|}{|B^i|}=\frac{|B^i\cap \{w_i>0\}|}{|B^i|}.
\]
Notice that for every $i=0,1,2,\dots$,
\[
r_{j+1}< \rho_{i+1}\le \rho_i\le 2r_{j+1},\quad d \le k_{i+1}  \le k_i \le 2d 
\quad\text{and}\quad 
0\le w_i\le k_i \le 2d.
\]

We will prove that $A_i\to 0$ as $i\to \infty$, by choosing  $\sigma$ sufficiently small.  
Set 
\begin{equation}\label{tdef}
t:=\frac{s^-}{2}. 
\end{equation}
Note that 
\[
n-t\tilde p_1 \ge n-\frac{s_1}{2}p_2  \ge  n-\frac{s_1}{2}p_1- \frac{s^+}{2}\omega_p(r) \overset{\eqref{Rselect4}}{\ge} n-\frac{s_1p_1}{2}- \frac{s_1p_1}{4} \overset{\eqref{holderbasicass}}{>}0.
\]
By Lemma~\ref{lemSPineq} and Lemma~\ref{lempxpq}
\begin{equation}\label{inductionpf1}\begin{aligned}
&A_{i+1}^{\frac{(n-t \tilde p_{1})}{n}} (k_i-k_{i+1})^{\tilde p_{1}} = \left(\frac{1}{|B^{i+1}|}
\int_{B^{i+1}\cap\{u_j<k_{i+1}\}}(k_i-k_{i+1})^{\frac{n\tilde p_{1}}{(n-t  \tilde p_{1})}}\,dx \right)^{\frac{(n-t  \tilde p_{1})}{n}}\\
 & \le c \left(
\fint_{B^{i+1}}w_i^{\frac{n\tilde p_{1}}{(n-t \tilde p_{1})}}\,dx \right)^{\frac{(n-t \tilde p_{1})}{n}}\\
& \le c \frac{\rho_{i+1}^{t \tilde p_{1}}}{|B^{i+1}|}
\varrho_{t,\tilde p_1}(w_i, B^{i+1}) + c\fint_{B^{i+1}} w_i^{\tilde p_{1}}\,dx\\
&\le c\frac{r_{j+1}^{\tilde s_1 \tilde p_{1}}}{|B^{i+1}|} \varrho_{s(\cdot,\cdot), p(\cdot,\cdot)}(w_i, B^{i+1}) +c  \left(r_{j+1}^{\tilde s_1 \tilde p_{1}} + d^{\tilde p_1} \right)A_i \\
&\le c\frac{r_{j+1}^{\tilde s_1 \tilde p_{1}} }{|B^{i+1}|}\varrho_{s(\cdot,\cdot), p(\cdot,\cdot)}(w_i, B^{i+1}) +c d^{p_0} A_i .
\end{aligned}\end{equation}
In the last inequality we used \eqref{dp2p1} and the following estimate:
\[
r_{j+1}^{\tilde s_{1}} = \sigma^{\tilde s_{1}(j+1)}r^{\tilde s_{1}} \le   \sigma^{s^-j}\sigma^{s^-} r^{s^-} \overset{\eqref{Rselect6},\eqref{alphaassumption}}{\le} c\sigma^{\alpha j} \sigma^{\frac{s^+p^+}{p^+-1}} K_0 \overset{\eqref{depsilon},\eqref{thetaj}}{\le} cd.
\]
We then estimate the double integral on the right hand side. Applying the first inequality in \eqref{caccio} to $u=u_j$, $k=k_i$, $\rho=\rho_{i+1}$ and $r=\rho_{i}$,
\[\begin{split}
&\varrho_{s(\cdot,\cdot),p(\cdot,\cdot)}(w_i, B^{i+1})\\ &\le \frac{c}{(\rho_{i}-\rho_{i+1})^{\tilde p_{2}}} \int_{B^{i}} \int_{B^{i}}  \frac{\max\{w_i(x)^{p(x,y)},w_i(y)^{p(x,y)}\}}{|x-y|^{n+s(x,y)p(x,y)-\tilde p_{1}}}\,dy\, dx\\
&\qquad + c \frac{\rho_{i}^{n+\tilde s_2 \tilde p_{2}}}{(\rho_{i}-\rho_{i+1})^{n+\tilde s_2 \tilde p_{2}}} \left(  \int_{B^i}w_i \,dx \right) T(w_i,\rho_{i},\rho_{i})\\
&\overset{\eqref{dp2p1}}{\le} c 2^{ip^{+}}r_{j+1}^{-\tilde p_{2}} d^{p_0} \int_{B^i\cap \{w_i>0\}} \int_{B_{4r_j}(x)}  \frac{1}{|x-y|^{n+\tilde s_2 \tilde p_{2}-\tilde p_{1}}}\,dy\, dx\\
&\qquad + c 2^{i(n+sp^{+})}d|B^i\cap \{w_i>0\}| T(w_i,\rho_i,\rho_i)\\
&\overset{\eqref{sigmap2p1}\eqref{logholderj}}{\le} c 2^{ip^{+}}r_{j+1}^{-\tilde s_1 \tilde p_1}   d^{p_{0}} |B^i\cap \{w_i>0\}| + c 2^{i(n+s^+p^{+})} d|B^i\cap \{w_i>0\}| T(w_i,\rho_i,\rho_i).
\end{split}\]
In the last inequality, we also use the fact that $\tilde s_2 \tilde p_2 -\tilde p_1<0$ by \eqref{Rselect4}.  
Moreover, for every $ x\in B^i\subset B_j$,  
\[\begin{aligned}
& \int_{\R^n\setminus B^i} \frac{|w_i(y)|^{p(x,y)-1}}{|y-x_0|^{n+s(x,y)p(x,y)}}\,dy \\
& \le \int_{B_{j}\setminus B_{j+1}} \frac{|w_i(y)|^{p(x,y)-1}}{|y-x_0|^{n+s(x,y)p(x,y)}}\,dy + \int_{\R^n\setminus B_j} \frac{|w_i(y)|^{p(x,y)-1}}{|y-x_0|^{n+s(x,y)p(x,y)}}\,dy \\
 & \le c \bigg(\sup_{x,y\in B^j}d^{p(x,y)-1}\bigg) \int_{B_{j}\setminus B_{j+1}} \frac{1}{|y-x_0|^{n+s_{j,2} p_{j,2}}}\,dy   + T(\theta(r_j),r_j,r_j)  \\
& \overset{\eqref{dp2p1},\eqref{jtail}}{\le}  c d^{p_0-1}  r_{j+1}^{-s_{j,2}p_{j,2}} + cr_j^{-s_0p_{0}} \sigma^{-\alpha(p_0-1)} \theta(r_j)^{p_{0}-1}  \\
& \overset{\eqref{depsilon}}{\le}   cd^{p_0-1} (r_{j+1}^{-s_{j,2}p_{j,2}}+r_{j+1}^{-s_0 p_{0}}) \overset{\eqref{sigmap2p1}\eqref{logholderj}}{\le} cd^{p_0-1} r_{j+1}^{-\tilde s_1\tilde p_1} .
\end{aligned}
\]
Therefore we have
\[
\varrho_{s(\cdot,\cdot),p(\cdot,\cdot)}(w_i, B^{i+1}) \le  c 2^{i(n+p^{+})} d^{p_0}|B^i\cap \{w_i>0\}| r_{j+1}^{-\tilde s_{1}\tilde p_{1}}.
\]
Inserting this into \eqref{inductionpf1}, we obtain
\[
A_{i+1}^{\frac{(n-t \tilde p_{1})}{n}} (k_i-k_{i+1})^{\tilde p_{1}} \le c 2^{i(n+p^{+})} d^{p_0} A_i.
\]
Moreover, by \eqref{dp2p1} and \eqref{tdef},
\[
A_{i+1}^{1-\frac{s^-p^-}{2n}} 2^{-(i+1)p^{+}}d^{p_{0}}
 \le c A_{i+1}^{\frac{(n-t \tilde p_{1})}{n}} [2^{-i-1}d]^{\tilde p_{1}} = c  A_{i+1}^{\frac{(n-t \tilde p_{1})}{n}} (k_i-k_{i+1})^{\tilde p_{1}}.
\]
The preceding two estimates yield
\[
A_{i+1} \le c_1 2^{i\frac{2n(n+2p^{+})}{2n-s^-p^-}} A_i^{1+\frac{s^-p^-}{2n-s^-p^-}}
\]
for some $c_1>0$ depending only on $n$, $s^\pm$, $p^\pm$, $\Lambda$ and $c_{LH}$, and  independent of $\sigma$. Therefore, by Lemma~\ref{lemseq}
if 
\begin{equation}\label{A0ineq}
A_0=\frac{| 2B_{j+1}\cap \{u_j\le 2d\}|}{|2B_{j+1}|} \le c_1^{-\frac{2n-s^-p^-}{s^-p^-}} 2^{-\frac{2n(n+2p^{+})(2n-s^-p^-)}{(s^-p^-)^2}},
\end{equation}
then $A_i\to 0$ as $i\to \infty$ which implies 
\begin{equation}\label{ujBj+1}
|B_{j+1}\cap \{u_j < d\}| =0  
\quad \Longleftrightarrow\quad 
u_j\ge d= \epsilon \theta(j)\ \ \text{in } B_{j+1}.
\end{equation}
Here, we assume that $\sigma <\frac{1}{4}$ satisfies
\begin{equation}\label{sigma2}
\sigma \le \exp\left(-c_0c_1^{\frac{2n-s^-p^-}{s^-p^-}} 2^{\frac{2n(n+2p^{+})(2n-s^-p^-)}{(s^-p^-)^2}}\right),
\end{equation}
where $c_0$ is in \eqref{densitytildeB}. Then \eqref{densitytildeB}  implies  \eqref{A0ineq} hence we have \eqref{ujBj+1}. 

Consequently, we prove
\[
\theta(r_{j+1}) = \sup_{B_{j+1}} u_j -\inf_{B_{j+1}} u_j  \overset{\eqref{supuj},\eqref{ujBj+1}}{\le} (1-\epsilon)\theta(r_j) 
 \overset{\eqref{inductioni},\eqref{depsilon}}{\le} (1-\sigma^{\frac{s_0p_0}{p^+-1}-\alpha})\sigma^{-\alpha} K_{j+1}
 \overset{\eqref{alpha3}}{\le}  K_{j+1}.
\]
\end{proof}

Finally, we prove  Theorem~\ref{mainthm}. In fact, we almost prove the theorem in the preceding lemma.

\begin{proof}[Proof of Theorem~\ref{mainthm}]
Let $\Omega'\Subset\Omega$. Fix any $B_r\Subset\Omega'$ with $r>0$ satisfying the assumptions stated  in the beginning of the section. Set $\inf_{x,y\in B_r}s(x,y)=s_1$ and $\inf_{x,y\in B_r}p(x,y)=p_1$. If  
\[
n-s_1p_1\le -\frac{s^-p^-}{4},
\] 
then one can find $t\in (0,s_1)$ such that 
\[
n-tp_1\le -\frac{s^-p^-}{8}.
\] 
Then in view of Lemma~\ref{lempxpq} and \cite[Theorem 8.2]{DPV1}, we have $u\in  W^{t,p_1}(B_r)\subset C^{0,\beta}(B_r)$ with $\beta=\frac{tp_1-n}{p_1}\ge \frac{s^-p^-}{8p^+}$ hence  $u\in C^{0,\frac{s^-p^-}{8p^+}}(B_r)$.

On the other hand, if
\[
n-s_1p_1> -\frac{s^-p^-}{4}.
\]  
The preceding Lemma yields $u\in C^{0,\alpha}(B_r)$ for some $\alpha\in (0,1)$ satisfying \eqref{alphaassumption}.  

Therefore by standard covering argument we have that  $u\in C^{0,\alpha}_{\mathrm{loc}}(\Omega')$ for some $\alpha\in(0,1)$ depending on $n$, $s^\pm$, $p^\pm$, $\Lambda$ and $c_{LH}$. Since $\Omega'\Subset\Omega$ is arbitrary, we have the conclusion.
\end{proof}



\bibliographystyle{amsplain}

\end{document}